\newfont{\myfnt}{cmssi10 scaled 1440}
\numberwithin{equation}{section}
\def\ps@nk{\def\@oddhead{\vbox{\hbox to \hsize{\pic \footnotesize \it \shorttitle
				\hfill \rm \thepage} \vspace{1mm} \vspace*{-2mm}}}
	\def\@evenhead{\vbox{\hbox to \hsize{\pic \footnotesize \rm \thepage \hfill \it \shortauthor}
			\vspace{1mm} \vspace*{-2mm}}}
	\def\@oddfoot{} \def\@evenfoot{}}
\def\ps@first{\def\@oddhead{}
	\def\@evenhead{}
	\def\@oddfoot{} \def\@evenfoot{}}
\def\ps@total{\def\@oddhead{\vbox{\hbox to \hsize{\footnotesize \rm \hfill TOTAL\ \ CONTENTS
				\hfill \thepage} \vspace{1mm} \hrule \vspace*{-2mm}}}
	\def\@evenhead{\vbox{\hbox to \hsize{\footnotesize \rm \thepage \hfill CHIN.\ \ ANN.\ \ MATH.
				\hfill} \vspace{1mm} \hrule \vspace*{-2mm}}}
	\def\@oddfoot{} \def\@evenfoot{}}
\newtheoremstyle{thmstyle}
{6pt}
{6pt}
{\it}
{}
{\bf}
{}
{.5em}
{}
\newtheoremstyle{remstyle}
{6pt}
{6pt}
{\rm}
{}
{\bf}
{}
{.5em}
{}
\def\Sec{\@Startsection{section}{1}{\z@}
	{-3.5ex \@plus -1ex \@minus -.2ex}%
	{2.3ex \@plus.2ex}%
	{\normalfont\large\bfseries\boldmath}}
\def\@Startsection#1#2#3#4#5#6{%
	\if@noskipsec \leavevmode \fi
	\par
	\@tempskipa #4\relax
	\@afterindenttrue
	\ifdim \@tempskipa <\z@
	\@tempskipa -\@tempskipa \@afterindentfalse
	\fi
	\if@nobreak
	\everypar{}%
	\else
	\addpenalty\@secpenalty\addvspace\@tempskipa
	\fi
	\@ifstar
	{\@ssect{#3}{#4}{#5}{#6}}%
	{\@dblarg{\@Sect{#1}{#2}{#3}{#4}{#5}{#6}}}}
\def\@Sect#1#2#3#4#5#6[#7]#8{%
	\ifnum #2>\c@secnumdepth
	\let\@svsec\@empty
	\else
	\refstepcounter{#1}%
	\protected@edef\@svsec{\@seccntformat{#1}\relax}%
	\fi
	\@tempskipa #5\relax
	\ifdim \@tempskipa>\z@
	\begingroup
	#6{%
		\@hangfrom{\hskip #3\relax\@svsec \hskip -2.5mm}%
		\interlinepenalty \@M #8\@@par}
	\endgroup
	\csname #1mark\endcsname{#7}%
	\addcontentsline{toc}{#1}{%
		\ifnum #2>\c@secnumdepth \else
		\protect\numberline{\csname the#1\endcsname}%
		\fi
		#7}%
	\else
	\def\@svsechd{%
		#6{\hskip #3\relax
			\@svsec #8}%
		\csname #1mark\endcsname{#7}%
		\addcontentsline{toc}{#1}{%
			\ifnum #2>\c@secnumdepth \else
			\protect\numberline{\csname the#1\endcsname}%
			\fi
			#7}}%
	\fi
	\@xsect{#5}}
\renewenvironment{abstract}{%
	\small
	\quotation
	\noindent {\bfseries \abstractname } }%
{\if@twocolumn\else\endquotation\fi}
\def\Subsec{\@StartSubsection{subsection}{2}{\z@}%
	{-3.25ex\@plus -1ex \@minus -.2ex}%
	{1.5ex \@plus .2ex}%
	{\normalfont\normalsize\bfseries\boldmath}}
\def\@StartSubsection#1#2#3#4#5#6{%
	\if@noskipsec \leavevmode \fi
	\par
	\@tempskipa #4\relax
	\@afterindenttrue
	\ifdim \@tempskipa <\z@
	\@tempskipa -\@tempskipa \@afterindentfalse
	\fi
	\if@nobreak
	\everypar{}%
	\else
	\addpenalty\@secpenalty\addvspace\@tempskipa
	\fi
	\@ifstar
	{\@ssect{#3}{#4}{#5}{#6}}%
	{\@dblarg{\@SubSect{#1}{#2}{#3}{#4}{#5}{#6}}}}
\def\@SubSect#1#2#3#4#5#6[#7]#8{%
	\ifnum #2>\c@secnumdepth
	\let\@svsec\@empty
	\else
	\refstepcounter{#1}%
	\protected@edef\@svsec{\@seccntformat{#1}\relax}%
	\fi
	\@tempskipa #5\relax
	\ifdim \@tempskipa>\z@
	\begingroup
	#6{%
		\@hangfrom{\hskip #3\relax\@svsec\hskip -1.5mm}%
		\interlinepenalty \@M #8\@@par}
	\endgroup
	\csname #1mark\endcsname{#7}%
	\addcontentsline{toc}{#1}{%
		\ifnum #2>\c@secnumdepth \else
		\protect\numberline{\csname the#1\endcsname}%
		\fi
		#7}%
	\else
	\def\@svsechd{%
		#6{\hskip #3\relax
			\@svsec #8}%
		\csname #1mark\endcsname{#7}%
		\addcontentsline{toc}{#1}{%
			\ifnum #2>\c@secnumdepth \else
			\protect\numberline{\csname the#1\endcsname}%
			\fi
			#7}}%
	\fi
	\@xsect{#5}}
\def\list#1#2{\ifnum \@listdepth >5\relax \@toodeep \else \global
	\advance \@listdepth\@ne \fi \rightmargin \z@ \listparindent\z@
	\itemindent\z@ \csname @list\romannumeral\the\@listdepth\endcsname
	\def\@itemlabel{#1}\let\makelabel\@mklab \@nmbrlistfalse #2\relax
	\@trivlist \parskip 0pt \parindent\listparindent \advance \linewidth
	-\rightmargin \advance\linewidth -\leftmargin \advance\@totalleftmargin
	\leftmargin \parshape \@ne \@totalleftmargin \linewidth \ignorespaces}
\renewcommand{\@makecaption}[2]{\begin{center}#1. #2\end{center}}\catcode`@=12 \pagestyle{nk}
\theoremstyle{thmstyle}
\newtheorem{theorem}{\indent Theorem}[section]
\newtheorem{lemma}{\indent Lemma}[section]
\newtheorem{definition}{\indent Definition}[section]
\newtheorem{problem}{\indent Problem}[section]
\theoremstyle{remstyle}
\def\endproof{\hfill $\Box$ \vskip 0.4cm}
\newsavebox{\mygraphic}
\def\pic{\begin{picture}(0,0) \put(-210,-1250){\usebox{\mygraphic}} \end{picture}}
\newfont{\HUGEbf}{cmbx10 scaled 3500}
\definecolor{gray}{rgb}{0.9,0.9,0.9}
\def\thebibliography#1{\section*{\bf \large References}
	\list{[\arabic{enumi}]} {\settowidth \labelwidth{[#1]} \leftmargin
		\labelwidth \advance \leftmargin \labelsep \usecounter{enumi}}
	\def\newblock{\hskip .11em plus .33em minus .07em} \footnotesize \sloppy \clubpenalty
	4000 \widowpenalty 4000 \sfcode`\.=1000 \relax}
\newcommand{\ee}{\tilde{\mathbb{E}}_\tau}
\newcommand{\f}{\mathscr{F}}
\newcommand{\lr}{\mathcal{L}}
\newcommand{\e}{\mathbb{E}}
\newcommand{\br}{\mathbb{R}}
\newcommand{\pr}{\mathcal{P}}
\newcommand{\s}{\mathbb{S}}
\newcommand{\ii}{\mathcal{I}}
\newcommand{\iii}{\mathbb{I}}
\newcommand{\m}{\mathcal{M}}
\newcommand{\argmin}{\mathop{\rm argmin}}
\newcommand{\sr}{\mathcal{S}}
\newcommand{\ba}{\begin{array}}
	\newcommand{\ea}{\end{array}}
\def\firstpage{1}
\def\shorttitle{Mean Field Games with Common Noises and FBSDEs} 
\def\shortauthor{{\it Z. Y. Huang\ \ and\ \ S. J. Tang}} 
\title{\Large \bf \boldmath\ \\ Mean Field Games with Common Noises and Conditional Distribution Dependent FBSDEs$^{\ast}$} 
\author{\large  Ziyu HUANG$^1$\qquad Shanjian TANG$^{2}$} 
\date{}
\begin{document}
	\maketitle
	\thispagestyle{first}
	\renewcommand{\thefootnote}{\fnsymbol{footnote}}
	\footnotetext{\hspace*{-5mm} \begin{tabular}{@{}r@{}p{13.4cm}@{}}
			$^1$ & School of Mathematical Sciences, Fudan University,
			Shanghai 200433, China.\\
			&{E-mail: 19110180044@fudan.edu.cn} \\
			$^{2}$ & Department of Finance and Control Sciences, School of Mathematical Sciences, Fudan University,
			Shanghai 200433, China. {E-mail: sjtang@fudan.edu.cn} \\
			$^{\ast}$ & This work was partially supported by National Key R\&D Program of China under grant 2018YFA0703900 and by National Natural Science Foundation of China under grants 11631004 and 12031009. 
	\end{tabular}}
	\renewcommand{\thefootnote}{\arabic{footnote}}
	
	\begin{abstract}
		In this paper, we consider the mean field game with a common noise and allow the state coefficients to vary with the conditional distribution in a nonlinear way. We assume that the cost function satisfies a convexity and a weak monotonicity property. We use the sufficient Pontryagin principle for optimality to transform the mean field control problem into existence and uniqueness of solution of conditional distribution dependent forward-backward stochastic differential equation (FBSDE). We prove the existence and uniqueness of solution of the conditional distribution dependent FBSDE when the dependence of the state on the conditional distribution is sufficiently small, or when the convexity parameter of the running cost on the control is sufficiently large. Two different methods are developed. The first method is based on a continuation of the coefficients, which is developed for FBSDE by Hu and Peng \cite{YH2}. We apply the method to conditional distribution dependent FBSDE. The second method is to show the existence result on a small time interval by Banach fixed point theorem and then extend the local solution to the whole time interval.
		
		\vskip 4.5mm
		
		\noindent \begin{tabular}{@{}l@{ }p{10.1cm}} {\bf Keywords } &
			mean field games, common noises, FBSDEs, stochastic maximum principle
		\end{tabular}
		
		\noindent {\bf 2000 MR Subject Classification } 93E20, 60H30, 60H10, 49N70, 49J99
	\end{abstract}
	
	\baselineskip 14pt
	
	\setlength{\parindent}{1.5em}
	
	\setcounter{section}{0}
	
	\section{Introduction}
	Mean field games (MFGs) were proposed by Lasry and Lions in a serie of papers \cite{JM1,JM2,JM3} and also independently by Huang, Caines and Malhamé \cite{MH}, under the different name of Nash Certainty Equivalence. They are sometimes approached by symmetric, non-cooperative stochastic differential games of interacting $N$ players. To be specific, each player solves a stochastic control problem with the cost and the state dynamics depending not only on his own state and control but also on other players' states. The interaction among the players can be weak in the sense that one player is influenced by the other players only through the empirical distribution. In view of the theory of McKean–Vlasov limits and propagation of chaos for uncontrolled weakly interacting particle systems \cite{AS}, it is expected to have a convergence for $N$-player game Nash equilibria by assuming independence of the random noise in the players' state processes and some symmetry conditions of the players. The literature in this area is huge. See \cite{PC} for a summary of a series of Lions' lectures given at the Collége de France. Carmona and Delarue approached the MFG problem from a probabilistic point of view. See \cite{CR,PA,CR3}. There are rigorous results about construction of $\epsilon$-Nash equilibria for $N$-player games. See for example \cite{DL,MH1,CR4,PA,VN}.
	
	In most studies mentioned above, the noises in each player's state dynamic are assumed to be independent and the empirical distribution of players' states is deterministic in the limit. See \cite{CR5} on a model of inter-bank borrowing and lending, where noises of players are dependent.
	
	The presence of a common noise clearly adds extra complexity to the problem as the empirical distribution of players' state becomes stochastical in the limit. Following a PDE approach, Pham and Wei \cite{HP3} studied the dynamic programming for optimal control of stochastic McKean-Vlasov dynamics; in particular, Pham \cite{HP1} solved the optimal control problem for a linear conditional McKean-Vlasov equation with a quadratic cost functional. Carmona and Delarue \cite{PA} consider the mean field game without common noises. They use a probabilistic approach based on the stochastic maximum principle (SMP) within a linear-convex framework. Nonetheless, their arguments of using Schauder fixed-point theorem to a compact subset of deterministic flows of probability measures, is difficult to be adapted to the case of common noises. Yu and Tang \cite{JY} considered mean field games with degenerate state- and distribution-dependent noise. Ahuja \cite{SA} studied a  simple linear model of the mean field games in the presence of common noise with the terminal cost be convex and weakly monotone. The statistics of the state process occurs in the McKean-Vlasov forward-backward stochastic differential equation (FBSDE) arising from the stochastic maximum principle as the distribution conditioned on the common noise. Ahuja et al.  \cite{SAWR} further consider a system of FBSDEs with monotone functionals and then solve the mean field game with a common noise within a linear-convex setting for weakly monotone cost functions. However, their state dynamics do not depend on the statistics of the state. The monotone condition usually fails to hold for the conditional distribution dependent FBSDE  if the state dynamic depends on the conditional distribution of the state. 
	
	In this paper, we consider the mean field game with a common noise and allow the state coefficients to vary with the conditional distribution in a nonlinear way. We use the sufficient Pontryagin principle for optimality to transform the mean field control problem into existence and uniqueness of solution of conditional distribution dependent FBSDE. We prove the existence and uniqueness of solution of the conditional distribution dependent FBSDE when the dependence of the state coefficient on the conditional distribution is sufficiently small, or when the convexity parameter of the running cost on the control is sufficiently large. To accomplish this, we assume that the terminal cost and the running cost are convex and weakly monotone. We develop two different methods to show the existence and uniqueness result.
	
	The first method is based on a continuation of the coefficients, which is developed for FBSDE by Hu and Peng \cite{YH2}. With this method, Carmona and Delarue \cite{CR} solve a linear case without common noises 
	and Ahuja et al. \cite{SAWR} solve that mean field games with common noises within a linear-convex setting when the state dynamic is independent of the conditional distribution of state. 
	
	The second method, inspired by \cite{SA}, is first to show the existence result on a small  time interval by Banach fixed point theorem and then to extend the local solution to the whole time. Ahuja \cite{SA} showed the existence and uniqueness result for the particular MFG with common noises for the linear state $dX_t=\alpha_tdt+\sigma dW_t+\tilde{\sigma}d\tilde{W}_t$, where $\alpha_t$ is the control, $(W,\tilde{W})$ is a two-dimensional standard Brownian motion and $(\sigma,\tilde{\sigma})$ is constant. We shall consider a more general model. All the coefficients of our state equation are allowed to depend on the control, the state and the conditional distribution of state. More assumptions in the second method are required to derive the existence result, while the probabilistic properties as well as the sensitivity of the FBSDEs have their own interests.
	
	The paper is organized as follows. In Section~\ref{problem_formulation}, we introduce our model and formulate the main problem. In Section~\ref{SMP}, we use the sufficient Pontryagin principle for optimality to transform the control problem into an existence and uniqueness problem of a conditional distribution dependent FBSDE. The existence and uniqueness result of the conditional distribution dependent FBSDE is stated and proved with different methods in Sections~\ref{M1} and \ref{M2}. Appendix contains the proofs of main lemmas given in Sections~\ref{M1} and \ref{M2}.
	
	\section{Problem Formulation}\label{problem_formulation}
	In this section, we describe our stochastic differential game model, and then formulate the limit problem of the $N$-player game as a MFG with a common noise.
	
	\subsection{Notations}
	Let $(\Omega,\f,\{\f_t,0\le t\le T\},\mathbb{P})$ denote a complete filtered probability space augmented by all the $\mathbb{P}$-null sets on which a one-dimensional Brownian motion $\{W_t,{0\le t\le T}\}$ is defined.  Let $\tilde{\f}=\{\tilde{\f}_t,0\le t\le T\}$ be a subfiltration of $\f$. $\lr(\cdot|\tilde{\f}_t)$ is the law conditioned at $\tilde{\mathscr{F}}_t$ for $t\in[0,T]$.
	
	Let $\lr^2_{\f_t}$ denote the set of all $\f_t$-measurable square-integrable $\br$-valued random variables. Let $\lr^2_{\f}(0,T)$ denote the set of all $\f_t$-progressively-measurable $\br$-valued processes $\alpha=(\alpha_t)_{0\le t\le T}$ such that 
	\begin{equation*}
		\e\big[\int_0^T |\alpha_t|^2dt\big]<+\infty.
	\end{equation*}
	Let $\mathcal{S}^2_{\f}(0,T)$ denote the set of all $\f_t$-progressively-measurable $\br$-valued processes $\beta=(\beta_t)_{0\le t\le T}$ such that 
	\begin{equation*}
		\e[\sup_{0\le t\le T} |\beta_t|^2dt]<+\infty.
	\end{equation*}
	We define similarly the spaces $\lr_{\f}(s,t)$ and $\mathcal{S}^2_{\f}(s,t)$ for any $0\le s<t\le T$.
	
	Let $\pr(\br)$ denote the space of all Borel probability measures on $\br$, and $\pr_2(\br)$ the space of all probability measures $m\in\pr(\br)$ such that 
	\begin{equation*}
		\int x^2dm(x)<\infty.
	\end{equation*}
	The Wasserstein distance is defined on $\pr_2(\br)$ by
	\begin{equation*}
		W_2(m_1,m_2)=\big(\inf_{\gamma\in\Gamma(m_1,m_2)}\int_{\br^2}|x(\omega_1)-x(\omega_2)|^2 d\gamma(\omega_1,\omega_2)\big)^{\frac{1}{2}},\quad m_1,m_2\in\pr_2(\br),
	\end{equation*}
	where $\Gamma(m_1,m_2)$ denotes the collection of all probability measures on $\br^2$ with marginals $m_1$ and $m_2$. The space $(\pr_2(\br),W_2)$ is a complete separable metric space.  Let $\mathcal{M}_2(C[0,T])$ denote the space of all probability measures $m$ on $C[0,T]$ such that
	\begin{equation*}
		\m_2(m):=\int\sup_{0\le t\le T}|x(t)|^2dm(x)<\infty.
	\end{equation*}
	The measure on it is defined by
	\begin{equation*}
		\begin{split}
			D_2(m_1,m_2)=\big(\inf_{\gamma\in\Gamma(m_1,m_2)}\int_{\br^2} \sup_{0\le t\le T} |x(t,\omega_1)-x(t,\omega_2)|^2 d\gamma(\omega_1,\omega_2)\big)^{\frac{1}{2}},\quad m_1,m_2\in\mathcal{M}_2(C[0,T]).
		\end{split}
	\end{equation*}
	The space $(\mathcal{M}_2(C[0,T]),D_2)$ is a complete separable metric space.
	
	\subsection{$N$-player Stochastic Differential Games}
	Let $T>0$ be a fixed terminal time, $\tilde{W}=\{\tilde{W}_t,0\le t\le T\}$ and $W^i=\{W^i_t,0\le t\le T\}$, $i=1,2,\dots,N$ are one-dimensional independent Brownian motions defined on a complete probability space $(\Omega,\mathbb{P})$ satisfying the usual conditions. Consider a stochastic dynamic game of $N$ players. The $i$-th player regulates his/her own state process $X_t^i$ in $\br$ governed by
	\begin{equation*}
		\left\{
		\begin{aligned}
			&dX_t^i=b(t,X_t^i,u_t^i,m_t^N)dt+\sigma(t,X_t^i,u_t^i,m_t^N)dW_t^i+\tilde{\sigma}(t,X_t^i,u_t^i,m_t^N)d\tilde{W}_t, \quad t\in(0,T],\quad 1\le i\le N;\\
			&X_0^i=\xi_0^i,
		\end{aligned}
		\right.
	\end{equation*}
	via the control process $u^i=\{u_t^i,0\le t\le T\}\in\lr_{\f^i}^2(0,T)$, where
	\begin{equation*}
		b,\sigma,\tilde{\sigma}:[0,T]\times\br\times\br\times\pr_2(\br)\to\br,
	\end{equation*}
	$\mathscr{F}^i$ is the natural filtration of $(\xi_0^i,W^i,\tilde{W})$, and $m_t^N$ is the empirical distribution of $\{X_t^i,1\le i\le N\}$, i.e.
	\begin{equation*}
		m_t^N=\frac{1}{N}\sum_{i=1}^N\delta_{X_t^i}(dx),\quad t\in[0,T].
	\end{equation*}
	We assume that $\{\xi^i_0,1\le i\le N\}$ are independent and identically distributed, independent of all Brownian motions and satisfy $\e[|\xi_0^i|^2]<\infty$ for all $1\le i\le N$. We call $\tilde{W}$ a common noise and $W^i$ an individual noise of the $i$-th player.
	
	Given the other players' strategies, the $i$-th player selects a control $u^i\in\lr_{\f^i}^2(0,T)$ to minimize his/her expected cost 
	\begin{equation*}
		J^i(u^i|(u^j)_{j\neq i}):=\mathbb{E}\big[\int_0^T f(t,X_t^i,u_t^i,m_t^N)dt+g(X^i_T,m_T^N)\big]
	\end{equation*}
	where $(u^j)_{j\neq i}$ denotes a strategy profile of other players excluding the $i$-th player, and 
	\begin{equation*}
		\begin{split}
			&f:[0,T]\times\br\times\br\times\pr_2(\br)\to\br,\quad g:\br\times\pr_2(\br)\to\br,
		\end{split}
	\end{equation*}
	are assumed to be identical for all players.
	
	Note that the strategies of other players have an effect on the $i$-th player through $m_t^N$, which is the main feature that makes this set up a game. We are seeking a type of equilibrium solution widely used in game theory setting called Nash equilibrium.
	\begin{definition}
		A set of strategies $(u^i)_{1\le i\le N}$ is a Nash Equilibrium if $u^i$ is optimal for the $i$-th player given the other players' strategies $(u^j)_{j\neq i}$. In other words,
		\begin{equation*}
			J^i(u^i|(u^j)_{j\neq i})=\min_{u\in\lr_{\f^i}^2(0,T)}J^i(u|(u^j)_{j\neq i}), \quad 1\le i\le N.
		\end{equation*}
	\end{definition}
	Solving for a Nash equilibrium of an $N$-player game is impractical when $N$ is large due to the curse of dimensionality. So we formally take the limit as $N\to\infty$ and consider the limit problem instead.
	
	\subsection{Formulation of the Problem}
	We now formulate the MFG with a common noise by taking the limit of $N$-player stochastic differential games as $N\to\infty$. When considering the limiting problem, we assume that each player adopts the same strategy. Therefore, the players' distribution can be represented by a conditional law of a single representative player given a common noise. In other words, we formulate the MFG with a common noise as a stochastic control problem for a single player with an equilibrium condition involving a conditional law of the state process given a common noise.
	
	Let $T>0$ be a fixed terminal time, $W=\{W_t,0\le t\le T\}$ and $\tilde{W}=\{\tilde{W}_t,0\le t\le T\}$ be one-dimensional independent Brownian motions defined on a complete probability space $(\Omega,\mathbb{P})$ satisfying the usual conditions. Let $\xi_0$ be a square-integrable random variable. We assume that $\f=\{\f_t,0\le t\le T\}$ is the natural filtration of $(\xi_0,W,\tilde{W})$ and $\tilde{\f}=\{\tilde{\f}_t,0\le t\le T\}$ is the natural filtration of $\tilde{W}$. Both of them are augmented by all the $\mathbb{P}$-null sets. The problem of MFG with a common noise is defined as follows:
	
	\begin{problem}\label{def1}
		For given measurable functions $b,\sigma,\tilde{\sigma},f:[0,T]\times\br\times\br\times\pr_2(\br)\to\br$ and $g:\br\times\pr_2(\br)\to\br$, find an optimal control $\hat{u}\in \mathcal{L}_{\mathscr{F}}^2(0,T)$ for the stochastic control problem
		\begin{equation*}
			\left\{
			\begin{aligned}
				&\hat{u}\in\argmin_{u\in\mathcal{L}_{\mathscr{F}}^2(0,T)}J(u|m):=\e\big[\int_0^T f(t,X_t^u,u_t,m_t)dt+g(X^u_T,m_T)\big];\\
				&X_t^u=\xi_0+\int_0^tb(s,X_s^u,u_s,m_s)ds+\int_0^t\sigma(s,X_s^u,u_s,m_s)dW_s\\
				&\qquad\quad+\int_0^t\tilde{\sigma}(s,X_s^u,u_s,m_s)d\tilde{W}_s,\quad t\in[0,T];\\
				&m_t=\mathcal{L}(X_t^{\hat{u}}|\tilde{\mathscr{F}}_t),\quad \xi_0\in\lr_{\f_0}^2.
			\end{aligned}
			\right.
		\end{equation*}
	\end{problem}
	
	\section{Stochastic Maximum Principle}\label{SMP}
	In this section, we discuss the stochastic maximum principle for MFG with a common noise. The stochastic maximum principle gives optimality conditions satisfied by an optimal control. It gives sufficient and necessary conditions for the existence of an optimal control in terms of solvability of the adjoint process as a backward stochastic differential equation (BSDE). For more details about stochastic maximum principle, we refer to \cite{HP} or \cite{JYXY}. In our case, Problem~\ref{def1} is associated to a conditional distribution dependent FBSDE with the help of the sufficient Pontryagin principle for optimality.
	
	We begin with discussing the stochastic maximum principle given an $\tilde{\f}_t$-progressively-measurable stochastic flow of probability measures $m=\{m_t,0\le t\le T\}\in\mathcal{M}_2(C[0,T])$. We define the generalized Hamiltonian
	\begin{equation*}
		\begin{split}
			H(t,x,p,q,\tilde{q},u,m):=b(t,x,u,m)p+\sigma(t,x,u,m)q+\tilde{\sigma}(t,x,u,m)\tilde{q}+f(t,x,u,m),\\
			(t,x,p,q,\tilde{q},u,m)\in [0,T]\times\mathbb{R}^5\times\pr_2(\br).
		\end{split}
	\end{equation*}
	Now we state the first set of assumptions to ensure that the stochastic control problem is uniquely solvable given $m$. For notational convenience, we use the same constant $L$ for all the conditions below.
	
	\textbf{(H1)} The drift $b$ and the volatility $\sigma,\tilde{\sigma}$ are linear in $x$ and $u$. They read
	\begin{equation*}
		\phi(t,x,u,m)=\phi_0(t,m)+\phi_1(t)x+\phi_2(t)u,\quad \phi=b,\sigma,\tilde{\sigma},\quad \phi_i=b_i,\sigma_i,\tilde{\sigma}_i,
	\end{equation*}
	for some measurable deterministic functions $\phi_0:[0,T]\times\pr_2(\br)\to\br$ satisfing the following linear growth:
	\begin{equation*}
		|\phi_0(t,m)|\le L\big(1+\big(\int_{\br}|x|^2dm(x)\big)^{\frac{1}{2}}\big),
	\end{equation*} 
	and $\phi_1,\phi_2:[0,T]\to\br$ being bounded by a positive constant $L$. Further, $(\sigma_2, \tilde{\sigma}_2)$ is bounded by a positive constant $B_u$. For notational convenience, we can assume that $B_u\le L$ by setting $L=\max\{L, B_u\}$.
	
	\textbf{(H2)} The function $f(t,0,0,m)$ satisfies a quadratic growth condition in $m$. The function $f(t,\cdot,\cdot,m):\br\times\br\to\br$ is differentiable for all $(t,m)\in[0,T]\times\pr_2(\br)$, with the derivatives $(f_x,f_u)(t,x,u,m)$ satisfying a linear growth in $(x,u,m)$. Similarly, the function $g(0,m)$ satisfies a quadratic growth condition in $m$. The function $g(\cdot,m):\br\to\br$ is differentiable for all $m\in\pr_2(\br)$, with the derivative $g_x(x,m)$ satisfying a linear growth in $(x,m)$. That is, 
	\begin{align*}
		&\max\{|f(t,0,0,m)|,|g(0,m)|\}\le L\big(1+\int_{\br}|x|^2dm(x)\big),\quad m\in\pr_2(\br);\\
		&\max\{|f_x(t,x,u,m)|,|f_u(t,x,u,m)|,|g_x(x,m)|\}\le L\big(1+|x|+|u|+\big(\int_{\br}|x|^2dm(x)\big)^{\frac{1}{2}}\big),\\
		&\qquad\qquad\qquad\qquad\qquad\qquad\qquad\qquad\qquad\qquad\quad(t,x,u,m)\in[0,T]\times\br\times\br\times\pr_2(\br).
	\end{align*}
	
	\textbf{(H3)} The function $f$ is of the form 
	\begin{equation*}
		f(t,x,u,m)=f_0(t,x,u)+f_1(t,x,m),\quad (t,x,u,m)\in[0,T]\times\br\times\br\times\pr_2(\br).
	\end{equation*}
	The function $f_0$ is differentiable with respect to $(x,u)$ and the function $f_1$ is differentiable with respect to $x$. The derivatives $(f_{0x},f_{0u})(t,\cdot,\cdot):\br\times\br\to\br\times\br$ are $L$-Lipschitz continuous uniformly in $t\in[0,T]$. The derivative $f_{1x}(t,\cdot,m):\br\to\br$ is $L$-Lipschitz continuous uniformly in $(t,m)\in[0,T]\times\pr_2(\br)$. The derivative $g_{x}(\cdot,m):\br\to\br$ is $L$-Lipschitz continuous uniformly in $m\in\pr_2(\br)$. 
	
	\textbf{(H4)} The functions $f_1(t,\cdot,m)$ and $g(\cdot,m)$ are convex for all $(t,m)\in[0,T]\times\pr_2(\br)$, in such a way that
	\begin{equation*}
		\begin{split}
			&(f_{1x}(t,x',m)-f_{1x}(t,x,m))(x'-x)\geq0,\quad t\in[0,T],\quad x,x'\in\br,\quad m\in\pr_2(\br);\\
			&(g_x(x',m)-g_x(x,m))(x'-x)\geq0,\quad x,x'\in\br,\quad m\in\pr_2(\br).
		\end{split}
	\end{equation*}
	The function $f_0(t,x,u)$ is jointly convex in $(x,u)$ with a strict convexity in $u$ for all $t\in[0,T]$, in such a way that, for some $C_f>0$,
	\begin{equation*}
		\begin{split}
			f_0(t,x',u')-f_0(t,x,u)-(f_{0x},f_{0u})(t,x,u)\cdot(x'-x,u'-u)\geq C_f|u'-u|^2, \\
			t\in[0,T], \quad x,x',u,u'\in\br.
		\end{split}
	\end{equation*}
	
	The linear growth condition (H2) and Lipschitz condition (H3) are standard assumptions to ensure the existence of a strong solution. The linear-convex conditions (H1) and (H4) ensure that the Hamiltonian is strictly convex, so that there is a unique minimizer in the feedback form. The separability condition in (H3) ensures that the feedback control is independent of $m$. The following result is borrowed from \cite[Lemma 1]{PA}.
	
	\begin{lemma}\label{lemma:u}
		Under assumptions (H1)-(H4), given $m\in\pr_2(\br)$, for all $(t,x,p,q,\tilde{q})\in [0,T]\times \mathbb{R}\times\mathbb{R}\times\mathbb{R}\times\mathbb{R}$, there exists a unique minimizer $\hat{u}(t,x,p,q,\tilde{q})$ of the Hamiltonian. Moreover, the map $(t,x,p,q,\tilde{q})\mapsto \hat{u}(t,x,p,q,\tilde{q})$ is measurable, locally bounded and $L(2C_f)^{-1}$-Lipschitz continuous in $(x,p)$ and $B_u(2C_f)^{-1}$-Lipschitz continuous in $(q,\tilde{q})$, uniformly in $t\in [0,T]$. 
	\end{lemma}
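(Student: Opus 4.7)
The plan is to exploit that under (H1)--(H4) the Hamiltonian depends on $u$ only through the linear term $(b_2(t)p+\sigma_2(t)q+\tilde\sigma_2(t)\tilde q)u$ coming from the affine structure of $b,\sigma,\tilde\sigma$, together with $f_0(t,x,u)$ from the decomposition in (H3); all the remaining contributions are $u$-independent. Since (H4) forces $u\mapsto f_0(t,x,u)$ to be $C_f$-strongly convex, the map $u\mapsto H(t,x,p,q,\tilde q,u,m)$ inherits $C_f$-strong convexity. Combined with the continuity that follows from (H2)--(H3), this gives coercivity and hence a unique minimizer $\hat u(t,x,p,q,\tilde q)$, characterized by the first-order condition
\begin{equation*}
b_2(t)p+\sigma_2(t)q+\tilde\sigma_2(t)\tilde q+f_{0u}(t,x,\hat u)=0.
\end{equation*}
Measurability of $\hat u$ follows from a standard measurable selection argument for a jointly measurable, continuous, strongly convex functional with a unique minimizer (or equivalently, by applying the implicit function theorem to the strictly monotone equation above). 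Local boundedness follows by using this identity together with the linear growth of $f_{0u}$ from (H2): on a bounded set in $(t,x,p,q,\tilde q)$ the first three terms are controlled, forcing $|f_{0u}(t,x,\hat u)|$ and hence $|\hat u|$ to be bounded.

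For the Lipschitz bounds I would compare two minimizers $\hat u=\hat u(t,x,p,q,\tilde q)$ and $\hat u'=\hat u(t,x',p',q',\tilde q')$ at the same time $t$. Subtracting their first-order conditions gives
\begin{equation*}
f_{0u}(t,x,\hat u)-f_{0u}(t,x',\hat u')=-b_2(t)(p-p')-\sigma_2(t)(q-q')-\tilde\sigma_2(t)(\tilde q-\tilde q').
\end{equation*}
Splitting the left-hand side as $[f_{0u}(t,x,\hat u)-f_{0u}(t,x,\hat u')]+[f_{0u}(t,x,\hat u')-f_{0u}(t,x',\hat u')]$, multiplying both sides by $\hat u-\hat u'$, and combining (i) the strong-monotonicity bound $(f_{0u}(t,x,\hat u)-f_{0u}(t,x,\hat u'))(\hat u-\hat u')\ge 2C_f|\hat u-\hat u'|^2$ coming from $C_f$-strong convexity of $f_0(t,x,\cdot)$, (ii) the estimate $|f_{0u}(t,x,\hat u')-f_{0u}(t,x',\hat u')|\le L|x-x'|$ from the Lipschitz continuity of $f_{0u}$ in $x$ in (H3), and (iii) the bounds $|b_2(t)|\le L$ and $|\sigma_2(t)|,|\tilde\sigma_2(t)|\le B_u$ from (H1), I obtain
\begin{equation*}
2C_f|\hat u-\hat u'|\le L(|x-x'|+|p-p'|)+B_u(|q-q'|+|\tilde q-\tilde q'|),
\end{equation*}
which yields the two stated Lipschitz constants $L(2C_f)^{-1}$ in $(x,p)$ and $B_u(2C_f)^{-1}$ in $(q,\tilde q)$.

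The main obstacle is conceptual rather than technical: one must recognize that the asymmetric role of $(x,p)$ versus $(q,\tilde q)$ in the two Lipschitz constants is a direct consequence of the different bounds postulated in (H1) ($L$ for $b_2$ and the strictly smaller $B_u$ for $\sigma_2,\tilde\sigma_2$), and that the $x$-dependence gets absorbed purely by the Lipschitz continuity of $f_{0u}$ in $x$ without ever involving the $x$-Lipschitz behaviour of $b,\sigma,\tilde\sigma$ themselves. Beyond this bookkeeping, the argument reduces to strong convexity combined with the first-order optimality condition.
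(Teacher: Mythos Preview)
Your argument is correct and complete: the decomposition of $H$ in $u$, the first-order condition, the strong-monotonicity estimate $(f_{0u}(t,x,u)-f_{0u}(t,x,u'))(u-u')\ge 2C_f|u-u'|^2$ obtained by adding the two instances of (H4) with $x'=x$, and the final Lipschitz inequality are all handled cleanly. One cosmetic remark: (H1) only guarantees $B_u\le L$, not strict inequality, so your parenthetical ``the strictly smaller $B_u$'' slightly overstates the point.

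As for comparison with the paper: the paper does not prove this lemma at all but simply borrows it from \cite[Lemma~1]{PA}; the only ingredient the paper records explicitly is the first-order condition \eqref{remark:1}, which is precisely the identity your whole argument pivots on. So you have supplied the self-contained proof that the paper outsources, and your route---strong convexity plus manipulation of the optimality condition---is the natural one and matches what one finds in the cited reference.
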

	
	In fact, under assumptions (H1)-(H4), if we take derivative of $H$ with respect to $u$, we know that $\hat{u}(t,x,p,q,\tilde{q})$ satisfies 
	\begin{equation}\label{remark:1}
		b_2(t)p+\sigma_2(t)q+\tilde{\sigma}_2(t)\tilde{q}+f_{0u}(t,x,\hat{u}(t,x,p,q,\tilde{q}))=0,\quad t\in[0,T],\quad x,p,q,\tilde{q}\in\br.
	\end{equation}
	We know from Lemma~\ref{lemma:u} that $\hat{u}$ is Lipschitz continuous with respect to $(x,p,q,\tilde{q})$ uniformly in $t\in [0,T]$. We define
	\begin{equation*}
		\hat{u}^0(t):=\hat{u}(t,0,0,0,0),\quad t\in[0,T].
	\end{equation*}
	Now we give a bound of $\hat{u}^0(t)$. From \eqref{remark:1} we know that $\hat{u}^0(t)$ satisfies
	\begin{equation*}
		f_{0u}(t,0,\hat{u}^0(t))=0.
	\end{equation*}
	Using the concex assumption (H4), we have that
	\begin{align*}
		&f_0(t,0,\hat{u}^0(t))-f_0(t,0,0)-f_{0u}(t,0,0)\hat{u}^0(t)\geq C_f|\hat{u}^0(t)|^2,\\
		&f_0(t,0,0)-f_0(t,0,\hat{u}^0(t))+f_{0u}(t,0,\hat{u}^0(t))\hat{u}^0(t)\geq C_f|\hat{u}^0(t)|^2,
	\end{align*}
	which imply
	\begin{equation*}
		-f_{0u}(t,0,0)\hat{u}^0(t)\geq 2C_f|\hat{u}^0(t)|^2.
	\end{equation*}
	The above estimate and assumption (H2) show that 
	\begin{equation}\label{bound}
		|\hat{u}^0(t)|\le L(2C_f)^{-1}.
	\end{equation}
	
	We are ready to state the stochastic maximum principle for a given stochastic flow of probability measures $m=\{m_t,0\le t\le T\}\in\mathcal{M}_2(C[0,T])$. We define the control problem $\mathscr{P}^m$:
	\begin{equation*}
		\left\{
		\begin{aligned}
			&\hat{u}\in\argmin_{u\in\mathcal{L}_{\mathscr{F}}^2(0,T)}\e\big[\int_0^T f(t,X_t^u,u_t,m_t)dt+g(X_T^u,m_T)\big],\\
			&X_t^u=\xi_0+\int_0^tb(s,X_s^u,u_s,m_s)ds+\int_0^t\sigma(s,X_s^u,u_s,m_s)dW_s+\int_0^t\tilde{\sigma}(s,X_s^u,u_s,m_s)d\tilde{W}_s,\\
			&\qquad\qquad\qquad\qquad\qquad\qquad\qquad\qquad\qquad\qquad\qquad\qquad\qquad\quad t\in(0,T],\quad \xi_0\in\lr_{\f_0}^2.
		\end{aligned}
		\right.
	\end{equation*}
	Note that $\mathscr{P}^m$ is a classical control problem with the random coefficients $(b,\sigma,\tilde{\sigma},f)(t,\cdot,\cdot,m_t)$ for $t\in[0,T]$ and $g(\cdot,m_T)$. We have the following stochastic maximum principle. 
	
	\begin{theorem}\label{thm:mp}
		Suppose that assumptions (H1)-(H4) hold. For a given flow of probability measures $m=\{m_t,0\le t\le T\}\in\mathcal{M}_2(C[0,T])$ and $\xi_0\in\lr^2_{\f_0}$, if the following FBSDE
		\begin{equation}\label{FBSDE^m}
			\left\{	
			\begin{aligned}
				&dX_t=b(t,X_t,\hat{u}(t,X_t,p_t,q_t,\tilde{q}_t),m_t)dt+\sigma(t,X_t,\hat{u}(t,X_t,p_t,q_t,\tilde{q}_t),m_t)dW_t\\
				&\qquad\quad+\tilde{\sigma}(t,X_t,\hat{u}(t,X_t,p_t,q_t,\tilde{q}_t),m_t)d\tilde{W}_t,\quad t\in(0,T];\\
				&dp_t=-\partial_x H(t,X_t,p_t,q_t,\tilde{q}_t,\hat{u}(t,X_t,p_t,q_t,\tilde{q}_t),m_t)dt+q_tdW_t+\tilde{q}_td\tilde{W}_t,\quad t\in[0,T);\\
				&X_0=\xi_0,\quad p_T=g_x(X_T,m_T)
			\end{aligned}
			\right.
		\end{equation}
		has an adapted solution $\{(\hat{X}_t,\hat{p}_t,\hat{q}_t,\hat{\tilde{q}}_t),0\le t\le T\}$ such that 
		\begin{equation}\label{estimate1}
			\mathbb{E}\big[\sup_{0\le t\le T}|(\hat{X}_t,\hat{p}_t)|^2+\int_0^T|(\hat{q}_t,\hat{\tilde{q}}_t)|^2dt \big]<\infty,
		\end{equation}
		then, $\hat{u}=\{\hat{u}(t,\hat{X}_t,\hat{p}_t,\hat{q}_t,\hat{\tilde{q}}_t),0\le t\le T\}$ is an optimal control of the control problem $\mathscr{P}^m$. Furthermore, for any $u\in \lr_{\f}^2(0,T)$, we have the following estimate 
		\begin{equation}\label{estimate}
			J(\hat{u}|m)+C_f \mathbb{E}\big[\int_0^T|u_t-\hat{u}_t|^2dt\big]\le  J(u|m).
		\end{equation}
		In particular, $\hat{u}$ is the unique optimal control.
	\end{theorem}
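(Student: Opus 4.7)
The plan is to implement the classical sufficient Pontryagin principle: compare the cost of an arbitrary admissible control $u$ with that of the candidate $\hat{u}_t := \hat{u}(t,\hat{X}_t,\hat{p}_t,\hat{q}_t,\hat{\tilde{q}}_t)$, expressing the difference through the adjoint process and exploiting the convexity embedded in (H1) and (H4).

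First I would fix an arbitrary $u\in\mathcal{L}^2_{\mathcal{F}}(0,T)$ with associated state $X^u$; under (H1) this SDE admits a strong solution in $\mathcal{S}^2_{\mathcal{F}}(0,T)$, and in particular $X^u-\hat{X}\in\mathcal{S}^2_{\mathcal{F}}(0,T)$ so that all stochastic integrals appearing below will be true martingales (combined with \eqref{estimate1}). The convexity of $g(\cdot,m_T)$ in (H4) together with the terminal condition $\hat{p}_T=g_x(\hat{X}_T,m_T)$ gives
\begin{equation*}
g(X^u_T,m_T)-g(\hat{X}_T,m_T)\ge \hat{p}_T(X^u_T-\hat{X}_T).
\end{equation*}
Next I would apply It\^o's formula to $\hat{p}_t(X^u_t-\hat{X}_t)$ and take expectations. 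Writing $\phi^u_t:=\phi(t,X^u_t,u_t,m_t)$ and $\phi^{\hat{u}}_t:=\phi(t,\hat{X}_t,\hat{u}_t,m_t)$ for $\phi=b,\sigma,\tilde{\sigma},f$, the cross-variation and drift terms reorganize, using $H=bp+\sigma q+\tilde{\sigma}\tilde{q}+f$, into
\begin{equation*}
\mathbb{E}[\hat{p}_T(X^u_T-\hat{X}_T)]=\mathbb{E}\!\int_0^T\!\Big[H(t,X^u_t,\hat{p}_t,\hat{q}_t,\hat{\tilde{q}}_t,u_t,m_t)-H(t,\hat{X}_t,\hat{p}_t,\hat{q}_t,\hat{\tilde{q}}_t,\hat{u}_t,m_t)-(f^u_t-f^{\hat{u}}_t)-\partial_x H^{\hat{u}}_t(X^u_t-\hat{X}_t)\Big]dt.
\end{equation*}

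The decisive step is to exploit joint convexity of the Hamiltonian in $(x,u)$. Since (H1) makes $bp+\sigma q+\tilde{\sigma}\tilde{q}$ affine in $(x,u)$, and since $f=f_0+f_1$ with $f_0$ jointly convex in $(x,u)$ with modulus $C_f$ in $u$ and $f_1$ convex in $x$ by (H4), the map $(x,u)\mapsto H(t,x,\hat{p}_t,\hat{q}_t,\hat{\tilde{q}}_t,u,m_t)$ is convex with the same $C_f$-strong convexity in $u$. Combined with the first-order optimality \eqref{remark:1}, which says $\partial_u H$ vanishes at $\hat{u}_t$, this yields pathwise
\begin{equation*}
H(t,X^u_t,\hat{p}_t,\hat{q}_t,\hat{\tilde{q}}_t,u_t,m_t)-H(t,\hat{X}_t,\hat{p}_t,\hat{q}_t,\hat{\tilde{q}}_t,\hat{u}_t,m_t)-\partial_x H^{\hat{u}}_t(X^u_t-\hat{X}_t)\ge C_f|u_t-\hat{u}_t|^2.
\end{equation*}
Substituting this back and assembling with the convexity bound on $g$ gives
\begin{equation*}
J(u|m)-J(\hat{u}|m)\ge \mathbb{E}\!\int_0^T (f^u_t-f^{\hat{u}}_t)\,dt+\mathbb{E}[\hat{p}_T(X^u_T-\hat{X}_T)]\ge C_f\,\mathbb{E}\!\int_0^T|u_t-\hat{u}_t|^2\,dt,
\end{equation*}
which is exactly \eqref{estimate}. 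Optimality of $\hat{u}$ and its uniqueness follow immediately: any other minimizer $u^\ast$ would force $\mathbb{E}\int_0^T|u^\ast_t-\hat{u}_t|^2\,dt=0$, hence $u^\ast=\hat{u}$ in $\mathcal{L}^2_{\mathcal{F}}(0,T)$.

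The only delicate point I expect is the justification that the local martingale terms arising in the It\^o expansion of $\hat{p}_t(X^u_t-\hat{X}_t)$ are genuine martingales on $[0,T]$; I would handle this by a standard localization along stopping times $\tau_n\uparrow T$ combined with \eqref{estimate1}, the linear-growth estimates on $(b,\sigma,\tilde{\sigma})$ from (H1), and $X^u\in\mathcal{S}^2_{\mathcal{F}}(0,T)$, passing to the limit by dominated convergence. Everything else is bookkeeping around the decomposition $H=H_{\text{linear}}+f$ and the first-order condition \eqref{remark:1}, so no further obstacle is anticipated.
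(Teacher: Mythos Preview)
Your proposal is correct and is precisely the standard sufficient Pontryagin argument that the paper invokes by citing \cite[Theorem 6.4.6]{HP} and \cite[Theorem 2.2]{PA} rather than writing out in detail. The It\^o expansion of $\hat{p}_t(X^u_t-\hat{X}_t)$, the reorganization into Hamiltonian differences, and the use of the joint convexity in $(x,u)$ together with the first-order condition \eqref{remark:1} are exactly the steps underlying those references, and your handling of the local-martingale terms via localization and \eqref{estimate1} is the routine justification.
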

	
	\begin{proof}
		The proof is standard and we refer to \cite[Theorem 6.4.6]{HP}. The estimate \eqref{estimate} requires strict convexity in $u$ of $f_0$. The proof can be found in \cite[Theorem 2.2]{PA}.
	\end{proof}
	
	We now show FBSDE \eqref{FBSDE^m} is uniquely solvable, which implies that problem $\mathscr{P}^m$ is uniquely solvable. We state the slightly more general result for a random terminal function and an arbitrary initial and terminal time, which will arise in a subsequent section. It is an immediate consequence of \cite[Theorem 2.3]{SP}, concerning the existence and uniqueness of a solution to a monotone FBSDE.
	
	\begin{theorem}\label{thm:mp'}
		Let $0\le s< \tau\le T$ and $\xi\in\lr_{\f_s}^2$. Suppose that (H1)-(H4) hold. Suppose that $v:\br\times\Omega\to\br$ is an $\f_\tau$-measurable $C_v$-Lipschitz continuous function satisfying the following  monotonicity condition
		\begin{equation}\label{fbsde_v}
			(v(x',\omega)-v(x,\omega))(x'-x)\geq 0,\quad x,x'\in\br,\quad \omega\in\Omega.
		\end{equation}
		Then, for a given flow of probability measures $m=\{m_t,s\le t\le \tau\}\in\mathcal{M}_2(C[s,\tau])$, there exists a unique adapted solution $\{(X_t,p_t,q_t,\tilde{q}_t),s\le t\le \tau\}$ to FBSDE
		\begin{equation*}\label{fbsde_equation}
			\left\{
			\begin{aligned}
				&dX_t=b(t,X_t,\hat{u}(t,X_t,p_t,q_t,\tilde{q}_t),m_t)dt+\sigma(t,X_t,\hat{u}(t,X_t,p_t,q_t,\tilde{q}_t),m_t)dW_t\\
				&\qquad\quad+\tilde{\sigma}(t,X_t,\hat{u}(t,X_t,p_t,q_t,\tilde{q}_t),m_t)d\tilde{W}_t,\quad t\in(s,\tau];\\
				&dp_t=-\partial_x H(t,X_t,p_t,q_t,\tilde{q}_t,\hat{u}(t,X_t,p_t,q_t,\tilde{q}_t),m_t)dt+q_tdW_t+\tilde{q}_td\tilde{W}_t,\quad t\in[s,\tau);\\
				&X_s=\xi,\quad p_{\tau}=v(X_{\tau})
			\end{aligned}
			\right.
		\end{equation*}
		such that
		\begin{equation}\label{fbsde_estimate}
			\begin{split}
				&\e\big[\sup_{s\le t\le \tau}|(\hat{X}_t,\hat{p}_t)|^2+\int_s^{\tau}|(\hat{q}_t,\hat{\tilde{q}}_t)|^2dt\big]\le C\big(\e[|\xi|^2+|v(0)|^2+\m_2(m)]+1\big) 
			\end{split}
		\end{equation}
		for some constant $C$ depending on $(L,T,C_f,C_v)$.
	\end{theorem}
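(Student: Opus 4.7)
The plan is to verify that the FBSDE stated in Theorem~\ref{thm:mp'} falls inside the monotone FBSDE framework of Peng-Wu~\cite{SP}, after which the quoted theorem gives both existence/uniqueness and the estimate \eqref{fbsde_estimate}. First I would record the regularity of the coefficients appearing in the FBSDE. By (H1) the maps $b,\sigma,\tilde\sigma$ are affine in $(x,u)$ with $b_1,b_2,\sigma_1,\sigma_2,\tilde\sigma_1,\tilde\sigma_2$ bounded by $L$ and $\phi_0(t,m_t)$ square-integrable because $m\in\mathcal{M}_2(C[s,\tau])$. The driver of the backward equation is
\begin{equation*}
-\partial_x H(t,x,p,q,\tilde q,\hat u,m_t)=-\bigl(b_1(t)p+\sigma_1(t)q+\tilde\sigma_1(t)\tilde q+f_x(t,x,\hat u,m_t)\bigr),
\end{equation*}
which by (H3) together with Lemma~\ref{lemma:u} is Lipschitz in $(x,p,q,\tilde q)$ uniformly in $(t,\omega)$, and has at most linear growth once $m$ and $\hat u^0$ are controlled via (H2) and the bound \eqref{bound}. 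The terminal condition $v(\cdot,\omega)$ is $C_v$-Lipschitz by assumption.

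The crucial step is the monotonicity estimate. Let $(X,p,q,\tilde q)$ and $(X',p',q',\tilde q')$ be any two candidate solutions driven by the same $m$; write $\Delta X=X-X'$, etc., and $\Delta\hat u=\hat u(t,X,p,q,\tilde q)-\hat u(t,X',p',q',\tilde q')$. Applying It\^o's formula to $\Delta X_t\,\Delta p_t$ on $[s,\tau]$ and taking expectations produces, after the martingale terms vanish,
\begin{equation*}
\mathbb{E}[\Delta X_{\tau}\,\Delta p_{\tau}]-\mathbb{E}[\Delta X_s\,\Delta p_s]=\mathbb{E}\int_s^{\tau}\bigl(\Delta p\cdot\Delta b+\Delta q\cdot\Delta\sigma+\Delta\tilde q\cdot\Delta\tilde\sigma-\Delta X\cdot\partial_x H\text{-difference}\bigr)\,dt.
\end{equation*}
Using (H1) to expand the $\Delta b,\Delta\sigma,\Delta\tilde\sigma$ terms and the explicit form of $\partial_x H$, the $b_1,\sigma_1,\tilde\sigma_1$ contributions cancel, leaving exactly the cross term
\begin{equation*}
-\mathbb{E}\int_s^{\tau}\Bigl[(f_{1x}(t,X,m_t)-f_{1x}(t,X',m_t))\Delta X+(f_{0x}\Delta X+f_{0u}\Delta\hat u)\text{ difference}-(b_2 p+\sigma_2 q+\tilde\sigma_2\tilde q)\text{-difference}\cdot\Delta\hat u\Bigr]\,dt.
\end{equation*}
By the first-order condition \eqref{remark:1} the bracket involving $(b_2,\sigma_2,\tilde\sigma_2)$ equals $-(f_{0u}(t,X,\hat u)-f_{0u}(t,X',\hat u'))\Delta\hat u$, so invoking the joint convexity of $f_0$ together with its strict convexity in $u$ yields
\begin{equation*}
(f_{0x}(t,X,\hat u)-f_{0x}(t,X',\hat u'))\Delta X+(f_{0u}(t,X,\hat u)-f_{0u}(t,X',\hat u'))\Delta\hat u\geq 2C_f|\Delta\hat u|^2,
\end{equation*}
while (H4) gives $(f_{1x}(t,X,m_t)-f_{1x}(t,X',m_t))\Delta X\geq 0$ and the monotonicity of $v$ gives $\mathbb{E}[\Delta X_{\tau}\Delta p_{\tau}]\geq 0$. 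These are precisely the structural inequalities needed for the G-monotonicity hypothesis of \cite[Theorem 2.3]{SP}.

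Equipped with the Lipschitz regularity and this monotonicity, I would invoke \cite[Theorem 2.3]{SP} verbatim to obtain the unique adapted quadruple $(X,p,q,\tilde q)$. The estimate \eqref{fbsde_estimate} is then a routine a priori bound: substituting $X'=0,p'=0,q'=0,\tilde q'=0$ (with the corresponding drift residuals playing the role of inhomogeneities) in the It\^o expansion above, combining the $2C_f|\hat u|^2$ coercivity with the linear growth of $\phi_0$, $f_{0x}$, $f_{0u}$, $f_{1x}$ and $v$ (hence by (H2) controlled by $|v(0)|+\mathcal{M}_2(m)^{1/2}$), and absorbing the cross terms via Young's inequality produces the claimed bound, with the final constant $C$ depending only on $(L,T,C_f,C_v)$. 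The main obstacle I anticipate is the careful bookkeeping of the monotonicity computation so that the non-$L$-controlled dependence of $\phi_0$ on $m$ does not enter the coercive side; this is handled by putting those terms on the inhomogeneous side and estimating them through $\mathcal{M}_2(m)$, exactly as in the quoted theorem.
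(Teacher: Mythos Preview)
Your proposal is correct and follows exactly the route the paper takes: the paper does not give an independent proof of Theorem~\ref{thm:mp'} but simply states that it ``is an immediate consequence of \cite[Theorem 2.3]{SP}, concerning the existence and uniqueness of a solution to a monotone FBSDE.'' Your sketch supplies the verification of the Peng--Wu hypotheses (Lipschitz regularity of the coefficients after inserting $\hat u$, and the monotonicity/coercivity coming from the It\^o computation on $\Delta X_t\,\Delta p_t$ combined with (H4) and the monotonicity of $v$), which is precisely what the citation is implicitly asserting; the a~priori estimate argument you outline is likewise the standard one.
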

	
	Note that this theorem implies that there exist a unique adapted solution $\{(X_t,p_t,q_t,\tilde{q}_t),0\le t\le T\}$ to FBSDE \eqref{FBSDE^m} by setting $s=0$, $\tau=T$  and $v(x)=g_x(x,m_T)$. Assumptions (H3) and (H4) ensure that $g_x(x,m_T)$ is $L$-Lipschitz continuous with respect to $x$ and satisfies the monotonicity condition \eqref{fbsde_v}. If we further assume that $\e[\m_2(m)]<\infty$, then from \eqref{fbsde_estimate} and the linear growth assumption (H2), we have the estimate \eqref{estimate1}. Then, as a consequence of Theorems~\ref{thm:mp}, problem $\mathscr{P}^m$ has a unique optimal control given by $\hat{u}_t=\hat{u}(t,X_t,p_t,q_t,\tilde{q}_t)$.
	
	Now we turn to Problem~\ref{def1}. It states that given the stochastic flow of probability measures $m\in\m_2(C[0,T])$, the state process $X^{u^m}$ corresponding to the optimal control $u^m$ of the problem $\mathscr{P}^m$ satisfies the following consistency
	\begin{equation*}
		m_t=\lr(X_t^{u^m}|\tilde{\f}_t).
	\end{equation*}
	Plugging this into Theorem~\ref{thm:mp}, we have the stochastic maximum principle for Problem~\ref{def1}.
	\begin{theorem}
		Suppose that assumptions (H1)-(H4) hold. For $\xi_0\in\lr_{\f_0}^2$, if the following FBSDE	
		\begin{equation}\label{fbsde1}
			\left\{
			\begin{aligned}
				&dX_t=b(t,X_t,\hat{u}(t,X_t,p_t,q_t,\tilde{q}_t),\lr({X_t|\tilde{\f}_t}))dt+\sigma(t,X_t,\hat{u}(t,X_t,p_t,q_t,\tilde{q}_t),\lr({X_t|\tilde{\f}_t}))dW_t\\
				&\quad\quad\quad+\tilde{\sigma}(t,X_t,\hat{u}(t,X_t,p_t,q_t,\tilde{q}_t),\lr({X_t|\tilde{\f}_t}))d\tilde{W}_t,\quad t\in(0,T];\\
				&dp_t=-\partial_x H(t,X_t,p_t,q_t,\tilde{q}_t,\hat{u}(t,X_t,p_t,q_t,\tilde{q}_t),\lr({X_t|\tilde{\f}_t}))dt+q_tdW_t+\tilde{q}_td\tilde{W}_t,\quad t\in[0,T);\\
				&X_0=\xi_0,\quad p_T=g_x(X_T,\lr({X_T|\tilde{\f}_T}))
			\end{aligned}
			\right.
		\end{equation}
		has an adapted solution $\{(\hat{X}_t,\hat{p}_t,\hat{q}_t,\hat{\tilde{q}}_t),0\le t\le T\}$
		such that 
		\begin{equation}\label{space}
			\mathbb{E}\big[\sup_{0\le t\le T}|(\hat{X}_t,\hat{p}_t)|^2+\int_0^T|(\hat{q}_t,\hat{\tilde{q}}_t)|^2dt \big]<\infty,
		\end{equation}
		then, $\hat{u}_t=\hat{u}(t,\hat{X}_t,\hat{p}_t,\hat{q}_t,\hat{\tilde{q}}_t)$ is an optimal control of Problem~\ref{def1}.
	\end{theorem}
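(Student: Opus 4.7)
The plan is to reduce the claim directly to Theorem~\ref{thm:mp} by choosing $m$ to be the $\tilde{\f}$-conditional law of the forward component of the given solution. Letting $\{(\hat{X}_t,\hat{p}_t,\hat{q}_t,\hat{\tilde{q}}_t), 0 \le t \le T\}$ denote the assumed adapted solution of \eqref{fbsde1} satisfying \eqref{space}, I would define $m_t := \lr(\hat{X}_t \mid \tilde{\f}_t)$ for each $t \in [0,T]$ and verify that, with this choice, the FBSDE \eqref{fbsde1} literally coincides with the FBSDE \eqref{FBSDE^m} of the control problem $\mathscr{P}^m$.

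The first step is to check that the flow $m=\{m_t,0\le t\le T\}$ is admissible, i.e., $\tilde{\f}$-progressively-measurable, taking values in $\mathcal{M}_2(C[0,T])$, with $\e[\m_2(m)]<\infty$. Progressive measurability is immediate from the definition of a conditional law. The integrability bound follows from \eqref{space} together with the conditional Jensen inequality, which gives $\e[\sup_{0\le t\le T}\int_{\br}|x|^2 dm_t(x)]\le \e[\sup_{0\le t\le T}|\hat{X}_t|^2]<\infty$. Continuity of $t\mapsto m_t$ in the Wasserstein distance $W_2$ (and the corresponding $D_2$-style path integrability) follows from the pathwise continuity of $\hat{X}$ combined with dominated convergence, using the uniform integrability provided by \eqref{space}.

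Once $m$ is shown to be admissible, the coefficients $(b,\sigma,\tilde{\sigma},f)(t,\cdot,\cdot,m_t)$ and $g(\cdot,m_T)$ coincide, along the given trajectory, with those appearing in \eqref{fbsde1}. Consequently, $\{(\hat{X}_t,\hat{p}_t,\hat{q}_t,\hat{\tilde{q}}_t)\}$ is an adapted solution of the associated FBSDE \eqref{FBSDE^m} satisfying \eqref{estimate1}. Theorem~\ref{thm:mp} then asserts that $\hat{u}_t=\hat{u}(t,\hat{X}_t,\hat{p}_t,\hat{q}_t,\hat{\tilde{q}}_t)$ is the (unique) optimal control of $\mathscr{P}^m$, with optimal state $\hat{X}$. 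By our very choice $m_t=\lr(\hat{X}_t\mid\tilde{\f}_t)$, which is exactly the consistency condition appearing in Problem~\ref{def1}, so $\hat{u}$ is an optimal control for Problem~\ref{def1}.

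The main technical obstacle is the admissibility verification for $m$: one must produce a version of the flow of conditional laws that genuinely lies in $\mathcal{M}_2(C[0,T])$ with a continuous time evolution, so that $t\mapsto m_t$ can be plugged into the $m$-dependent coefficients of \eqref{FBSDE^m} on the same probability space. This is where the integrability \eqref{space} is essential: without it one has no a priori control on $\e[\sup_{0\le t\le T}|\hat{X}_t|^2]$, and hence no way to place $m$ in $\mathcal{M}_2(C[0,T])$. Once this is done, the rest of the argument is a clean, essentially notational, application of the stochastic maximum principle already proved in Theorem~\ref{thm:mp}.
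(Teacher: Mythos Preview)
Your proposal is correct and follows exactly the approach the paper takes: the paper simply remarks that, once one sets $m_t=\lr(\hat{X}_t\mid\tilde{\f}_t)$, the FBSDE \eqref{fbsde1} becomes \eqref{FBSDE^m} and Theorem~\ref{thm:mp} applies, with the consistency condition of Problem~\ref{def1} holding by construction. Your write-up merely makes explicit the admissibility check for the flow $m$ that the paper leaves implicit.
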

	
	In the rest of this paper, we discuss the existence and uniqueness of the solution of FBSDE \eqref{fbsde1}. We will give two different methods under the following additional conditions on the dependence of $(b_0,\sigma_0,\tilde{\sigma}_0,f_{1x},g_x)$ on the measure variable $m$. 
	
	\textbf{(H5)} (Lipschitz continuity in $m$) The functions $(b_0,\sigma_0,\tilde{\sigma}_0)(t,\cdot):\pr_2(\br)\to\br$ are $L_m$-Lipschitz continuous uniformly in $t\in[0,T]$. The function $f_{1x}(t,x,\cdot):\pr_2(\br)\to\br$ is $L$-Lipschitz continuous uniformly in $(t,x)\in[0,T]\times\br$. The function $g_x(x,\cdot):\pr_2(\br)\to\br$ is $L$-Lipschitz continuous uniformly in $x\in\br$. That is,
	\begin{equation*}
		\begin{split}
			&|b_0(t,m')-b_0(t,m)|+|\sigma_0(t,m')-\sigma_0(t,m)|+|\tilde{\sigma}_0(t,m')-\tilde{\sigma}_0(t,m)|\\
			&\qquad\le L_m W_2(m,m'),\quad t\in[0,T],\quad m,m'\in \pr_2(\br);\\
			&|f_{1x}(t,x,m')-f_{1x}(t,x,m)|+|g_x(x,m')-g_x(x,m)|\\
			&\qquad\le L W_2(m,m'),\quad t\in[0,T], \quad x\in \br,\quad m,m'\in \pr_2(\br).
		\end{split}
	\end{equation*}
	For notational convenience, we assume that $L_m\le L$ by setting $L=\max\{L, L_m\}$.
	
	\textbf{(H6)} (Weak monotonicity in $m$) For any $\gamma\in \pr_2(\br^2)$ with marginals $m$ and $m'$, 
	\begin{equation*}
		\begin{split}
			&\int_{\br^2}[(f_{1x}(t,x,m)-f_{1x}(t,y,m'))(x-y)]\gamma(dx,dy)\geq 0, \quad t\in[0,T],\quad m,m'\in\pr_2(\br);\\
			&\int_{\br^2}[(g_x(x,m)-g_x(y,m'))(x-y)]\gamma(dx,dy)\geq 0,\quad m,m'\in\pr_2(\br).    
		\end{split}
	\end{equation*}
	Equivalently, for any square-integrable random variables $\xi$ and $\xi'$ on the same probability space,
	\begin{equation*}
		\begin{split}
			&\e [(f_{1x}(t,\xi',\lr(\xi'))-f_{1x}(t,\xi,\lr(\xi)))(\xi'-\xi)]\geq 0,\quad t\in[0,T];\\
			&\e [(g_x(\xi',\lr(\xi'))-g_x(\xi,\lr(\xi)))(\xi'-\xi)]\geq 0.    
		\end{split}
	\end{equation*}
	
	\section{Solvability of FBSDE \eqref{fbsde1}: Method One}\label{M1}
	In this section, we give the existence and uniqueness result of the solution to FBSDE \eqref{fbsde1} by the method of continuation in coefficients. We have the following main result.
	\begin{theorem}\label{main1_thm}
		Suppose that assumptions (H1)-(H6) hold and $\xi_0\in\mathcal{L}_{{\mathscr{F}}_0}^2$. There exists $\delta>0$ depending only on $(L,T)$ such that FBSDE \eqref{fbsde1} is uniquely solvable when $L_mC_f^{-1}\le\delta$.
	\end{theorem}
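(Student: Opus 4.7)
The plan is to adapt the Hu--Peng method of continuation in coefficients, as used in \cite{CR} and \cite{SAWR}, to the conditional-distribution dependent FBSDE \eqref{fbsde1}. For each $\alpha\in[0,1]$, I would embed \eqref{fbsde1} into a one-parameter family FBSDE$(\alpha)$ in which the $m$-dependent parts of the state coefficients are scaled by $\alpha$: replace $\phi_0(t,m)$ by $\phi_0^\alpha(t,m):=\alpha\phi_0(t,m)+(1-\alpha)\phi_0(t,\delta_0)$ for $\phi=b,\sigma,\tilde\sigma$, where $\delta_0$ is the Dirac mass at $0$, while keeping the adjoint driver and terminal condition (which involve $f_{1x}$ and $g_x$) intact, and allow exogenous $L^2$ input processes on the right-hand sides. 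At $\alpha=0$ the state dynamics are free of $\lr(X_t|\tilde{\f}_t)$ and unique solvability follows from the result of Ahuja et al.\ \cite{SAWR}; the target FBSDE \eqref{fbsde1} corresponds to $\alpha=1$ with vanishing inputs.

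The heart of the argument is a uniform a priori estimate comparing two solutions of FBSDE$(\alpha)$ with inputs $I$ and $I'$. I would apply It\^o's formula to $(X_t-X'_t)(p_t-p'_t)$ on $[0,T]$, take expectation, and exploit three structural ingredients simultaneously. First, because $\hat u$ minimizes the Hamiltonian, the strict convexity of $f_0$ in $u$ in (H4) produces a coercive term bounded below by $2C_f\,\e\!\int_0^T|\hat u_t-\hat u'_t|^2\,dt$, exactly as in the proof of \eqref{estimate}. Second, the contributions involving differences of $f_{1x}$ and $g_x$ in the measure argument are non-negative by the weak monotonicity (H6) and can be discarded from the upper bound. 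Third, the Lipschitz bounds (H5) on $(b_0,\sigma_0,\tilde\sigma_0)$ in $m$ produce error terms of order $L_m W_2(\lr(X_t|\tilde{\f}_t),\lr(X'_t|\tilde{\f}_t))\le L_m\,\e[|X_t-X'_t|^2\mid\tilde{\f}_t]^{1/2}$. Combined with the Lipschitz bounds $L(2C_f)^{-1}$ and $B_u(2C_f)^{-1}$ on $\hat u$ from Lemma \ref{lemma:u} and standard forward-SDE/BSDE moment estimates, this yields an inequality of the form $N^2\le C_1\|I-I'\|^2+C_2 L_m C_f^{-1}N^2$, where $N^2:=\e[\sup_{0\le t\le T}(|X_t-X'_t|^2+|p_t-p'_t|^2)+\int_0^T(|q_t-q'_t|^2+|\tilde q_t-\tilde q'_t|^2)\,dt]$ and $C_1,C_2$ depend only on $(L,T)$. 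Choosing $\delta$ so that $C_2 L_m C_f^{-1}\le\tfrac12$ absorbs the last term and leaves a genuine contraction estimate, uniform in $\alpha\in[0,1]$.

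With the uniform a priori estimate in hand, the continuation step proceeds in the usual way: if FBSDE$(\alpha_0)$ is uniquely solvable for every admissible input, a Banach fixed point argument (iterating over the extra $\eta$-perturbation via the solution map of FBSDE$(\alpha_0)$) delivers solvability of FBSDE$(\alpha_0+\eta)$ for every $\eta\in[0,\eta_0]$, with $\eta_0>0$ depending only on $(L,T)$. Iterating $\lceil1/\eta_0\rceil$ times starting from $\alpha_0=0$ reaches $\alpha=1$ and gives the unique solvability of \eqref{fbsde1}, which is exactly the statement of the theorem.

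The main obstacle is the a priori estimate. The diffusion differences $\sigma_0^\alpha(t,m)-\sigma_0^\alpha(t,m')$ and $\tilde\sigma_0^\alpha(t,m)-\tilde\sigma_0^\alpha(t,m')$ contribute quadratically through the It\^o correction rather than linearly, so the coercivity gained from the strict convexity of $f_0$ has to be balanced carefully against these cross terms in order to keep $C_2$ independent of $C_f$. Moreover, after substitution of $\hat u$ the coupling propagates the $(p,q,\tilde q)$-dependence into the state coefficients, and this has to be tracked to ensure that the final threshold $\delta$ depends only on $(L,T)$; without the weak monotonicity (H6) the dangerous $m$-dependent terms in $f_{1x}$ and $g_x$ would have to be absorbed by the same smallness condition, degrading the bound.
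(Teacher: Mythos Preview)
Your proposal is correct and follows the same continuation-in-coefficients strategy as the paper, with the identical core a priori estimate: apply It\^o's formula to $(X_t-X'_t)(p_t-p'_t)$, extract the coercive term $2C_f\,\e\int_0^T|\Delta u_t|^2\,dt$ from the strict convexity in (H4), discard the $f_{1x}$- and $g_x$-contributions via the weak monotonicity (H6), bound the $\phi_0$-difference terms by $L_m$ times the Wasserstein distance using (H5), and close with standard SDE/BSDE moment bounds to obtain a stability estimate uniform in the continuation parameter.

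The one notable difference is the choice of interpolating family. The paper scales \emph{all} coefficients---$b,\sigma,\tilde\sigma$, the backward driver $H_x$, and the terminal datum $g_x$---by $\gamma\in[0,1]$, so that at $\gamma=0$ the forward and backward equations are completely decoupled (driven only by the exogenous inputs) and the base case is trivial. You instead interpolate only the $m$-dependent parts $\phi_0$ of the state coefficients, keeping the adjoint driver and terminal condition intact; your base case $\alpha=0$ is then still a coupled conditional-law-dependent FBSDE, but with law-free forward dynamics, and you invoke \cite{SAWR} to solve it. Both routes give the same uniform step size and the same threshold $\delta$ depending only on $(L,T)$; the paper's version is more self-contained, while yours makes explicit that the sole obstruction beyond \cite{SAWR} is the $m$-dependence of $(b_0,\sigma_0,\tilde\sigma_0)$. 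One minor correction: your constant $C_1$ will in fact depend on $C_f$ through the Lipschitz constant $L(2C_f)^{-1}$ of $\hat u$ from Lemma~\ref{lemma:u}; this is harmless, since only $C_2$ (hence $\delta$) must depend solely on $(L,T)$, and both need merely be independent of the continuation parameter.
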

	
	In this section, we always suppose that assumptions (H1)-(H6) hold. To prove the above theorem, a natural and simple strategy consists in modifying the coefficients in a linear way and proving that there is the existence and uniqueness when coefficients in the FBSDE are slightly perturbed. To avoide heavy notations, we use the following conventions. The notation $\{\theta_t,0\le t\le T\}$ denotes a process $\{(X_t,u_t,m_t),0\le t\le T\}$ with $m_t=\lr(X_t|\tilde{\f}_t)$. The notation $\{\Theta_t,0\le t\le T\}$ stands for a process of the form $\{(\theta_t,p_t,q_t,\tilde{q}_t),0\le t\le T\}$. We denote by $\mathbb{S}$ the space of processes $\{\Theta_t,0\le t\le T\}$ such that $\{(X_t,u_t,p_t,q_t,\tilde{q}_t),0\le t\le T\}$ is $\f_t$-progressively-measurable, and
	\begin{equation*}
		\|\Theta\|_{\mathbb{S}}:=\big(\e\big[\sup_{0\le t\le T}|( X_t,p_t)|^2+\int_0^T|(u_t,q_t,\tilde{q}_t)|^2dt\big]\big)^{\frac{1}{2}}<+\infty.
	\end{equation*}
	
	We call an input for FBSDE \eqref{fbsde1} a five-tuple
	\begin{equation*}
		\mathcal{I}=((\ii_t^b,\ii_t^{\sigma},\ii_t^{\tilde{\sigma}},\ii_t^f)_{0\le t\le T},\ii_T^g)
	\end{equation*}
	with  $(\ii_t^b,\ii_t^{\sigma},\ii_t^{\tilde{\sigma}},\ii_t^f)_{0\le t\le T}$ being four square-integrable progressively-measurable processes and $\ii_T^g$ being a square-integrable $\f_T$-measurable random variable. Such an input is specifically designed to be injected into the dynamics of FBSDE \eqref{fbsde1}. Denote by $\mathbb{I}$ the space of all inputs, endowed with the norm
	\begin{equation*}
		\|\ii\|_{\iii}:=\e\bigg[|\ii_T^g|^2+\int_0^T |(\ii_t^b,\ii_t^{\sigma},\ii_t^{\tilde{\sigma}},\ii_t^f)|^2 dt\bigg]^{\frac{1}{2}}.
	\end{equation*}
	
	\begin{definition}
		For any $(\gamma,\xi)\in[0,1]\times \lr_{\f_0}^2$ and any input $\ii\in\iii$, denote by $\mathcal{E}(\gamma,\xi,\ii)$ the FBSDE
		\begin{equation}\label{main2}
			\left\{
			\begin{aligned}
				&dX_t=(\gamma b(t,\theta_t)+\ii_t^b)dt+(\gamma\sigma(t,\theta_t)+\ii_t^{\sigma})dW_t+(\gamma\tilde{\sigma}(t,\theta_t)+\ii_t^{\tilde{\sigma}})d\tilde{W}_t,\quad t\in(0,T];\\
				&dp_t=-(\gamma H_x(t,\Theta_t)+\ii_t^f)dt+q_tdW_t+\tilde{q}_td\tilde{W}_t,\quad t\in[0,T),
			\end{aligned}
			\right.
		\end{equation}
		with $m_t=\lr(X_t|\tilde{\f}_t)$, 
		\begin{equation}\label{main3}
			u_t=\hat{u}(t,X_t,p_t,q_t,\tilde{q}_t),\quad t\in[0,T],
		\end{equation}
		$X_0=\xi$ (initial condition) and $p_T=\gamma g_x(X_T,m_T)+\ii_T^g$ (terminal condition).
	\end{definition}
	
	Whenever $\{(X_t,p_t,q_t,\tilde{q}_t),0\le t\le T\}$ is a solution, $\{(X_t,u_t,m_t,p_t,q_t,\tilde{q}_t),0\le t\le T\}$ is referred to as the associated extended solution. Note that the forward and the backward equations in \eqref{main2} are coupled via the optimality condition \eqref{main3}. When $\gamma=1$, $\ii=0$ and $\xi=\xi_0$, the pair \eqref{main2}-\eqref{main3} coincides with FBSDE \eqref{fbsde1}. For notational convenience, we set the following deninition.
	
	\begin{definition}
		Given $\gamma\in [0,1]$, we say that property $(\mathcal{S}_\gamma)$ holds true if, for any $\xi\in \lr_{\f_0}^2$ and any $\ii\in\iii$, FBSDE $\mathcal{E}(\gamma,\xi,\ii)$ has a unique extended solution in $\s$.
	\end{definition}
	
	Our aim is to show $(\mathcal{S}_1)$ holds true. The following lemma is proved in Appendix~\ref{proof1}.
	
	\begin{lemma}\label{main1_lem1}
		Suppose that assumptions (H1)-(H6) hold. Let $\gamma\in [0,1]$ such that $(\mathcal{S}_\gamma)$ holds true. Then, there exists $\delta>0$ depending only on $(L,T)$, and a constant $C$ independent of $\gamma$, such that for any $\xi^1,\xi^2\in \lr_{\f_0}^2$ and $\ii^1,\ii^2\in\iii$, the respective extended solutions $\Theta^1$ and $\Theta^2$ of $\mathcal{E}(\gamma,\xi^1,\ii^1)$ and $\mathcal{E}(\gamma,\xi^2,\ii^2)$ satisfy
		\begin{equation*}
			\|\Theta^1-\Theta^2\|_{\mathbb{S}}\le C([\e|\xi^1-\xi^2|^2]^{\frac{1}{2}}+\|\ii^1-\ii^2\|_{\iii}),
		\end{equation*}
		when $L_mC_f^{-1}\le\delta$.
	\end{lemma}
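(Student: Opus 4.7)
\textbf{Proof proposal for Lemma~\ref{main1_lem1}.} My plan is to implement the Hu--Peng dual-product technique: apply Itô's formula to $\Delta X_t\,\Delta p_t$, use the first-order optimality \eqref{remark:1} for $\hat u$ together with the convexity in (H4) and the weak monotonicity in (H6) to extract a lower bound proportional to $\e\int_0^T|\Delta u|^2dt$, upper bound the measure-perturbation via (H5), and close the inequality through the standard forward SDE and backward BSDE a priori estimates. The smallness of $L_mC_f^{-1}$ will be used to absorb the conditional-distribution perturbation with a constant that is $\gamma$-uniform.

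Let $\Theta^i=(X^i,u^i,p^i,q^i,\tilde q^i)$ with $m^i_t=\lr(X^i_t|\tilde{\f}_t)$ denote the two extended solutions, set $\Delta=\Theta^1-\Theta^2$ componentwise, and write $\Delta b_0=b_0(t,m^1_t)-b_0(t,m^2_t)$ (and analogously $\Delta\sigma_0,\Delta\tilde\sigma_0,\Delta g_x,\Delta f_{0u},\Delta f_{0x},\Delta f_{1x}$). By (H1), $\Delta b=b_1\Delta X+b_2\Delta u+\Delta b_0$ and $\Delta H_x=b_1\Delta p+\sigma_1\Delta q+\tilde\sigma_1\Delta\tilde q+\Delta f_{0x}+\Delta f_{1x}$. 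Applying Itô to $\Delta X_t\Delta p_t$ and integrating on $[0,T]$, the $b_1,\sigma_1,\tilde\sigma_1$-linear contributions cancel between the forward and the $\Delta H_x$ parts; the first-order condition $b_2p+\sigma_2q+\tilde\sigma_2\tilde q+f_{0u}=0$ yields $\Delta u\,(b_2\Delta p+\sigma_2\Delta q+\tilde\sigma_2\Delta\tilde q)=-\Delta u\,\Delta f_{0u}$. Using $\Delta p_T=\gamma\Delta g_x+\Delta\ii^g_T$ we arrive at the duality identity
\begin{align*}
&\gamma\e[\Delta X_T\,\Delta g_x]+\gamma\e\!\!\int_0^T\!\!\bigl(\Delta u\,\Delta f_{0u}+\Delta X\,\Delta f_{0x}+\Delta X\,\Delta f_{1x}\bigr)dt\\
&\quad=\gamma\e\!\!\int_0^T\!\!\bigl(\Delta b_0\,\Delta p+\Delta\sigma_0\,\Delta q+\Delta\tilde\sigma_0\,\Delta\tilde q\bigr)dt+\e[\Delta\xi\,\Delta p_0]-\e[\Delta X_T\,\Delta\ii^g_T]\\
&\qquad+\e\!\!\int_0^T\!\!\bigl(\Delta p\,\Delta\ii^b-\Delta X\,\Delta\ii^f+\Delta\ii^\sigma\,\Delta q+\Delta\ii^{\tilde\sigma}\,\Delta\tilde q\bigr)dt.
\end{align*}

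The convexity in (H4) bounds the $\Delta f_{0u},\Delta f_{0x}$ piece from below by $2C_f|\Delta u|^2$, and (H6)---applied to the conditional joint law of $(X^1_t,X^2_t)$ given $\tilde{\f}_t$, which almost surely has marginals $m^1_t,m^2_t$---makes both $\e\int_0^T\Delta X\,\Delta f_{1x}dt$ and $\e[\Delta X_T\,\Delta g_x]$ nonnegative. Hence the left-hand side is bounded below by $2\gamma C_f\e\int_0^T|\Delta u|^2dt$. On the right, (H5) combined with the pointwise inequality $W_2(m^1_t,m^2_t)^2\le\e[|\Delta X_t|^2|\tilde{\f}_t]$ dominates the perturbation by $C\gamma L_m\|\Delta\Theta\|_{\mathbb{S}}^2$, while Cauchy--Schwarz and Young bound the input cross terms by $\varepsilon\|\Delta\Theta\|_{\mathbb{S}}^2+C_\varepsilon(\e|\Delta\xi|^2+\|\Delta\ii\|_{\iii}^2)$. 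The standard SDE and BSDE a priori estimates, applied to $\Delta X$ and $(\Delta p,\Delta q,\Delta\tilde q)$ using the Lipschitz continuity of $\hat u$ from Lemma~\ref{lemma:u} and the $\gamma$-uniform bounds on the coefficients, yield
\[
\|\Delta\Theta\|_{\mathbb{S}}^2\le C\bigl(\e|\Delta\xi|^2+\e\!\int_0^T\!|\Delta u|^2dt+\|\Delta\ii\|_{\iii}^2\bigr),
\]
with $C=C(L,T,C_f)$ independent of $\gamma$. Substituting this back into the identity, choosing $\varepsilon$ small, and taking $\delta$ small enough that $\gamma L_m C$ is absorbed into $\gamma C_f$ on the left, one obtains $\e\int_0^T|\Delta u|^2dt\le C(\e|\Delta\xi|^2+\|\Delta\ii\|_{\iii}^2)$; re-inserting this into the a priori estimate gives the stated bound.

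The main obstacle is the absorption step, which must produce a constant uniform in $\gamma\in[0,1]$. The delicate point is that the raw Hu--Peng inequality $2\gamma C_f\,\|\Delta u\|^2\le\gamma L_mC\|\Delta\Theta\|_{\mathbb{S}}^2+\text{inputs}$ degenerates as $\gamma\downarrow 0$; one rescues uniformity by noting that the a priori estimates actually yield $\|\Delta\Theta\|_{\mathbb{S}}^2\le C(D+\gamma^2\e\int|\Delta u|^2dt)$ (the $\gamma$ coming from the fact that all nonlinear coefficients in $\mathcal{E}(\gamma,\xi,\ii)$ are scaled by $\gamma$), so that for small $\gamma$ the Lipschitz-in-$(x,p,q,\tilde q)$ property of $\hat u$ closes the bound directly, while for $\gamma$ bounded away from zero the Hu--Peng identity closes it, with the threshold being fixed by $(L,T,C_f)$. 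A second subtlety is the passage from the unconditional formulation of (H6) to the conditional version required in the identity, handled by the regular-conditional-probability representation of the joint law of $(X^1_t,X^2_t)$ given $\tilde{\f}_t$.
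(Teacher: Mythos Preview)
Your proposal is correct and follows essentially the same route as the paper: It\^o's formula on $\Delta X_t\,\Delta p_t$, the optimality relation \eqref{remark:1} to convert $(b_2\Delta p+\sigma_2\Delta q+\tilde\sigma_2\Delta\tilde q)\Delta u$ into $-\Delta f_{0u}\,\Delta u$, strict convexity (H4) for the gain $2C_f|\Delta u|^2$, weak monotonicity (H6) conditioned on $\tilde{\f}_t$ to discard the $f_{1x}$ and $g_x$ contributions, (H5) plus Young for the measure and input cross terms, and closure against the SDE/BSDE a priori estimates.

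The one place where the paper proceeds differently is the $\gamma$-uniformity, and here your two-regime split is unnecessary. The paper substitutes the a priori bounds
\[
\e\Big[\sup_t|\Delta X_t|^2\Big]\le C_1\Big(\e|\Delta\xi|^2+\gamma\!\int_0^T\!\!|\Delta u|^2+\|\Delta\ii\|_{\iii}^2\Big),\qquad
\e\Big[\sup_t|\Delta p_t|^2+\!\int_0^T\!\!|(\Delta q,\Delta\tilde q)|^2\Big]\le C_2\Big(\gamma\sup_t|\Delta X_t|^2+\gamma\!\int_0^T\!\!|\Delta u|^2+\cdots\Big)
\]
(both carrying a factor $\gamma$ in front of $\int|\Delta u|^2$ precisely because every coefficient in $\mathcal E(\gamma,\xi,\ii)$ is $\gamma$-scaled) \emph{directly} into the duality inequality. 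Then the $\varepsilon$-Young terms as well as the $L_m$ terms on the right all produce $\gamma\e\!\int|\Delta u|^2$, matching the $2\gamma C_f\e\!\int|\Delta u|^2$ on the left, so one obtains $\gamma\e\!\int|\Delta u|^2\le C(\e|\Delta\xi|^2+\|\Delta\ii\|_{\iii}^2)$ in a single pass with $\delta=\delta(L,T)$. Plugging this back into the a priori estimates and then using the Lipschitz continuity of $\hat u$ gives the bound on $\|\Delta\Theta\|_{\mathbb S}$. Your own observation that the a priori estimates scale with $\gamma$ is exactly the key; just feed it straight into the Hu--Peng inequality rather than splitting into small-$\gamma$ and large-$\gamma$ cases.
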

	
	We now give the following lemma, which plays a crucial role in the proof of Theorem~\ref{main1_thm}.
	
	\begin{lemma}\label{main1_lem2}
		Suppose that assumptions (H1)-(H6) hold. There exist $\delta>0$ depending only on $(L,T)$ and $\eta_0>0$ such that,  if $L_mC_f^{-1}\le\delta$ and $(\mathcal{S}_{\gamma})$ holds true for some $\gamma\in[0,1)$, then $(\mathcal{S}_{\gamma+\eta})$ holds true for any $\eta\in(0,\eta_0]$ satisfying $\gamma+\eta\le 1$.
	\end{lemma}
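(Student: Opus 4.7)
The plan is to use the classical continuation-of-coefficients idea of Hu--Peng: show that the solvability property $(\mathcal{S}_\gamma)$ propagates in a uniform step $\eta_0$, so that iterating finitely many times covers $[0,1]$. Given data $(\xi,\ii)\in\lr_{\f_0}^2\times\iii$, for a candidate extended process $\Theta'=(X',u',m',p',q',\tilde q')\in\s$ with $m'_t=\lr(X'_t|\tilde\f_t)$, define a perturbed input
\begin{equation*}
\widetilde{\ii}^{\phi}_t := \ii^{\phi}_t + \eta\,\phi(t,\theta'_t)\ \text{for}\ \phi=b,\sigma,\tilde\sigma,\qquad \widetilde{\ii}^f_t := \ii^f_t+\eta H_x(t,\Theta'_t),\qquad \widetilde{\ii}^g_T := \ii^g_T+\eta g_x(X'_T,m'_T),
\end{equation*}
and let $\Phi(\Theta')$ be the (unique, by $(\mathcal{S}_\gamma)$) extended solution of $\mathcal{E}(\gamma,\xi,\widetilde\ii)$. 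A fixed point of $\Phi$ is exactly an extended solution of $\mathcal{E}(\gamma+\eta,\xi,\ii)$, since the terms $\eta\phi(t,\theta_t)$, $\eta H_x(t,\Theta_t)$ and $\eta g_x(X_T,m_T)$ just reassemble into the $(\gamma+\eta)$-coefficients, and the consistency $m_t=\lr(X_t|\tilde\f_t)$ is preserved at the fixed point.

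The contraction is verified by combining the stability estimate in Lemma~\ref{main1_lem1} with the Lipschitz bounds on the coefficients. For two candidates $\Theta'^1,\Theta'^2$ with $\Theta^i=\Phi(\Theta'^i)$, Lemma~\ref{main1_lem1} applied at parameter $\gamma$ with common initial condition $\xi$ gives
\begin{equation*}
\|\Theta^1-\Theta^2\|_{\s}\le C\,\|\widetilde{\ii}^1-\widetilde{\ii}^2\|_{\iii},
\end{equation*}
with $C$ independent of $\gamma$. Now (H1), (H3), (H5) yield pointwise estimates of the form
\begin{equation*}
|b(t,\theta'^1_t)-b(t,\theta'^2_t)|\le L|X'^1_t-X'^2_t|+L|u'^1_t-u'^2_t|+L_m W_2(m'^1_t,m'^2_t),
\end{equation*}
and analogously for $\sigma,\tilde\sigma,H_x,g_x$; since $m'^i_t=\lr(X'^i_t|\tilde\f_t)$, one has $W_2(m'^1_t,m'^2_t)^2\le\e[|X'^1_t-X'^2_t|^2\,|\,\tilde\f_t]$, so after taking expectations the Wasserstein terms are absorbed into $\e[|X'^1_t-X'^2_t|^2]$. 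Together with the Lipschitz estimate for $\hat u$ from Lemma~\ref{lemma:u}, this gives $\|\widetilde{\ii}^1-\widetilde{\ii}^2\|_{\iii}\le C'\eta\,\|\Theta'^1-\Theta'^2\|_{\s}$ for a universal constant $C'$ depending only on $L$ and $T$. Hence $\|\Phi(\Theta'^1)-\Phi(\Theta'^2)\|_{\s}\le CC'\eta\,\|\Theta'^1-\Theta'^2\|_{\s}$, and choosing $\eta_0=1/(2CC')$ makes $\Phi$ a strict contraction on $\s$ for every $\eta\in(0,\eta_0]$; the Banach fixed point theorem then furnishes a unique extended solution of $\mathcal{E}(\gamma+\eta,\xi,\ii)$, establishing $(\mathcal{S}_{\gamma+\eta})$.

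The main obstacle is to keep all constants $C,C',\eta_0$ independent of $\gamma$; this is precisely what Lemma~\ref{main1_lem1} (whose conclusion is already stated with a $\gamma$-free constant) is designed for, and it is the reason the smallness condition $L_m C_f^{-1}\le\delta$ has to be imposed here as well. The only subtle point beyond bookkeeping is the handling of the conditional-law argument in the coefficients: the estimate $W_2(\lr(X'^1_t|\tilde\f_t),\lr(X'^2_t|\tilde\f_t))^2\le\e[|X'^1_t-X'^2_t|^2|\tilde\f_t]$ uses the conditional coupling $(X'^1_t,X'^2_t)$ itself, which is legitimate since both $m'^i_t$ are conditional laws on the same probability space, and after taking $\e[\,\cdot\,]$ the $\tilde\f_t$-conditional expectation disappears, leaving a clean bound in the $\s$-norm.
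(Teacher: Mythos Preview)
Your proposal is correct and follows essentially the same route as the paper: define $\Phi$ by freezing a candidate $\Theta'$ into the $\eta$-perturbation of the input, invoke $(\mathcal{S}_\gamma)$ to produce the image, then use Lemma~\ref{main1_lem1} together with the Lipschitz bounds on $b,\sigma,\tilde\sigma,H_x,g_x$ to get a contraction with constant $CC'\eta$ independent of $\gamma$. Your write-up is in fact slightly more detailed than the paper's (you spell out the Wasserstein-to-conditional-expectation step and the $C'$ bound explicitly), but the argument is the same; the only superfluous remark is the appeal to Lemma~\ref{lemma:u}, since $u'^1,u'^2$ are free components of $\Theta'^1,\Theta'^2\in\s$ and their difference is already part of $\|\Theta'^1-\Theta'^2\|_{\s}$.
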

	
	\begin{proof}
		The proof follows from the contraction of Picard's mapping. Consider $\gamma$ such that $(\mathcal{S}_{\gamma})$ holds true. For $\eta>0$, any $\xi\in \lr_{\f_0}^2$ and any $\ii\in\iii$, we aim to show that the FBSDE $\mathcal{E}(\gamma+\eta,\xi,\ii)$ has a unique extended solution in $\mathbb{S}$. To do so, we define a map $\Phi:\mathbb{S}\to\mathbb{S}$, whose fixed points are solutions of $\mathcal{E}(\gamma+\eta,\xi,\ii)$. 
		
		The definition $\Phi$ is as follows. Given a process $\Theta\in\mathbb{S}$, we denote by $\Theta'$ the extended solition of the FBSDE $\mathcal{E}(\gamma,\xi,\ii')$ with
		\begin{equation*}
			\begin{split}
				&\ii_t^{b,'}=\eta b(t,\theta_t)+\ii_t^b,\quad t\in[0,T];\\
				&\ii_t^{\sigma,'}=\eta\sigma(t,\theta_t)+\ii_t^{\sigma},\quad t\in[0,T];\\
				&\ii_t^{\tilde{\sigma},'}=\eta\sigma(t,\theta_t)+\ii_t^{\tilde{\sigma}},\quad t\in[0,T];\\
				&\ii_t^{f,'}=\eta H_x(t,\Theta_t)+\ii_t^f,\quad t\in[0,T];\\
				&\ii_T^{g,'}=\eta g_x(X_T,m_T)+\ii_T^g.
			\end{split}
		\end{equation*}
		From the assumption that $(\mathcal{S}_{\gamma})$ holds for true, $\Theta'$ is uniquely defined, and it belongs to $\mathbb{S}$, so that $\Phi:\Theta\mapsto\Theta'$ maps $\mathbb{S}$ into itself. It is then clear that a process $\Theta\in\mathbb{S}$ is a fixed point of $\Phi$ if and only if $\Theta$ is an extended solution of $\mathcal{E}(\gamma+\eta,\xi,\ii)$. So we only need to illustrate that $\Phi$ is a contraction when $\eta$ is small enough. 
		
		In fact, for any $\Theta^1,\Theta^2\in\mathbb{S}$, we know from Lemma~\ref{main1_lem1} that
		\begin{equation*}
			\|\Phi(\Theta^{2})-\Phi(\Theta^1)\|_{\mathbb{S}}\le C\|\ii'^{2}-\ii'^{1}\|_{\iii}\le C\eta\|\Theta^2-\Theta^2\|_{\mathbb{S}},
		\end{equation*}
		where $C$ is independent of $\gamma$ and $\eta$. So when $\eta$ is small enough, $\Phi$ is indeed a contraction.
	\end{proof}
	
	\emph{Proof of Theorem~\ref{main1_thm}.}
	In view of Lemma~\ref{main1_lem2}, we only need to prove that $(\mathcal{S}_0)$ holds true, which is obviously true since there is no coupling between the forward and the backward equations when $\gamma=0$.
	\endproof

	\section{Solvability of FBSDE \eqref{fbsde1}: Method Two}\label{M2}
	
	In this section, we prove the existence and uniqueness of the solution of FBSDE \eqref{fbsde1} with an alternative method. In the first subsection, we use the weak monotonicity assumption to deduce the uniqueness result. And in the second subsection, we first show the existence result on a small time interval $[\tau,T]$ and then extend the local solution to the whole time interval $[0,T]$. More assumptions are required than the first method. However, the intermediate result can better demonstrate the probabilistic properties as well as the sensitivity, which are worthy of study.
	
	\subsection{Uniqueness}
	We have the following uniqueness of the solution of FBSDE \eqref{fbsde1}.
	\begin{theorem}[Uniqueness] Suppose that (H1)-(H6) hold and $\xi_0\in\mathcal{L}_{{\mathscr{F}}_0}^2$. There exists  $\delta>0$ depending only on $(L,T)$ such that FBSDE \eqref{fbsde1} has at most one solution satisfying \eqref{space} when $L_mC_f^{-1}\le\delta$.
	\end{theorem}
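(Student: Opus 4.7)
Let $(X^i, p^i, q^i, \tilde{q}^i)_{i=1,2}$ be two solutions of FBSDE \eqref{fbsde1} satisfying \eqref{space}, and write $\Delta X = X^1-X^2$, and similarly $\Delta p$, $\Delta q$, $\Delta\tilde q$, $\Delta u = u^1-u^2$ with $u^i_t = \hat u(t,X^i_t,p^i_t,q^i_t,\tilde q^i_t)$, $m^i_t = \lr(X^i_t|\tilde{\f}_t)$. My plan is to apply Itô's product rule to $\Delta X_t\,\Delta p_t$, integrate over $[0,T]$, take expectation, and exploit the cancellations afforded by the linear-convex structure (H1) together with the first-order optimality condition \eqref{remark:1}. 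Because $b, \sigma, \tilde{\sigma}$ are linear in $x,u$ with deterministic $(b_i,\sigma_i,\tilde{\sigma}_i)_{i=1,2}$ and $m$-dependent intercepts $(b_0,\sigma_0,\tilde{\sigma}_0)$, the $b_1,\sigma_1,\tilde\sigma_1$ terms cancel between the backward drift and the forward--backward covariation, while subtracting \eqref{remark:1} for the two solutions yields $b_2\Delta p+\sigma_2\Delta q+\tilde\sigma_2\Delta\tilde q=-\Delta f_{0u}$, reducing the remaining $b_2,\sigma_2,\tilde\sigma_2$ contributions to $-\Delta u\,\Delta f_{0u}$.

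After this bookkeeping the identity reads
\begin{equation*}
\e[\Delta X_T\,\Delta g_x] + \e\int_0^T \{\Delta X(\Delta f_{0x}+\Delta f_{1x}) + \Delta u\,\Delta f_{0u}\}dt = \e\int_0^T \{\Delta b_0\,\Delta p + \Delta\sigma_0\,\Delta q + \Delta\tilde\sigma_0\,\Delta\tilde q\}dt.
\end{equation*}
By (H4) the bracket on the left dominates $2C_f|\Delta u|^2$ pointwise, and by (H6), applied conditionally on $\tilde{\f}_t$ to the coupling $(X^1_t,X^2_t)$ whose conditional marginals are exactly $m^1_t$ and $m^2_t$, both $\e[\Delta X_t\,\Delta f_{1x}]\ge 0$ and $\e[\Delta X_T\,\Delta g_x]\ge 0$. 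Dropping these nonnegative contributions would produce the master estimate
\begin{equation*}
2C_f\,\e\int_0^T |\Delta u_t|^2\,dt \le \e\int_0^T \{\Delta b_0\,\Delta p + \Delta\sigma_0\,\Delta q + \Delta\tilde\sigma_0\,\Delta\tilde q\}dt.
\end{equation*}

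To close the loop I would then invoke (H5): $|(\Delta b_0,\Delta\sigma_0,\Delta\tilde\sigma_0)|\le 3L_m W_2(m^1_t,m^2_t)$ and $W_2(m^1_t,m^2_t)^2\le\e[|\Delta X_t|^2|\tilde{\f}_t]$. A Gronwall argument on the linear SDE for $\Delta X$ (with $\Delta X_0=0$ and $L_m\le L$) gives $\|\Delta X\|_{\mathcal{S}^2}^2 \le C(L,T)\|\Delta u\|_{\lr^2}^2$, and the standard a priori estimate for the linear BSDE satisfied by $(\Delta p,\Delta q,\Delta\tilde q)$, with $L$-Lipschitz driver and terminal $\Delta g_x$, gives $\|\Delta p\|_{\mathcal{S}^2}^2 + \|\Delta q\|_{\lr^2}^2 + \|\Delta\tilde q\|_{\lr^2}^2 \le C(L,T)\|\Delta u\|_{\lr^2}^2$. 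Plugging these into the master estimate via Cauchy--Schwarz yields $2C_f\|\Delta u\|_{\lr^2}^2 \le L_m\,C(L,T)\|\Delta u\|_{\lr^2}^2$, so a threshold $\delta$ depending only on $(L,T)$ ensures that $L_mC_f^{-1}\le\delta$ forces $\Delta u=0$, after which $\Delta X\equiv 0$ and $(\Delta p,\Delta q,\Delta\tilde q)\equiv 0$ follow from the stability bounds. The delicate step is the Itô bookkeeping of the first paragraph, together with the verification that (H6), stated in joint-distribution form, transfers correctly to the conditional-law setting by conditioning on $\tilde{\f}_t$; it is also essential that the SDE and BSDE constants be of the form $C(L,T)$ independent of $C_f$, so that the smallness of $L_mC_f^{-1}$ can absorb the right-hand side of the master inequality.
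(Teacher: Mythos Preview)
Your proposal is correct and follows essentially the same route as the paper: apply It\^o's formula to $\Delta X_t\,\Delta p_t$, exploit the linear structure and the optimality condition to reduce to $-\Delta u\,\Delta f_{0u}-\Delta X\,\Delta f_{0x}$, use (H4) for the $2C_f|\Delta u|^2$ lower bound, discard the $f_{1x}$ and terminal terms via (H6) applied conditionally on $\tilde{\f}_t$, bound the residual $\Delta b_0,\Delta\sigma_0,\Delta\tilde\sigma_0$ terms by (H5), and close with the standard SDE/BSDE stability estimates whose constants depend only on $(L,T)$. The only cosmetic difference is that the paper uses Young's inequality where you invoke Cauchy--Schwarz, and it states the BSDE estimate with $\|\Delta X\|$ on the right before substituting the SDE bound, whereas you state the combined inequality directly.
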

	
	\begin{proof}
		Let $\{(\hat{X}_t^i,\hat{p}_t^i,\hat{q}_t^i,\hat{\tilde{q}}_t^i),0\le t\le T\}$ solve FBSDE \eqref{fbsde1} such that \eqref{space} holds with initial $\xi_0\in\mathcal{L}_{\mathscr{F}_0}^2$, for $i=1,2$. We set
		\begin{equation*}
			m^i_t=\mathcal{L}(\hat{X}^i_t|\tilde{\mathscr{F}}_t),\quad i=1,2.
		\end{equation*}
		Recall that $\hat{u}^1_t$ and $\hat{u}^2_t$ satisfy
		\begin{equation}\label{uni1'}
			b_2(t)\hat{p}_t^i+\sigma_2(t)\hat{q}_t^i+\tilde{\sigma}_2(t)\hat{\tilde{q}}_t^i=-f_u(t,\hat{X}^i_t,\hat{u}_t^i),\quad t\in[0,T],\quad i=1,2.
		\end{equation}
		Let $\Delta\hat{X}_t=\hat{X}^2_t-\hat{X}^1_t$ and $\Delta\hat{p}_t=\hat{p}^2_t-\hat{p}^1_t$ for $t\in[0,T]$. By using Itô's lemma for $\Delta\hat{p}_t\Delta\hat{X}_t$, we get
		\begin{equation}\label{uni1}
			\begin{split}
				&\e[\Delta\hat{p}_T\Delta\hat{X}_T]-\Delta\hat{p}_0\Delta\hat{X}_0\\
				&=\e\big[\int_0^T (b_0(t,\hat{m}_t^2)-b_0(t,\hat{m}_t^1))\Delta\hat{p}_t+(\sigma_0(t,\hat{m}_t^2)-\sigma_0(t,\hat{m}_t^1))\Delta\hat{q}_t\\
				&\qquad\quad+(\tilde{\sigma}_0(t,\hat{m}_t^2)-\tilde{\sigma}_0(t,\hat{m}_t^1))\Delta\hat{\tilde{q}}_t+(b_2(t)\Delta\hat{p}_t+\sigma_2(t)\Delta\hat{q}_t+\tilde{\sigma}_2(t)\Delta\hat{\tilde{q}}_t)\Delta\hat{u}_t\\
				&\qquad\quad-\big(f_{0x}(t,\hat{X}_t^2,\hat{u}_t^2)-f_{0x}(t,\hat{X}_t^1,\hat{u}_t^1)\big)\Delta\hat{X}_t-\big(f_{1x}(t,\hat{X}_t^2,\hat{m}_t^2)-f_{1x}(t,\hat{X}_t^1,\hat{m}_t^1)\big)\Delta\hat{X}_t dt\big].  
			\end{split}
		\end{equation}
		We know from \eqref{uni1'} that 
		\begin{equation}\label{uni2}
			\begin{split}
				&(b_2(t)\Delta\hat{p}_t+\sigma_2(t)\Delta\hat{q}_t+\tilde{\sigma}_2(t)\Delta\hat{\tilde{q}}_t)\Delta\hat{u}_t-\big(f_{0x}(t,\hat{X}_t^2,\hat{u}_t^2)-f_{0x}(t,\hat{X}_t^1,\hat{u}_t^1)\big)\Delta\hat{X}_t\\
				&=-[(f_{0x},f_{0u})(t,\hat{X}_t^2,\hat{u}_t^2)-(f_{0x},f_{0u})(t,\hat{X}_t^1,\hat{u}_t^1)]\cdot (	\Delta \hat{X}_t,	\Delta \hat{u}_t),\quad t\in[0,T].
			\end{split}
		\end{equation}
		From the strict convexity of $f_0$ as assumed in (H4), we have that for $t\in[0,T]$,
		\begin{equation}\label{mp3}
			\begin{aligned}
				&f_0(t,\hat{X}_t^2,\hat{u}_t^2)-f_0(t,\hat{X}^1_t,\hat{u}^1_t)-[f_{0x}(t,\hat{X}^1_t,\hat{u}^1_t)\Delta \hat{X}_t+f_{0u}(t,\hat{X}^1_t,\hat{u}^1_t)\Delta \hat{u}_t]\geq C_f\Delta \hat{u}_t|^2;\\
				&f_0(t,\hat{X}^1_t,\hat{u}^1_t)-f_0(t,\hat{X}_t^2,\hat{u}_t^2)+[f_{0x}(t,\hat{X}_t^2,\hat{u}_t^2)\Delta \hat{X}_t+f_{0u}(t,\hat{X}_t^2,\hat{u}_t^2)\Delta \hat{u}_t]\geq C_f|\Delta u_t|^2.
			\end{aligned}
		\end{equation}
		From \eqref{mp3}, we have
		\begin{equation}\label{uni5}
			[(f_{0x},f_{0u})(t,\hat{X}_t^2,\hat{u}_t^2)-(f_{0x},f_{0u})(t,\hat{X}_t^1,\hat{u}_t^1)]\cdot (	\Delta \hat{X}_t,	\Delta \hat{u}_t)\geq 2C_f|\Delta\hat{u}_t|^2,\quad t\in[0,T].
		\end{equation}
		From the weak monotonicity assumption (H6), we know that
		\begin{equation}\label{uni7}
			\begin{split}
				&\e[\Delta\hat{p}_T\Delta\hat{X}_T]=\e[\big(g_x(\hat{X}_T^2,\hat{m}_T^2)-g_x(\hat{X}_T^1,\hat{m}_T^1)\big)\Delta\hat{X}_T]\geq 0;\\
				&\e[\big(f_{1x}(t,\hat{X}_t^2,\hat{m}_t^2)-f_{1x}(t,\hat{X}_t^1,\hat{m}_t^1)\big)\Delta\hat{X}_t]\geq 0,\quad t\in[0,T].
			\end{split}
		\end{equation}
		Plugging \eqref{uni2}, \eqref{uni5} and  \eqref{uni7} into \eqref{uni1}, using the Lipschitz continuity assumption (H5) and the average inequality, we have
		\begin{align}
			2C_f\e\big[\int_0^T|\Delta\hat{u}_t|^2\big]&\le\e\big[\int_0^T (b_0(t,\hat{m}_t^2)-b_0(t,\hat{m}_t^1))\Delta\hat{p}_t+(\sigma_0(t,\hat{m}_t^2)-\sigma_0(t,\hat{m}_t^1))\Delta\hat{q}_t\notag\\
			&\qquad\qquad+(\tilde{\sigma}_0(t,\hat{m}_t^2)-\tilde{\sigma}_0(t,\hat{m}_t^1))\Delta\hat{\tilde{q}}_tdt\big]\notag\\
			&\le \frac{3L_mT}{2}\e[\sup_{0\le t\le T}|\Delta\hat{X}_t|^2]+\frac{L_m}{2}\e\big[\int_0^T |\Delta\hat{p}_t|^2+|\Delta\hat{q}_t|^2+|\Delta\hat{\tilde{q_t}}|^2 dt\big],\label{uni7.5}
		\end{align}
		where we have used the following estimates
		\begin{equation*}
			\e[W_2(\hat{m}_t^1,\hat{m}_t^2)^2]\le \e\big[\e[|\Delta \hat{X}_t|^2|\tilde{\f}_t]\big]=\e[|\Delta \hat{X}_t|^2]\le \e[\sup_{0\le t\le T}|\Delta \hat{X}_t|^2],\quad t\in[0,T].
		\end{equation*}
		By standard estimates for SDEs and BSDEs, there exist two constants $C_1>0$ and $C_2>0$ depending only on $(L,T)$, such that
		\begin{align}
			&\e\big[\sup_{0\le t\le T}|\Delta\hat{X}_t|^2\big]\le C_1\e\big[\int_0^T|\Delta\hat{u}_t|^2dt\big];\label{uni8}\\
			&\e\big[\sup_{0\le t\le T}|\Delta\hat{p}_t|^2+\int_0^T|\Delta\hat{q}_t|^2+|\Delta\hat{\tilde{q}}_t|^2dt\big]  \le C_2\e\big[\sup_{0\le t\le T}|\Delta\hat{X}_t|^2+\int_0^T|\Delta\hat{u}_t|^2dt\big].\label{uni9}
		\end{align}
		From  \eqref{uni8}, \eqref{uni9} and \eqref{uni7.5}, we have
		\begin{equation*}
			4C_f\e\big[\int_0^T|\Delta\hat{u}_t|^2\big]\le L_m(C_2(1+C_1)(T+1)+3TC_1)\e\big[\int_0^T|\Delta\hat{u}_t|^2dt\big].
		\end{equation*}
		The constant 
		\begin{equation*}
			\delta:=2(C_2(1+C_1)(T+1)+3TC_1)^{-1}
		\end{equation*}
		depends only on $(L,T)$. If $L_mC_f^{-1}\le\delta$, then $\hat{u}^1=\hat{u}^2$ in $\lr_{\f}^2(0,T)$.
	\end{proof}
	
	\subsection{Existence}
	Next, we prove the existence result of the solution of FBSDE \eqref{fbsde1}. The idea is to show the existence result on a small time interval $[\tau,T]$ firstly and then extend the local solution to the whole time interval $[0,T]$. In this subsection, we always suppose that assumptions (H1)-(H6) hold. 
	
	The lemma below is an immediate consequence of \cite[Theorem 3]{SAWR}. Similar results can be found in \cite[Theorem 6.7]{NM} and \cite[Theorem 1.1]{AS}.
	\begin{lemma}\label{lem_m_x}
		Let $0\le s<\tau\le T$. Soppose that assumptions (H1), (H2) and (H5) hold. Given $\hat{u}\in \lr_{\f}^2(s,\tau)$ and $\eta\in\lr^2_{\f_s}$, there exists a unique solution $\{\hat{X}_t,s\le t\le \tau\}\in\sr^2_{\f}(s,\tau)$ to SDE:
		\begin{equation}\label{exix_hat}
			\hat{X}_t=\eta+\int_s^t b(r,\hat{X}_r,\hat{u}_r,\hat{m}_r) dr+\int_s^t \sigma(r,\hat{X}_r,\hat{u}_r,\hat{m}_r) dW_r+\int_s^t \tilde{\sigma}(r,\hat{X}_r,\hat{u}_r,\hat{m}_r) d\tilde{W}_r,\quad t\in[s,\tau],
		\end{equation}
		where $\hat{m}_t=\lr(\hat{X}_t|\tilde{\f}_t)$ for $t\in[s,\tau]$.
	\end{lemma}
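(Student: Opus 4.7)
The plan is to construct $\hat X$ as the fixed point of a Picard map on $\sr^2_\f(s,\tau)$. Given $X\in\sr^2_\f(s,\tau)$, I set $m^X_r:=\lr(X_r|\tilde\f_r)$, freeze it, and let $\Phi(X)\in\sr^2_\f(s,\tau)$ be the unique strong solution of the linear SDE obtained by replacing $\hat m_r$ in \eqref{exix_hat} by $m^X_r$. Under (H1) the frozen SDE has bounded deterministic $x$-coefficient $b_1$, a bounded linear input $b_2(r)\hat u_r\in\lr^2_\f(s,\tau)$, and a measure-dependent part $b_0(r,m^X_r)$ which is square-integrable because (H2) gives $|b_0(r,m)|^2\le 2L^2(1+\int x^2\,dm(x))$ and $\e[\int x^2\,dm^X_r(x)]=\e[X_r^2]<\infty$; the same applies to $\sigma$ and $\tilde\sigma$. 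Classical linear SDE theory then yields $\Phi(X)$ together with the usual $\sr^2$ estimate, so $\Phi$ is a well-defined self-map of $\sr^2_\f(s,\tau)$.

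The core step is to control $\Phi$ by iteration. For $X,Y\in\sr^2_\f(s,\tau)$, the linearity from (H1) reduces the coefficient increment in each of $b,\sigma,\tilde\sigma$ to the sum of $\phi_1(r)(\Phi(X)_r-\Phi(Y)_r)$ and $\phi_0(r,m^X_r)-\phi_0(r,m^Y_r)$. Combining BDG, Cauchy--Schwarz, and (H5) gives, for every $r\in[s,\tau]$,
\begin{equation*}
\e\bigl[\sup_{s\le t\le r}|\Phi(X)_t-\Phi(Y)_t|^2\bigr]\le C\int_s^r\e\bigl[\sup_{s\le u\le v}|\Phi(X)_u-\Phi(Y)_u|^2\bigr]\,dv+C\int_s^r\e\bigl[W_2(m^X_v,m^Y_v)^2\bigr]\,dv,
\end{equation*}
with $C=C(L,T)$; Gronwall's inequality absorbs the first term. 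The pair $(X_v,Y_v)$ is a conditional coupling of $(m^X_v,m^Y_v)$ given $\tilde\f_v$, so $W_2(m^X_v,m^Y_v)^2\le\e[|X_v-Y_v|^2|\tilde\f_v]$ a.s., whence $\e[W_2(m^X_v,m^Y_v)^2]\le\e[|X_v-Y_v|^2]$. Setting $\psi_n(r):=\e[\sup_{s\le t\le r}|\Phi^n(X)_t-\Phi^n(Y)_t|^2]$, I obtain $\psi_n(r)\le C'\int_s^r\psi_{n-1}(v)\,dv$, and iteration produces the usual factorial bound $\psi_n(\tau)\le(C'(\tau-s))^n/n!\cdot\psi_0(\tau)$. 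Hence $\Phi^n$ is a strict contraction on $\sr^2_\f(s,\tau)$ for $n$ large, and the Banach fixed-point theorem delivers a unique $\hat X\in\sr^2_\f(s,\tau)$ solving \eqref{exix_hat}.

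The main obstacle I anticipate is conceptual rather than computational: because $\hat m_r=\lr(\hat X_r|\tilde\f_r)$ depends on $\hat X_r$ through a nonlinear operation on random variables, the classical Lipschitz-in-$x$ argument does not close the scheme on its own. The decisive ingredient is the conditional coupling inequality $W_2(\lr(X|\GCal),\lr(Y|\GCal))^2\le\e[|X-Y|^2|\GCal]$, which converts the Wasserstein perturbation furnished by (H5) into an $L^2$ control on the state and lets the Gronwall loop close exactly as in the distribution-free case. Once this observation is in place, the remainder is a routine fixed-point argument based on (H1), (H2), (H5) and standard BDG/Gronwall estimates.
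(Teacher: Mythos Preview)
Your Picard-iteration argument is correct and is precisely the standard route for conditional McKean--Vlasov SDEs: freeze the conditional law, solve the resulting linear SDE via (H1)--(H2), and close the contraction loop using (H5) together with the conditional coupling bound $W_2(\lr(X|\tilde\f_r),\lr(Y|\tilde\f_r))^2\le\e[|X_r-Y_r|^2\mid\tilde\f_r]$. The paper does not give its own proof of this lemma but simply invokes \cite[Theorem~3]{SAWR} (with \cite{NM,AS} as alternatives), and those references establish the result by exactly the fixed-point scheme you outline, so your approach coincides with the one the paper defers to.
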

	
	We now construct a map so that it has a fixed point as a solution to FBSDE \eqref{fbsde1}. Let $0\le s<\tau\le T$, $\eta\in\lr_{\f_s}^2$ and $v:\br\times\lr^2_{\f_\tau}\times\Omega\to\br$ satisfies, for $\mathbb{P}$-a.s.,
	\begin{align}
		&(v(x',\xi,\omega)-v(x,\xi,\omega))(x'-x)\geq 0,\quad x,x'\in\br,\quad \xi,\xi'\in\mathcal{L}_{\mathscr{F}_\tau}^2;\label{exiv1}\\
		&|v(x',\xi',\omega)-v(x,\xi,\omega)|^2\le C_v\big[|x'-x|^2+\e[(\xi'-\xi)^2|\tilde{\mathscr{F}}_\tau](\omega)\big],\quad x,x'\in\br,\quad \xi,\xi'\in\mathcal{L}_{\mathscr{F}_\tau}^2,\label{exiv2}
	\end{align}
	and 
	\begin{equation}\label{exiv0}
		\e[|v(0,0,\cdot)|^2]<+\infty.
	\end{equation}
	We define $\Phi^{s,\tau,\eta,v}: \lr_{\f}^2(s,\tau)\to\lr_{\f}^2(s,\tau)$ as follows. Given $\hat{u}\in \lr_{\f}^2(s,\tau)$, Lemma~\ref{lem_m_x} shows that SDE \eqref{exix_hat} has a unique solution $\hat{X}=\{\hat{X}_t,s\le t\le \tau\}\in\sr^2_{\f}(s,\tau)$ with initial value $\eta\in\lr^2_{\f_s}$. We set $\hat{m}_t=\lr(\hat{X}_t|\tilde{\f}_t)$ for $t\in[s,\tau]$ and denote by $\{(X_t,p_t,q_t,\tilde{q}_t),s\le t\le T\}$ the solution of the following FBSDE
	\begin{equation}\label{exix}
		\left\{
		\begin{aligned}
			&dX_t=b(t,X_t,u_t,\hat{m}_t)dt+{\sigma}(t,X_t,u_t,\hat{m}_t)dW_t+\tilde{\sigma}(t,X_t,u_t,\hat{m}_t)d\tilde{W}_t,\quad t\in(s,\tau];\\
			&dp_t=-\partial_x H(t,X_t,p_t,q_t,\tilde{q}_t,u_t,\hat{m}_t)dt+q_tdW_t+\tilde{q}_td\tilde{W}_t,\quad t\in[s,\tau);\\
			&X_s=\eta,\quad p_\tau=v(X_\tau,\hat{X}_\tau),
		\end{aligned}
		\right.
	\end{equation}
	with the optimality condition 
	\begin{equation*}
		u_t=\hat{u}(t,X_t,p_t,q_t,\tilde{q}_t),\quad t\in[s,\tau],
	\end{equation*}
	or equivalently,
	\begin{equation*}
		b_2(t)p_t+\sigma_2(t)q_t+\tilde{\sigma}_2(t)\tilde{q}_t+f_{0u}(t,X_t,u_t)=0,\quad t\in[s,\tau],
	\end{equation*}
	such that 
	\begin{equation}\label{define_estimate}
		\begin{split}
			&\e\big[\sup_{s\le t\le \tau}|(X_t,p_t)|^2+\int_s^{\tau}|(q_t,\tilde{q}_t)|^2)dt]\big]<+\infty. 
		\end{split}
	\end{equation}
	We set 
	\begin{equation*}
		\Phi^{s,\tau,\eta, v}(\hat{u}):=u=(u_t)_{s\le t\le\tau}.
	\end{equation*}
	Conditions \eqref{exiv1}-\eqref{exiv0} and Theorem~\ref{thm:mp'} ensure that $u$ is uniquely defined. Moreover, both inequalities \eqref{define_estimate} and \eqref{bound} and Lemma~\ref{lemma:u} yield that $u\in\lr_{\f}^2(s,\tau)$. Thus, $\Phi^{s,\tau,\eta, v}:\hat{u}\mapsto u$ maps $\lr_{\f}^2(s,\tau)$ into itself. Furthermore, the fixed point of $\Phi^{0,T,\xi_0,g_x}$ is the solution of FBSDE \eqref{fbsde1}. The lemma below gives a solution on a small time interval.
	
	\begin{lemma}\label{exi_lem1}
		Let $0<\tau\le T$ and $v:\br \times\lr^2_{\f_\tau}\times \Omega\to\br$ satisfies \eqref{exiv1}-\eqref{exiv0}. Suppose that assumptions (H1)-(H5) hold. 
		If 
		\begin{equation*}
			B_uC_f^{-1}\le (24LC_v)^{-1},
		\end{equation*}
		then, there exists $\gamma>0$ depending on $(L,T,C_f,C_v)$ such that, for any non-negative $s\in[\tau-\gamma,\tau)$ and $\eta\in\lr_{\f_s}^2$, there exists $\hat{u}^{s,\tau,\eta,v}\in \lr_{\f}^2(s,\tau)$ such that
		\begin{equation*}
			\Phi^{s,\tau,\eta,v}(\hat{u}^{s,\tau,\eta,v})=\hat{u}^{s,\tau,\eta,v}.
		\end{equation*}
	\end{lemma}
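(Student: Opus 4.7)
The plan is to apply Banach's contraction principle to $\Phi^{s,\tau,\eta,v}$ on $\lr_{\f}^2(s,\tau)$: for $\tau-s$ small enough, I aim to show that $\|\Phi^{s,\tau,\eta,v}(\hat u^2)-\Phi^{s,\tau,\eta,v}(\hat u^1)\|_{\lr_{\f}^2(s,\tau)}\le k\|\hat u^2-\hat u^1\|_{\lr_{\f}^2(s,\tau)}$ with $k<1$. Given $\hat u^i\in\lr_{\f}^2(s,\tau)$, denote by $\hat X^i,\hat m^i,(X^i,p^i,q^i,\tilde q^i)$ and $u^i$ the associated quantities in the construction of $\Phi^{s,\tau,\eta,v}$, and write $\Delta$ for differences between indices $2$ and $1$. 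As a first block of estimates, applying standard SDE bounds to \eqref{exix_hat} together with (H1), (H5) and the inequality $\e[W_2^2(\hat m^2_t,\hat m^1_t)]\le \e[|\Delta\hat X_t|^2]$ yields, via Gronwall on $[s,\tau]$,
\begin{equation*}
\e\Big[\sup_{s\le t\le\tau}|\Delta\hat X_t|^2\Big]\le K_1\,\e\Big[\int_s^\tau|\Delta\hat u_t|^2 dt\Big],\quad \e\Big[\int_s^\tau|\Delta\hat X_t|^2 dt\Big]\le K_1(\tau-s)\e\Big[\int_s^\tau|\Delta\hat u_t|^2 dt\Big],
\end{equation*}
with $K_1=K_1(L,T)$.

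The core of the argument is an It\^o expansion of $\Delta p_t\Delta X_t$ on $[s,\tau]$, in the spirit of the uniqueness proof of Section~\ref{M2}. The $b_1,\sigma_1,\tilde\sigma_1$ cross-terms cancel; the $(b_2,\sigma_2,\tilde\sigma_2)$ terms combine with $\Delta u$ and, via the optimality condition, reduce to $-[\Delta f_{0x}\Delta X+\Delta f_{0u}\Delta u]$, bounded above by $-2C_f|\Delta u|^2$ thanks to the joint convexity of $f_0$ from (H4); the $\Delta f_{1x}$ contribution splits, by (H4), into a non-negative $x$-monotone part and a residual controlled by $LW_2(\hat m^2_t,\hat m^1_t)|\Delta X_t|$ via (H5); and the $(b_0,\sigma_0,\tilde\sigma_0)(\hat m^\bullet)$ cross-terms with $(\Delta p,\Delta q,\Delta\tilde q)$ are handled by Young's inequality and (H5). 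On the terminal side I split $\Delta v=[v(X^2_\tau,\hat X^2_\tau)-v(X^1_\tau,\hat X^2_\tau)]+[v(X^1_\tau,\hat X^2_\tau)-v(X^1_\tau,\hat X^1_\tau)]$, whose first bracket has the right sign by \eqref{exiv1} and whose second is bounded in $L^2$ by $C_v^{1/2}\|\Delta\hat X_\tau\|_{L^2}$ via \eqref{exiv2}, giving $\e[\Delta p_\tau\Delta X_\tau]\ge-\tfrac{\epsilon}{2}\e[|\Delta X_\tau|^2]-\tfrac{C_v}{2\epsilon}\e[|\Delta\hat X_\tau|^2]$. Collecting everything yields a coercivity estimate of the shape
\begin{equation*}
2C_f\,\e\Big[\int_s^\tau|\Delta u_t|^2 dt\Big]\le \alpha_1\e[|\Delta X_\tau|^2]+\alpha_2\e\Big[\int_s^\tau(|\Delta p|^2+|\Delta q|^2+|\Delta\tilde q|^2+|\Delta X|^2)dt\Big]+R,
\end{equation*}
with small Young constants $\alpha_1,\alpha_2$ and a remainder $R$ linear in $\e[|\Delta\hat X_\tau|^2]+\e[\int|\Delta\hat X|^2 dt]$, hence controlled by the first step (with a useful factor $\tau-s$ on the time integral).

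The remaining task is to close the loop through standard SDE bounds for $\Delta X$, giving $\e[\sup|\Delta X|^2]\le K_1'(\e[\int|\Delta u|^2 dt]+L^2\e[\int|\Delta\hat X|^2 dt])$, and standard BSDE bounds for $(\Delta p,\Delta q,\Delta\tilde q)$, giving
\begin{equation*}
\e\Big[\sup|\Delta p|^2+\int(|\Delta q|^2+|\Delta\tilde q|^2)dt\Big]\le K_2 C_v(\e|\Delta X_\tau|^2+\e|\Delta\hat X_\tau|^2)+K_2\e\Big[\int(|\Delta X|^2+|\Delta u|^2+|\Delta\hat X|^2)dt\Big],
\end{equation*}
and finally applying the feedback Lipschitz bound of Lemma~\ref{lemma:u}:
\begin{equation*}
\|\Delta u\|^2_{\lr_{\f}^2(s,\tau)}\le 2\bigl(L/(2C_f)\bigr)^2\e\Big[\int(|\Delta X|^2+|\Delta p|^2)dt\Big]+2\bigl(B_u/(2C_f)\bigr)^2\e\Big[\int(|\Delta q|^2+|\Delta\tilde q|^2)dt\Big].
\end{equation*}
The hard part is the self-coupling created by the terminal $v(X_\tau,\hat X_\tau)$: the BSDE estimate contributes $K_2 C_v\e[|\Delta X_\tau|^2]$ to $\e\int(|\Delta q|^2+|\Delta\tilde q|^2)dt$, a term that does not shrink as $\tau-s\to 0$, and the $B_u/(2C_f)$ factor in the feedback pushes this back into $\|\Delta u\|^2$ with coefficient of order $(B_u/C_f)^2 L C_v$. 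The assumption $B_u C_f^{-1}\le(24LC_v)^{-1}$ is precisely the threshold making this self-coefficient at most $\tfrac12$, so it is absorbable on the left. All other terms carry a factor $\tau-s$ directly, or acquire one through Step~1 applied to $\e[\int|\Delta\hat X|^2 dt]$; thus by choosing $\gamma=\gamma(L,T,C_f,C_v)$ small, one obtains $\|\Delta u\|^2_{\lr_{\f}^2(s,\tau)}\le k\|\Delta\hat u\|^2_{\lr_{\f}^2(s,\tau)}$ with $k<1$, and Banach's theorem delivers the fixed point $\hat u^{s,\tau,\eta,v}$.
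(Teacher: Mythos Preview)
Your overall strategy—Banach contraction on $\lr_{\f}^2(s,\tau)$, with the smallness condition $B_uC_f^{-1}\le(24LC_v)^{-1}$ controlling the terms that do not shrink with $\tau-s$—is correct and is what the paper does. However, the route you take differs from the paper's, and the It\^o block is a detour that does not actually close under the stated hypothesis.

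The paper never expands $\Delta p_t\Delta X_t$ by It\^o here. Instead it works purely with direct estimates and the feedback Lipschitz bound \eqref{remark:1}, carefully separating in each of $\e\sup|\Delta X|^2$, $\e\sup|\Delta p|^2$, $\e\int|(\Delta q,\Delta\tilde q)|^2$ a term carrying a $(\tau-s)$ prefactor from a single ``non-small'' term. The chain of non-small terms is
\[
\e\sup|\Delta X|^2 \;\lesssim\; \tfrac{B_u^2L^2}{C_f^2}\,\e\!\int|(\Delta q,\Delta\tilde q)|^2
\;\lesssim\; \tfrac{B_u^2L^2}{C_f^2}\,\e\sup|\Delta p|^2
\;\lesssim\; \tfrac{B_u^2L^2C_v^2}{C_f^2}\,\e\sup|(\Delta X,\Delta\hat X)|^2,
\]
which the condition renders $\le\frac14$; this first gives $\e\sup|\Delta X|^2\le\frac12\e\sup|\Delta\hat X|^2$, and then a second use of the same refined SDE bound for $\Delta\hat X$ (non-small part $\sim B_u^2\e\int|\Delta\hat u|^2$, not a generic $K_1$) yields the contraction on $\e\int|\Delta u|^2$.

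Your coercivity route has a concrete problem: the terminal split of $\Delta p_\tau\Delta X_\tau$ produces a residual $\tfrac{C_v}{2\epsilon}\e|\Delta\hat X_\tau|^2$, and $\e|\Delta\hat X_\tau|^2$ does \emph{not} acquire a factor $\tau-s$ from your Step~1. To absorb the $\alpha_i$-terms you must take $\epsilon$ small, but then $\tfrac{C_v}{2\epsilon}\e|\Delta\hat X_\tau|^2$ blows up; balancing the two constraints gives a condition of the form $B_u^4C_v/C_f^2$ small, not the assumed $B_uC_f^{-1}\le(24LC_v)^{-1}$. So the It\^o paragraph, as written, does not lead to the stated conclusion. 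Your last paragraph is essentially the paper's approach, but your Step~1 bound $\e\sup|\Delta\hat X|^2\le K_1\e\int|\Delta\hat u|^2$ with a generic $K_1(L,T)$ is too crude: without isolating the $B_u^2$ factor there (as the paper does in \eqref{exi_lem1_6}), the final contraction constant is of order $K_1/(L C_v)$ rather than $\le\tfrac12$, and this need not be $<1$ when $C_v$ is small. Dropping the It\^o block and sharpening the $\Delta\hat X$ estimate to $\e\sup|\Delta\hat X|^2\le (6B_u^2+(\tau-s)C)\,\e\int|\Delta\hat u|^2$ recovers the paper's argument.
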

	
	The proof is given in Appendix~\ref{pf2.2}. Assumptions (H2)-(H5) ensure that $g_x$ satisfies conditions \eqref{exiv1}-\eqref{exiv0}. Suppose that 
	\begin{equation}\label{contract1}
		B_uC_f^{-1}\le (24L^2)^{-1}.
	\end{equation}
	Then, there exists $\gamma>0$ depending on $(L,T,C_f)$ such that, for any non-negative $t\in[T-\gamma,T)$ and $\eta\in\lr_{\f_t}^2$, there exists $\hat{u}^{t,T,\eta,g_x}\in\lr_{\f}^2(t,T)$ such that
	\begin{equation}\label{set:1}
		\Phi^{t,T,\eta,g_x}(\hat{u}^{t,T,\eta,g_x})=\hat{u}^{t,T,\eta,g_x}.
	\end{equation}
	Let $\Gamma$ be the set of $t\in [0,T]$ such that the following statement holds: for any $\eta\in\lr^2_{\f_t}$, there exists $\hat{u}^{t,T,\eta,g_x}\in\lr_{\f}^2(t,T)$ satisfying \eqref{set:1}. From the above we know that $T-\gamma\in \Gamma$. If $0\in \Gamma$, then we have completed the proof of the existence of the solution of FBSDE \eqref{fbsde1}. Suppose not, let $t_0=\inf \Gamma$ ($t_0$ can still be zero), $\gamma_0>0$ be sufficiently small so that $t_0+\gamma_0<T-\frac{\gamma}{2}$, and $\tau\in[t_0,t_0+\gamma_0)\cap\Gamma$. 
	
	Since $\tau\in\Gamma$, for any initial $\eta\in\lr^2_{\f_\tau}$, there exists a fixed point $\hat{u}=\{\hat{u}_t^{\tau,T,\eta,g_x},\tau\le t\le T\}\in \lr^2_{\f}(\tau,T)$ of the map $\Phi^{\tau,T,\eta,g_x}$.
	We denote by
	\begin{equation*}
		\{(\hat{X}^{\tau,T,\eta,g_x}_t,\hat{p}^{\tau,T,\eta,g_x}_t,\hat{q}^{\tau,T,\eta,g_x}_t,\hat{\tilde{q}}^{\tau,T,\eta,g_x}_t),\tau\le t\le T\}
	\end{equation*}
	the solution of the following FBSDE
	\begin{equation}\label{lem5.5_}
		\left\{
		\begin{aligned}
			&d\hat{X}^{\tau,T,\eta,g_x}_t=b(t,\hat{X}^{\tau,T,\eta,g_x}_t,\hat{u}_t^{\tau,T,\eta,g_x},\hat{m}_t^{\tau,T,\eta,g_x})dt+{\sigma}(t,\hat{X}^{\tau,T,\eta,g_x}_t,\hat{u}_t^{\tau,T,\eta,g_x},\hat{m}_t^{\tau,T,\eta,g_x})dW_t\\
			&\qquad\qquad\quad+\tilde{\sigma}(t,\hat{X}^{\tau,T,\eta,g_x}_t,\hat{u}_t^{\tau,T,\eta,g_x},\hat{m}_t^{\tau,T,\eta,g_x})d\tilde{W}_t,\quad t\in(s,\tau];\\
			&d\hat{p}^{\tau,T,\eta,g_x}_t=-\partial_x H(t,\hat{X}^{\tau,T,\eta,g_x}_t,\hat{p}^{\tau,T,\eta,g_x}_t,\hat{q}^{\tau,T,\eta,g_x}_t,\hat{\tilde{q}}^{\tau,T,\eta,g_x}_t,\hat{u}_t^{\tau,T,\eta,g_x},\hat{m}_t^{\tau,T,\eta,g_x})dt\\
			&\qquad\qquad\quad+\hat{q}^{\tau,T,\eta,g_x}_tdW_t+\hat{\tilde{q}}^{\tau,T,\eta,g_x}_td\tilde{W}_t,\quad t\in[s,\tau);\\
			&\hat{X}^{\tau,T,\eta,g_x}_\tau=\eta,\quad \hat{p}^{\tau,T,\eta,g_x}_T=g_x(\hat{X}^{\tau,T,\eta,g_x}_T,\hat{m}_T^{\tau,T,\eta,g_x}),
		\end{aligned}
		\right.
	\end{equation}
	where 
	\begin{equation*}
		\hat{m}_t^{\tau,T,\eta,g_x}=\lr(\hat{X}^{\tau,T,\eta,g_x}_t|\tilde{\f}_t),\quad \tau\le t\le T,
	\end{equation*}
	and with the optimality condition 
	\begin{equation*}
		b_2(t)\hat{p}_t^{\tau,T,\eta,g_x}+\sigma_2(t)\hat{q}_t^{\tau,T,\eta,g_x}+\tilde{\sigma}_2(t)\hat{\tilde{q}}_t^{\tau,T,\eta,g_x}+f_{0u}(t,\hat{X}_t^{\tau,T,\eta,g_x},\hat{u}_t^{\tau,T,\eta,g_x})=0,\quad t\in[s,\tau],
	\end{equation*}
	such that 
	\begin{equation}\label{lem5.5}
		\begin{split}
			&\e\big[\sup_{\tau\le t\le T}|(\hat{X}_t^{\tau,T,\eta,g_x},\hat{p}_t^{\tau,T,\eta,g_x})|^2+\int_\tau^{T}|(\hat{q}_t^{\tau,T,\eta,g_x},\hat{\tilde{q}}_t^{\tau,T,\eta,g_x})|^2)dt]\big]<+\infty. 
		\end{split}
	\end{equation}
	We then denote by
	\begin{align*}
		(X^{\tau,x,\eta},p^{\tau,x,\eta},q^{\tau,x,\eta},\tilde{q}^{\tau,x,\eta})&=\{(X_t^{\tau,x,\eta},p_t^{\tau,x,\eta},q_t^{\tau,x,\eta},\tilde{q}_t^{\tau,x,\eta}),\tau\le t\le T\}\\
		&\in\sr_{\f}^2(\tau,T)\times\sr_{\f}^2(\tau,T)\times\lr_{\f}^2(\tau,T)\times\lr_{\f}^2(\tau,T)
	\end{align*}
	the solution of the following FBSDE
	\begin{equation}\label{exi_lem2_fbsde}
		\left\{
		\begin{aligned}
			&dX_t=b(t,X_t,u_t,\hat{m}_t^{\tau,T,\eta,g_x})dt+{\sigma}(t,X_t,u_t,\hat{m}_t^{\tau,T,\eta,g_x})dW_t\\
			&\qquad\quad+\tilde{\sigma}(t,X_t,u_t,\hat{m}_t^{\tau,T,\eta,g_x})d\tilde{W}_t,\quad t\in(\tau,T];\\
			&dp_t=-\partial_x H(t,X_t,p_t,q_t,\tilde{q}_t,u_t,\hat{m}_t^{\tau,T,\eta,g_x})dt+q_tdW_t+\tilde{q}_td\tilde{W}_t,\quad t\in[\tau,T);\\
			&X_\tau=x,\quad p_T=g_x(X_T,\hat{m}_T^{\tau,T,\eta,g_x}),
		\end{aligned}
		\right.
	\end{equation}
	with the optimality condition 
	\begin{equation}\label{optimal_condition2}
		b_2(t)p_t+\sigma_2(t)q_t+\tilde{\sigma}_2(t)\tilde{q}_t+f_{0u}(t,X_t,u_t)=0,\quad t\in[\tau,T].
	\end{equation}
	We define $v:\br\times\lr^2_{\f_\tau}\times\Omega\to\br$ as
	\begin{equation*}
		v(x,\eta,\omega)=p_\tau^{\tau,x,\eta}(\omega),\quad x\in\br,\quad \eta\in\lr^2_{\f_\tau}.
	\end{equation*}
	From Theorem~\ref{thm:mp'}, $\{(X_t^{\tau,x,\eta},p_t^{\tau,x,\eta},q_t^{\tau,x,\eta},\tilde{q}_t^{\tau,x,\eta}),\tau\le t\le T\}$ is uniquely defined, and thus $v$ is well-defiend. We have the following estimates of $v$, whose proof is given in Appendix~\ref{pf2.3}.
	
	\begin{lemma}\label{exi_lem2}
		Let assumptions (H1)-(H6) and inequality \eqref{contract1} be satisfied. Then, there exists $\delta>0$ depending only on $(L,T)$ such that $v$ defined above satisfies, $\mathbb{P}$-a.s.
		\begin{align}
			&(v(x',\eta)-v(x,\eta))(x'-x)\geq 0,\quad x,x'\in\br,\quad \eta,\eta'\in\lr^2_{\f_\tau};\label{exi_lem2_v1}\\
			&|v(x',\eta')-v(x,\eta)|^2\le C_v\big[|x'-x|^2+\e[|\eta'-\eta|^2|\tilde{\f}_\tau]\big],\quad x,x'\in\br,\quad \eta,\eta'\in\lr^2_{\f_\tau},\label{exi_lem2_v2}
		\end{align}
		when $L_mC_f^{-1}\le\delta$, with the constant $C_v=C_{L,T}(1+\frac{1}{C_f})^4$, where $C_{L,T}$ is a constant depending only on $(L,T)$.
	\end{lemma}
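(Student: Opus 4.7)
The plan is to prove both properties of $v(x,\eta,\omega)=p_\tau^{\tau,x,\eta}(\omega)$ by sensitivity analysis of the FBSDE \eqref{exi_lem2_fbsde}, applying It\^o's formula to the product of state and adjoint differences and then invoking the convexity of $f_0$ in $(x,u)$ together with the monotonicity and weak monotonicity of $(f_{1x},g_x)$ in $(x,m)$, in direct parallel with the uniqueness argument of Section~\ref{M2} and with Lemma~\ref{main1_lem1}. Carrying the right conditional expectations is what upgrades the integrated inequalities to the pointwise a.s.\ conclusions \eqref{exi_lem2_v1}--\eqref{exi_lem2_v2}.

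For the monotonicity \eqref{exi_lem2_v1} I fix $\eta\in\lr^2_{\f_\tau}$ and compare the two solutions of \eqref{exi_lem2_fbsde} starting from $X_\tau=x$ and $X_\tau=x'$ driven by the same frozen measure flow $\hat m_t^{\tau,T,\eta,g_x}$, so that the terms $(b_0,\sigma_0,\tilde\sigma_0)(t,\hat m_t)$ cancel on subtraction. Applying It\^o to $\Delta p_t\Delta X_t$ on $[\tau,T]$, the $b_1,\sigma_1,\tilde\sigma_1$ contributions cancel pairwise between forward and backward drifts, and the $(b_2\Delta p+\sigma_2\Delta q+\tilde\sigma_2\Delta\tilde q)\Delta u$ term combines with $\Delta f_{0u}\Delta u$ via the optimality condition \eqref{optimal_condition2}, leaving $-[\Delta(f_{0x},f_{0u})]\!\cdot\!(\Delta X,\Delta u)-\Delta f_{1x}\Delta X$ in the integrand. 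By convexity of $f_0$ in $(x,u)$ and monotonicity of $f_{1x}$ in $x$ from (H4), this integrand is bounded above by $-2C_f|\Delta u|^2\le 0$, while monotonicity of $g_x$ in $x$ makes $\Delta p_T\Delta X_T\ge 0$ pointwise. Taking $\e[\cdot\,|\,\f_\tau]$ and noting that $\Delta X_\tau=x'-x$ is deterministic while $\Delta p_\tau=v(x',\eta)-v(x,\eta)$ is $\f_\tau$-measurable, we obtain $(v(x',\eta)-v(x,\eta))(x'-x)\ge 0$ a.s.

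For the Lipschitz bound \eqref{exi_lem2_v2} I compare solutions with data $(x_i,\eta_i)$, $i=1,2$, in two steps. Step one bounds the two self-consistent measure flows $\hat m^i_t:=\hat m_t^{\tau,T,\eta_i,g_x}$: letting $(\hat X^i,\hat p^i,\hat q^i,\hat{\tilde q}^i)$ denote the associated solutions of \eqref{lem5.5_}, the Section~\ref{M2} uniqueness computation executed with $\e[\cdot\,|\,\tilde\f_\tau]$ in place of $\e$, together with the Lipschitz property (H5) of $(b_0,\sigma_0,\tilde\sigma_0)$ in $m$, the weak monotonicity (H6) of $(f_{1x},g_x)$ in $m$, and the smallness condition $L_mC_f^{-1}\le\delta$ used to absorb the $m$-dependent terms, yields
\begin{equation*}
\e\Big[\sup_{\tau\le t\le T}|\hat X^1_t-\hat X^2_t|^2\,\Big|\,\tilde\f_\tau\Big]\le C\,\e[|\eta'-\eta|^2\,|\,\tilde\f_\tau],
\end{equation*}
whence $W_2(\hat m^1_t,\hat m^2_t)^2\le\e[|\hat X^1_t-\hat X^2_t|^2\,|\,\tilde\f_t]$ by the coupling inequality. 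Step two freezes these measure flows in \eqref{exi_lem2_fbsde} and compares the solutions starting from $x_1$ and $x_2$: the same It\^o/convexity manipulation as in the monotonicity step, now carrying source terms bounded by $3L_m^2W_2(\hat m^1_t,\hat m^2_t)^2$ via (H5), combined with standard SDE and BSDE $L^2$-estimates in the style of \eqref{uni8}--\eqref{uni9}, produces
\begin{equation*}
|p^1_\tau-p^2_\tau|^2\le C\Big[|x'-x|^2+\e\Big[\int_\tau^T W_2(\hat m^1_t,\hat m^2_t)^2\,dt\,\Big|\,\f_\tau\Big]\Big].
\end{equation*}
Substituting the step-one bound and tracking constants yields \eqref{exi_lem2_v2} with $C_v=C_{L,T}(1+C_f^{-1})^4$.

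The main obstacle is the conditional-expectation bookkeeping. The right-hand side of \eqref{exi_lem2_v2} carries $\e[|\eta'-\eta|^2\,|\,\tilde\f_\tau]$ rather than $|\eta'-\eta|^2$, reflecting the fact that $\hat m^{\tau,T,\eta,g_x}$ depends on $\eta$ only through $\lr(\eta|\tilde\f_\tau)$, so $\tilde\f_\tau$-conditional expectations must be carried throughout step one instead of the unconditional expectations used in Section~\ref{M2}. A secondary technicality is to track how each use of the strict convexity of $f_0$ (which contributes a factor $C_f^{-1}$ when dividing through), the BSDE $L^2$-estimates, and the absorption of $L_m$-terms under the smallness condition combine to reach the announced fourth-power dependence $C_v=C_{L,T}(1+C_f^{-1})^4$.
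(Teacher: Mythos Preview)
Your proposal is correct and follows essentially the same route as the paper: both parts hinge on applying It\^o to $\Delta p_t\Delta X_t$, cancelling the linear state terms, using the optimality condition to combine $(b_2\Delta p+\sigma_2\Delta q+\tilde\sigma_2\Delta\tilde q)\Delta u$ with $\Delta f_{0u}\Delta u$, and then invoking the convexity of $f_0$ and the (weak) monotonicity of $(f_{1x},g_x)$; the Lipschitz part is done in the same two-stage way (sensitivity of the self-consistent flow $\hat X$ in $\eta$, then sensitivity of the frozen-measure FBSDE in $(x,\hat m)$), and the $(1+C_f^{-1})^4$ arises exactly as you describe.

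The one place where you diverge from the paper is the choice of conditioning in the Lipschitz step. The paper carries $\tilde{\mathbb E}_\tau:=\e[\cdot\,|\,\tilde\f_\tau]$ in \emph{both} stages, so that the tower property $\tilde\f_\tau\subset\tilde\f_t$ lets $\tilde{\mathbb E}_\tau[W_2(\hat m^1_t,\hat m^2_t)^2]\le\tilde{\mathbb E}_\tau[|\Delta\hat X_t|^2]$ compose directly with the stage-one bound, and only at the very end passes from $\tilde{\mathbb E}_\tau[|\Delta p_\tau|^2]$ to $|\Delta p_\tau|^2$. You instead condition on $\f_\tau$ in the frozen-measure stage, which gives the pointwise left-hand side $|\Delta p_\tau|^2$ immediately but leaves $\e[\int W_2^2\,dt\,|\,\f_\tau]$ on the right. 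To feed in the stage-one $\tilde\f_\tau$-conditional bound you then need the observation that $W_2(\hat m^1_t,\hat m^2_t)$ is $\tilde\f_T$-measurable while $\f_\tau=\tilde\f_\tau\vee\sigma(\xi_0,W_{[0,\tau]})$ with $\sigma(\xi_0,W)$ independent of $\tilde\f_T$, so that $\e[\,\cdot\,|\,\f_\tau]=\e[\,\cdot\,|\,\tilde\f_\tau]$ on such integrands. This is the missing link in your ``conditional-expectation bookkeeping''; once you make it explicit, your route and the paper's are equivalent (and in fact the paper's final ``equivalently'' step implicitly uses the same independence, since $p_\tau^{\tau,x,\eta}$ is $\tilde\f_\tau$-measurable for deterministic $x$).
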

	We now attempt to extend the solution further. If we suppose that 
	\begin{equation*}\label{constract2}
		B_uC_f^{-1}(1+C_f^{-1})^{4}\le \min\{(24L^2)^{-1},(24LC_{L,T})^{-1}\},
	\end{equation*}
	then we easily get
	\begin{equation*}
		B_uC_f^{-1}\le \min\{(24L^2)^{-1},(24LC_v)^{-1}\}.
	\end{equation*}
	When $L_mC_f^{-1}$ is small enough, Lemma~\ref{exi_lem2} and \eqref{lem5.5} ensure that \eqref{exiv1}-\eqref{exiv0} hold. Let $s\in[0,\tau)$ to be determined and $\eta\in\lr_{\f_s}^2$. Consider the map $\Phi^{s,\tau,\eta,v}$ as defined above. From Lemma~\ref{exi_lem1}, there exist a constant $\gamma'>0$ depending only on $(L,T,C_f)$ such that, for any non-negative $s\in[\tau-\gamma',\tau)$, there exist $\hat{u}^{s,\tau,\eta,v}\in \lr_{\f}^2(s,\tau)$ such that
	\begin{equation*}
		\Phi^{s,\tau,\eta,v}(\hat{u}^{s,\tau,\eta,v})=\hat{u}^{s,\tau,\eta,v}.
	\end{equation*}
	We denote by $\{\hat{X}_t^{s,\tau,\eta,v},s\le t\le \tau\}\in\sr^2_{\f}(s,\tau)$ the state process  corresponding to $\hat{u}^{s,\tau,\eta,v}$. We construct the following control $\hat{u}^{s,T,\eta,g_x}$ by letting
	\begin{equation*}
		\hat{u}_t^{s,T,\eta,g_x}=
		\left\{
		\begin{array}{ll}
			\hat{u}_t^{s,\tau,\eta,v},\quad &if \quad s\le t<\tau;  \\
			\hat{u}_t^{\tau,T,\hat{X}_\tau^{s,\tau,\eta,v},g_x}, \quad &if \quad \tau\le t\le T.
		\end{array}
		\right.
	\end{equation*}
	Since $\hat{u}^{s,\tau,\eta,v}\in \lr^2_{\f}(s,\tau)$ and $\hat{u}^{\tau,T,\hat{X}_\tau^{s,\tau,\eta,v},g_x}\in \lr^2_{\f}(\tau,T)$, we have $\hat{u}^{s,T,\eta,g_x}\in\lr^2_{\f}(s,T)$. From the definition of $\Phi$, we have
	\begin{equation*}
		\Phi^{s,T,\eta,g_x}(\hat{u}^{s,T,\eta,g_x})=\hat{u}^{s,T,\eta,g_x}.
	\end{equation*}
	
	We have shown that $s\in \Gamma$. From Lemmas~\ref{exi_lem1} and \ref{exi_lem2}, we know that $\gamma'$ depends only on $(L,T,C_f)$, which is independent of $\tau$. Therefore, we can select $\gamma_0$ sufficiently small so that $\gamma_0<\gamma'$, so that $\tau-\gamma'\le t_0+\gamma_0-\gamma'<t_0$. Thus, we can select $s$ to be strictly less than $t_0$ or $s=0$ to give a contradiction to our assumption that $0\notin \Gamma$ and $t_0=\inf\Gamma$. Therefore $0\in \Gamma$, and we have the following existence theorem:
	
	\begin{theorem}[Existence] Suppose that assumptions (H1)-(H6) hold and $\xi_0\in\mathcal{L}_{\mathscr{F}_0}^2$. Then, there exists $\delta>0$ depending only on $(L,T)$, such that FBSDE \eqref{fbsde1} has a solution satisfying \eqref{space} when
		\begin{equation*}
			\max\{B_uC_f^{-1}(1+C_f^{-1})^4, L_mC_f^{-1}\}\le\delta.
		\end{equation*}
		
	\end{theorem}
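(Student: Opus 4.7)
The plan is to carry out the continuation-in-time strategy already sketched between Lemma~\ref{exi_lem1} and the theorem statement, showing that $0$ belongs to the set
\begin{equation*}
\Gamma:=\{t\in[0,T]:\text{for every }\eta\in\mathcal{L}^2_{\mathcal{F}_t},\ \Phi^{t,T,\eta,g_x}\text{ admits a fixed point in }\mathcal{L}^2_{\mathcal{F}}(t,T)\}.
\end{equation*}
For the base case I would observe that under (H2)--(H5) the terminal map $g_x$ satisfies the monotonicity and Lipschitz conditions \eqref{exiv1}--\eqref{exiv0} with constant bounded by $L^2$. Provided $B_uC_f^{-1}\le(24L^2)^{-1}$, which will be built into the final $\delta$, Lemma~\ref{exi_lem1} applied with $v=g_x$ yields a length $\gamma>0$ depending only on $(L,T,C_f)$ such that $[T-\gamma,T]\subset\Gamma$; in particular $\Gamma$ is nonempty.

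The heart of the argument is the induction step. Assume for contradiction that $t_0:=\inf\Gamma$ is strictly positive (the case $t_0\ge 0$ but $0\notin\Gamma$ is analogous), pick a small $\gamma_0>0$, and choose $\tau\in[t_0,t_0+\gamma_0)\cap\Gamma$. Using the fixed point on $[\tau,T]$ guaranteed by $\tau\in\Gamma$, for every $x\in\mathbb{R}$ and $\eta\in\mathcal{L}^2_{\mathcal{F}_\tau}$ I would solve the random-coefficient FBSDE~\eqref{exi_lem2_fbsde} (uniquely solvable by Theorem~\ref{thm:mp'}) and define the new terminal map
\begin{equation*}
v(x,\eta,\omega):=p_\tau^{\tau,x,\eta}(\omega).
\end{equation*}
Lemma~\ref{exi_lem2} shows that $v$ satisfies \eqref{exi_lem2_v1}--\eqref{exi_lem2_v2}, hence also \eqref{exiv1}--\eqref{exiv0}, with Lipschitz constant $C_v=C_{L,T}(1+C_f^{-1})^4$. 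The essential point is that $C_v$ is independent of $\tau$.

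With this $v$ in hand I would invoke Lemma~\ref{exi_lem1} on the sub-interval $[s,\tau]$. The smallness requirement $B_uC_f^{-1}\le(24LC_v)^{-1}$ of that lemma reduces, up to the constant $24LC_{L,T}$, to $B_uC_f^{-1}(1+C_f^{-1})^4\le\delta$, which is precisely the first half of the hypothesis; the second half $L_mC_f^{-1}\le\delta$ is what triggers Lemma~\ref{exi_lem2}. The lemma then produces a length $\gamma'>0$ depending only on $(L,T,C_f)$, and therefore \emph{independent of $\tau$}, together with a fixed point of $\Phi^{s,\tau,\eta,v}$ for each $s\in[\tau-\gamma',\tau)$. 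Concatenating this fixed point on $[s,\tau]$ with the given fixed point on $[\tau,T]$ produces, by construction of $v$, a fixed point of $\Phi^{s,T,\eta,g_x}$, so $s\in\Gamma$. Choosing $\gamma_0<\gamma'$ then puts points strictly below $t_0$ (or the point $0$ itself) into $\Gamma$, contradicting $t_0=\inf\Gamma$.

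The main obstacle, and the source of the quartic factor $(1+C_f^{-1})^4$ in the smallness condition, is securing that the extension step $\gamma'$ is uniform in $\tau$. This uniformity is exactly what the bound on $C_v$ in Lemma~\ref{exi_lem2} buys; without it the iterative extension would shrink at each stage and stall before reaching $0$. Once $0\in\Gamma$ is established, the required integrability \eqref{space} follows from the estimate \eqref{define_estimate} that is built into the fixed-point construction on $[0,T]$, combined with the \textit{a priori} bound \eqref{bound} on the Hamiltonian minimizer.
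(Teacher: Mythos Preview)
Your proposal is correct and follows essentially the same approach as the paper: defining the set $\Gamma$, using Lemma~\ref{exi_lem1} with $v=g_x$ for the base case, defining the decoupling field $v(x,\eta,\omega)=p_\tau^{\tau,x,\eta}(\omega)$ at a point $\tau$ near $\inf\Gamma$, invoking Lemma~\ref{exi_lem2} to bound $C_v$ uniformly in $\tau$, reapplying Lemma~\ref{exi_lem1} to extend by a uniform amount $\gamma'$, and concatenating the two fixed points to derive a contradiction. You have correctly identified the key point that the quartic factor $(1+C_f^{-1})^4$ in the smallness condition arises precisely so that $C_v$, and hence the extension length $\gamma'$, is independent of $\tau$.
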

	
	\normalsize 
	\appendix
	\appendixpage
	\addappheadtotoc
	
	\section{Proof of Lemma~\ref{main1_lem1}.}\label{proof1}
	We set $\Delta X_t=X_t^2-X_t^1$ . The differences $\Delta u_t$, $\Delta p_t$, $\Delta q_t$ and $\Delta \tilde{q}_t$ are defined in a similar way. Then, $\{(\Delta X_t,\Delta p_t,\Delta q_t,\Delta \tilde{q}_t),0\le t\le T\}$ satisfy the following FBSDE
	\begin{equation*}
		\left\{
		\begin{aligned}
			&d\Delta X_t=[\gamma((b_0(t,m_t^2)-b_0(t,m_t^1))+b_1(t)\Delta X_t+b_2(t)\Delta u_t)+\Delta\ii_t^b]dt\\
			&\quad\quad\quad+[\gamma((\sigma_0(t,m_t^2)-\sigma_0(t,m_t^1))+\sigma_1(t)\Delta X_t+\sigma_2(t)\Delta u_t)+\Delta\ii_t^{\sigma}]dW_t\\
			&\quad\quad\quad+[\gamma((\tilde{\sigma}_0(t,m_t^2)-\tilde{\sigma}_0(t,m_t^1))+\tilde{\sigma}_1(t)\Delta X_t+\tilde{\sigma}_2(t)\Delta u_t)+\Delta\ii_t^{\tilde{\sigma}}]d\tilde{W}_t,\quad t\in(0,T];\\
			&d\Delta p_t=-[\gamma(b_1(t)\Delta p_t+\sigma_1(t)\Delta q_t+\tilde{\sigma}_t\Delta\tilde{q}_t+(f_x(t,X_t^2,u_t^2,m_t^2)-f_x(t,X_t^1,u_t^1,m_t^1)))\\
			&\quad\qquad\quad+\Delta\ii_t^f]dt+\Delta q_tdW_t+\Delta\tilde{q}_td\tilde{W}_t,\quad t\in[0,T);\\
			&\Delta X_0=\Delta\xi,\quad \Delta p_T=\gamma(g_x(X_T^2,m_T^2)-g_x(X_T^1,m_T^1))+\Delta\ii_T^g,
		\end{aligned}
		\right.
	\end{equation*}
	with the condition
	\begin{equation}\label{main1_lem1_0.2}
		b_2(t)\Delta p_t+\sigma_2(t)\Delta q_t+\tilde{\sigma}(t)\Delta \tilde{q}_t+(f_{0u}(t,X_t^2,u_t^2)-f_{0u}(t,X_t^1,u_t^1))=0,\quad t\in[0,T].
	\end{equation}
	First we note the fact that
	\begin{equation}\label{main1_lem1_0.1}
		\e[W_2(m_t^1,m_t^2)^2]\le \e[\e[|\Delta X_t|^2|\tilde{\f}_t]]=\e[|\Delta X_t|^2]\le \e[\sup_{0\le t\le T}|\Delta X_t|^2],\quad t\in[0,T].
	\end{equation}
	Under assumptions (H1), (H3), (H5) and the estimates \eqref{main1_lem1_0.1}, by standard estimates for SDEs and BSDEs, there exist two constants $C_1>0$ and $C_2>0$ depending only on $(L,T)$, such that 
	\begin{align}
		&\e[\sup_{0\le t\le T}|\Delta X_t|^2]\le C_1\e\big[|\Delta\xi|^2+\int_0^T \gamma|\Delta u_t|^2+|(\Delta\ii_t^b,\Delta\ii_t^{\sigma},\Delta\ii_t^{\tilde{\sigma}})|^2 dt\big];\label{main1_lem1_1}\\
		&\e\big[\sup_{0\le t\le T}|\Delta p_t|^2+\int_0^T |(\Delta q_t,\Delta \tilde{q}_t)|^2 dt\big]\notag\\
		&\qquad\qquad\quad \le C_2\e\big[\gamma\sup_{0\le t\le T}|\Delta X_t|^2+|\Delta\ii_T^g|^2+\int_0^T\gamma|\Delta u_t|^2+|\Delta\ii_t^f|^2dt\big].\label{main1_lem1_2}
	\end{align}
	Applying Itô's lemma on $\Delta p_t\Delta X_t$ and taking expection, we have
	\begin{equation}\label{main1_lem1_2.5}
		\begin{split}
			&\e[\Delta p_T\Delta X_T]-\e[\Delta p_0\Delta\xi]\\
			&=\e\big[\int_0^T \gamma[(b_0(t,m_t^2)-b_0(t,m_t^1))\Delta p_t+(\sigma_0(t,m_t^2)-\sigma_0(t,m_t^1))\Delta q_t\\
			&\qquad\qquad\quad+(\tilde{\sigma}_0(t,m_t^2)-\tilde{\sigma}_0(t,m_t^1))\Delta \tilde{q}_t]\\
			&\qquad\quad+\gamma[(b_2(t)\Delta p_t+\sigma_2(t)\Delta q_t+\tilde{\sigma}_2(t)\Delta\tilde{q})\Delta u_t\\
			&\qquad\qquad\quad-(f_x(t,X_t^2,u_t^2,m_t^2)-f_x(t,X_t^1,u_t^1,m_t^1))\Delta X_t]\\
			&\qquad\quad+(\Delta p_t\Delta\ii_t^b+\Delta q_t\Delta\ii_t^{\sigma}+\Delta\tilde{q}_t\Delta\ii_t^{\tilde{\sigma}}-\Delta X_t\Delta\ii_t^f) dt\big].
		\end{split}
	\end{equation}
	By using the Lipschitz-continuity assumption (H5) and the average inequality, we have
	\begin{align}
		&\e\big[\int_0^T\gamma[(b_0(t,m_t^2)-b_0(t,m_t^1))\Delta p_t+(\sigma_0(t,m_t^2)-\sigma_0(t,m_t^1))\Delta q_t\notag\\
		&\qquad\qquad+(\tilde{\sigma}_0(t,m_t^2)-\tilde{\sigma}_0(t,m_t^1))\Delta \tilde{q}_t]dt\big]\notag\\
		&\le \e\big[\int_0^T\gamma L_m W_2(m_t^1,m_t^2)(|\Delta p_t|+|\Delta q_t|+|\Delta \tilde{q}_t|)dt\big]\notag\\
		&\le \frac{3TL_m\gamma}{2}\e[\sup_{0\le t\le T}|\Delta X_t|^2]+\frac{TL_m\gamma}{2}\e[\sup_{0\le t\le T}|\Delta p_t|^2]+\frac{L_m\gamma}{2}\e\big[\int_0^T|\Delta q_t|^2+|\Delta\tilde{q}_t|^2dt\big].\label{main1_lem1_8}
	\end{align}
	From \eqref{main1_lem1_0.2} and the convex assumption (H4), we have
	\begin{equation}\label{main1_lem1_3}
		\begin{split}
			&(b_2(t)\Delta p_t+\sigma_2(t)\Delta q_t+\tilde{\sigma}(t)\Delta \tilde{q}_t)\Delta u_t-(f_{0x}(t,X_t^2,u_t^2)-f_{0x}(t,X_t^1,u_t^1))\Delta X_t\\
			&\le -2C_f|\Delta\hat{u}_t|^2,\quad t\in[0,T].
		\end{split}
	\end{equation}
	From the weak monotonicity assumption (H6) and the fact that $m_t^i=\lr(X_t^i|\tilde{\f}_t)$, we have
	\begin{equation}\label{main1_lem1_5}
		\begin{split}
			&\e[(f_{1x}(t,X_t^2,m_t^2)-f_{1x}(t,X_t^1,m_t^1))\Delta X_t]\\
			&\quad=\e[\e[(f_{1x}(t,X_t^2,m_t^2)-f_{1x}(t,X_t^1,m_t^1))\Delta X_t|\tilde{\f}_t]]\geq 0,\quad t\in[0,T];\\
			&\e[\Delta p_T\Delta X_T]=\e[\gamma(g_x(X_T^2,m_T^2)-g_x(X_T^1,m_T^1))\Delta X_T+\Delta X_T\Delta\ii_T^g]\geq\e[\Delta X_T\Delta\ii_T^g].
		\end{split}
	\end{equation}
	Plugging \eqref{main1_lem1_8}, \eqref{main1_lem1_3} and \eqref{main1_lem1_5} into \eqref{main1_lem1_2.5}, and using the average inequality, we have for any $\epsilon>0$, 
	\begin{equation*}
		\begin{split}
			&2C_f\gamma\e\big[\int_0^T |\Delta u_t|^2 dt\big]\\
			&\le \e\big[|\Delta p_0||\Delta\xi|+|\Delta X_T||\Delta\ii_T^g|+\int_0^T (\Delta p_t\Delta\ii_t^b+\Delta q_t\Delta\ii_t^{\sigma}+\Delta\tilde{q}_t\Delta\ii_t^{\tilde{\sigma}}-\Delta X_t\Delta\ii_t^f) dt\big]\\
			&\quad+\frac{3TL_m\gamma}{2}\e[\sup_{0\le t\le T}|\Delta X_t|^2]+\frac{TL_m\gamma}{2}\e[\sup_{0\le t\le T}|\Delta p_t|^2]+\frac{L_m\gamma}{2}\e\big[\int_0^T|\Delta q_t|^2+|\Delta\tilde{q}_t|^2dt\big]\\
			&\le (\frac{3TL_m\gamma}{2}+\epsilon)\e[\sup_{0\le t\le T}|\Delta X_t|^2]+(\frac{TL_m\gamma}{2}+\epsilon)\e[\sup_{0\le t\le T}|\Delta p_t|^2]\\
			&\quad+(\frac{L_m\gamma}{2}+\epsilon)\e\big[\int_0^T|\Delta q_t|^2+|\Delta\tilde{q}_t|^2dt\big]+C(T,\epsilon)(\e[|\Delta\xi|^2]+\|\Delta\ii\|^2_{\iii}).
		\end{split}
	\end{equation*}
	Here, the notation $C(T,\epsilon)$ stands for a constant depending only on $T$ and $\epsilon$. Plugging \eqref{main1_lem1_1} and \eqref{main1_lem1_2} into above, we have
	\begin{equation*}
		\begin{split}
			&2C_f\gamma\e\big[\int_0^T |\Delta u_t|^2 dt\big]\\
			&\le  C(L,T,\epsilon)(\e[|\Delta\xi|^2]+\|\Delta\ii\|^2_{\iii})+(C_1C_2+C_1+C_2)\gamma\epsilon\e\big[\int_0^T |\Delta u_t|^2 dt\big]\\
			&\qquad+\frac{3TC_1+\max\{1,T\}(C_1+1)C_2}{2}L_m\gamma\e\big[\int_0^T |\Delta u_t|^2 dt\big].
		\end{split}
	\end{equation*}
	The constant
	\begin{equation*}
		\delta:=2(3TC_1+\max\{1,T\}(C_1+1)C_2)^{-1}
	\end{equation*}
	depends only on $(L,T)$. If $L_mC_f^{-1}\le\delta$, then, we choose 
	\begin{equation*}
		\epsilon=\frac{C_f}{2(C_1C_2+C_1+C_2)}
	\end{equation*}
	to get  
	\begin{equation}\label{main1_lem1_9}
		\gamma\e\big[\int_0^T |\Delta u_t|^2 dt\big]\le C(L,T,C_f)(\e[|\Delta\xi|^2]+\|\Delta \ii\|^2_{\iii}).
	\end{equation}
	Plugging \eqref{main1_lem1_9} into \eqref{main1_lem1_1} and \eqref{main1_lem1_2} respectively, we have
	\begin{equation*}
		\begin{split}
			&\e\big[\sup_{0\le t\le T}|(\Delta X_t,\Delta p_t)|^2+\int_0^T|(\Delta q_t,\Delta\tilde{q}_t)|^2dt\big]\le C(L,T,C_f)(\e[|\Delta\xi|^2]+\|\Delta\ii\|^2_{\iii}).
		\end{split}
	\end{equation*}
	From Lemma~\ref{lemma:u} we know that
	\begin{equation*}
		|\Delta u_t|\le C(L,C_f)(|\Delta X_t|+|\Delta p_t|+|\Delta q_t|+|\Delta\tilde{q}_t|),\quad t\in[0,T].
	\end{equation*}
	So we eventually have
	\begin{equation*}
		\|\Theta^1-\Theta^2\|_{\mathbb{S}}^2\le C(L,T,C_f)([\e|\xi^1-\xi^2|^2]+\|\ii^1-\ii^2\|_{\iii}^2).
	\end{equation*}
	\endproof
	\section{Proof of Lemma~\ref{exi_lem1}.}\label{pf2.2}
	Let $\eta\in\lr_{\f_s}^2$ and $\hat{u}^1,\hat{u}^2\in \lr_{\f}^2(s,\tau)$. We denote by $\{\hat{X}_t^i,s\le t\le\tau\}$ the state process corresponding to $\hat{u}^i$ as in \eqref{exix_hat} and set $\hat{m}^i_t=\lr(\hat{X}^i_t|\tilde{\f}_t)$ for $t\in[s,\tau]$ and $i=1,2$. We then denote by $\{(X_t^i,p_t^i,q_t^i,\tilde{q}_t^i),s\le t\le\tau\}$ the solution of FBSDE \eqref{exix} corresponding to $\{\hat{m}^i_t,s\le t\le \tau\}$ for $i=1,2$. We set $\Delta X_t=X_t^2-X_t^1$ for $t\in[s,\tau]$. The differences $(\Delta p_t,\Delta q_t,\Delta \tilde{q}_t,\Delta u_t,\Delta \hat{u}_t,\Delta \hat{X}_t)$ are defined in a similar way. Then $\{(\Delta X_t,\Delta p_t,\Delta q_t,\Delta \tilde{q}_t),s\le t\le\tau\}$ satisfy the following FBSDE
	\begin{equation*}
		\left\{
		\begin{aligned}
			&d\Delta X_t=[(b_0(t,\hat{m}^2_t))-b_0(t,\hat{m}^1_t)+b_1(t)\Delta X_t+b_2(t)\Delta u_t]dt\\
			&\quad\quad\quad +[(\sigma_0(t,\hat{m}^2_t)-\sigma_0(t,\hat{m}^1_t))+\sigma_1(t)\Delta X_t+{\sigma}_2(t)\Delta u_t]dW_t\\
			&\quad\quad\quad +[(\tilde{\sigma}_0(t,\hat{m}^2_t)-\tilde{\sigma}_0(t,\hat{m}^1_t))+\tilde{\sigma}_1(t)\Delta X_t+\tilde{\sigma}_2(t)\Delta u_t] d\tilde{W}_t,\quad t\in(s,\tau];\\
			&d\Delta p_t=-\big[b_1(t)\Delta p_t+\sigma_1(t)\Delta q_t+\tilde{\sigma}_1(t)\Delta \tilde{q}_t+(f_{0x}(t,X^2_t,u^2_t)-f_{0x}(t,X^1_t,u^1_t))\\
			&\quad\quad\quad+(f_{1x}(t,X^2_t,\hat{m}_t^2)-f_{1x}(t,X^1_t,\hat{m}_t^1))\big]dt+\Delta q_tdW_t+\Delta\tilde{q}_td\tilde{W}_t,\quad t\in[s,\tau);\\
			&\Delta X_s=0, \quad \Delta p_\tau=v(X_\tau^2,\hat{X}^2_\tau)-v(X_\tau^1,\hat{X}^1_\tau).
		\end{aligned}
		\right.
	\end{equation*}
	From Lemma~\ref{lemma:u} we know that
	\begin{equation}\label{exi_lem1_1}
		|\Delta u_t|\le \frac{L}{2C_f}|\Delta X_t|+\frac{L}{2C_f}|\Delta p_t|+\frac{B_u}{2C_f}(|\Delta q_t|+|\Delta \tilde{q}_t|),\quad t\in[0,T],
	\end{equation}
	and recall that $B_u\le L$. From \eqref{exi_lem1_1} and assumptions (H1), (H3) and (H5), we have 
	\begin{align}
		&\e[\sup_{s\le t\le\tau}|\Delta X_t|^2]\notag\\
		&\le (\tau-s)C(L,T)\e\big[\sup_{s\le t\le\tau}|(\Delta \hat{X}_t,\Delta {X}_t)|^2+\int_s^{\tau}|\Delta u_t|^2dt\big]+6B_u^2\e\big[\int_s^{\tau}|\Delta u_t|^2dt\big]\notag\\
		&\le (\tau-s)C(L,T,C_f)\e\big[\sup_{s\le t\le\tau}|(\Delta \hat{X}_t,\Delta {X}_t,\Delta p_t)|^2+\int_s^{\tau}|(\Delta q_t,\Delta\tilde{q}_t)|^2dt\big]\notag\\
		&\quad +\frac{6B_u^2L^2}{C_f^2}\e\big[\int_s^{\tau}|(\Delta q_t,\Delta\tilde{q}_t)|^2dt\big].\label{exi_lem1_2}
	\end{align}
	Here, the notation $C(L,T,C_f)$ stands for a constant depending only on $L$, $T$ and $C_f$, and we have used the following estimates
	\begin{equation*}
		\e[W_2(\hat{m}_t^1,\hat{m}_t^2)^2]\le \e[\e[|\Delta \hat{X}_t|^2|\tilde{\f}_t]]=\e[|\Delta \hat{X}_t|^2]\le \e[\sup_{0\le t\le T}|\Delta \hat{X}_t|^2],\quad t\in[0,T].
	\end{equation*}
	Similarly, by using Doob's inequality, Cauchy's inequality and \eqref{exi_lem1_1}, we have
	\begin{align}
		&\e[\sup_{s\le t\le\tau}|\Delta p_t|^2]\notag\\
		&\le 2\e[|v(X_\tau^2,\hat{X}_\tau^2)-v(X_\tau^1,\hat{X}_\tau^1)|^2]+8\e\big[\big(\int_s^{\tau}|b_1(r)\Delta p_r|+|\sigma_1(r)\Delta q_r|+|\tilde{\sigma}_1(r)\Delta \tilde{q}_r|\notag\\
		&\qquad\qquad\qquad\qquad\qquad\qquad\qquad\qquad\qquad +|f_{x}(r,X^2_r,u^2_r,\hat{m}^2_r)-f_{x}(r,X^1_r,u^1_r,\hat{m}^1_r)| dr\big)^2\big]\notag\\
		&\le (\tau-s)C(L,T,C_f)\e\big[\sup_{s\le t\le\tau}|(\Delta \hat{X}_t,\Delta {X}_t,\Delta p_t)|^2+\int_s^{\tau}|(\Delta q_t,\Delta\tilde{q}_t)|^2dt\big]\notag\\
		&\quad +4C_v^2\e[\sup_{s\le t\le\tau}|(\Delta \hat{X}_t,\Delta {X}_t)|^2].\label{exi_lem1_3}
	\end{align}
	We also have from \eqref{exi_lem1_1} that
	\begin{align}
		&\e\big[\int_s^\tau|(\Delta q_t,\Delta\tilde{q}_t)|^2dt\big]=\e\big[\big|\int_s^\tau\Delta{q}_tdW_t+\int_s^\tau\Delta\tilde{q}_td\tilde{W}_t\big|^2\big]\notag\\
		&=\e\big[\big|\Delta p_\tau-\Delta p_s+\int_s^\tau b_1(t)\Delta p_t+\sigma_1(t)\Delta q_t+\tilde{\sigma}_1(r)\Delta \tilde{q}_r\notag\\
		&\qquad\quad+(f_{x}(t,X^2_t,u^2_t,\hat{m}^2_t)-f_{x}(t,X^1_t,u^1_t,\hat{m}^1_t))dt \big|^2\big]\notag\\
		&\le (\tau-s)C(L,T,C_f)\e\big[\sup_{s\le t\le\tau}|(\Delta \hat{X}_t,\Delta {X}_t,\Delta p_t)|^2+\int_s^{\tau}|(\Delta q_t,\Delta\tilde{q}_t)|^2dt\big]\notag\\
		&\qquad +6\e[\sup_{s\le t\le\tau}|\Delta p_t|^2].\label{exi_lem1_4}
	\end{align}
	From \eqref{exi_lem1_2}, \eqref{exi_lem1_3} and \eqref{exi_lem1_4}, we deduce that
	\begin{equation}\label{exi_lem1_4'}
		\begin{split}
			&\e[\sup_{s\le t\le\tau}|\Delta X_t|^2]\\
			&\qquad\le \frac{\frac{144B_u^2L^2C_v^2}{C_f^2}+(\tau-s)C(L,T,C_f,C_v)}{1-(\tau-s)C(L,T,C_f,C_v)}\e[\sup_{s\le t\le\tau}|(\Delta X_t,\Delta \hat{X}_t)|^2];\\
			&\e[\sup_{s\le t\le\tau}|\Delta p_t|^2]\\
			&\qquad\le \frac{4C_v^2+(\tau-s)C(L,T,C_f,C_v)}{1-(\tau-s)C(L,T,C_f,C_v)}\e[\sup_{s\le t\le\tau}|(\Delta X_t,\Delta \hat{X}_t)|^2];\\
			&\e\big[\int_s^\tau|(\Delta q_t,\Delta\tilde{q}_t)|^2\big]\\
			&\qquad\le \frac{24C_v^2+(\tau-s)C(L,T,C_f,C_v)}{1-(\tau-s)C(L,T,C_f,C_v)}\e[\sup_{s\le t\le\tau}|(\Delta X_t,\Delta \hat{X}_t)|^2].
		\end{split}
	\end{equation}
	From the condition
	\begin{equation}\label{A8}
		\frac{B_u}{C_f}\le \frac{1}{24LC_v},
	\end{equation}
	we have that when $(\tau-s)$ is small enough, 
	\begin{equation}\label{exi_lem1_5'}
		\begin{split}
			&\e[\sup_{s\le t\le\tau}|\Delta X_t|^2]\le\frac{1}{2}\e[\sup_{s\le t\le\tau}|\Delta \hat{X}_t|^2].
		\end{split}
	\end{equation}
	From \eqref{exi_lem1_4'}, \eqref{exi_lem1_5'} and \eqref{exi_lem1_1}, we deduce that when $(\tau-s)$ is small enough,
	\begin{equation}\label{exi_lem1_5}
		\begin{split}
			\e\big[\int_s^{\tau}|\Delta u_t|^2dt\big]\le \frac{48L^2C_v^2}{C_f^2}\e[\sup_{s\le t\le\tau}|\Delta \hat{X}_t|^2].
		\end{split}
	\end{equation}
	
	Similar as the above, we have
	\begin{equation}\label{exi_lem1_6}
		\e[\sup_{s\le t\le\tau}|\Delta \hat{X}_t|^2]\le \frac{6B_u^2+(\tau-s)C(L,T,C_f,C_v)}{1-(\tau-s)C(L,T,C_f,C_v)}\e\Big[\int_s^{\tau}|\Delta \hat{u}_t|^2dt\Big].
	\end{equation}
	From \eqref{A8}, \eqref{exi_lem1_5} and \eqref{exi_lem1_6}, we deduce that

	\begin{equation*}
		\begin{split}
			\e\big[\int_s^\tau |\Delta u_t|^2 dt\big]
			&\le \frac{\frac{1}{2}+(\tau-s)C(L,T,C_f,C_v)}{1-(\tau-s)C(L,T,C_f,C_v)}\e\big[\int_s^{\tau}|\Delta \hat{u}_t|^2dt\big].
		\end{split}
	\end{equation*}
	It follows that when $(\tau-s)$ is small enough,
	\begin{equation*}
		||\Phi(\hat{u}^2)-\Phi(\hat{u}^1)||^2_{\lr_{\f}^2(s,\tau)}=\e\big[\int_s^\tau |\Delta u_t|^2 dt\big]\le \frac{3}{4}||\hat{u}^2-\hat{u}^1||^2_{\lr_{\f}^2(s,\tau)}.
	\end{equation*}
	As a result, we get a contraction map for sufficiently small $(\tau-s)$ depending only on $(L,T,C_v,C_f)$ as desired.
	\endproof
	\section{Proof of Lemma~\ref{exi_lem2}.}\label{pf2.3}
	In this section, we give the proof of \eqref{exi_lem2_v1} and \eqref{exi_lem2_v2}, respectively. From the condition \eqref{contract1} and Lemma~\ref{exi_lem1}, we know that $v$ is well-defined.
	
	\subsection{Proof of \eqref{exi_lem2_v1}.}
	Let $\eta\in\lr^2_{\f_\tau}$ and $x_1,x_2\in\br$. We denote by $\{(\hat{X}_t,\hat{p}_t,\hat{q}_t,\hat{\tilde{q}}_t),\tau\le t\le T\}$ the solution of FBSDE \eqref{lem5.5_} with initial condition $\hat{X}_\tau=\eta$, and set $\hat{m}_t=\lr(\hat{X}_t|\tilde{\f}_t)$ for $t\in[\tau,T]$. We then denote by $\{(X_t^i,p_t^i,q_t^i,\tilde{q}_t^i),\tau\le t\le T\}$ the solution of FBSDE \eqref{exi_lem2_fbsde} corresponding to $\{\hat{m}_t,\tau\le t\le T\}$ with initial conditions $X_\tau^i=x_i$ for $i=1,2$. We set $\Delta X_t=X_t^2-X_t^1$ and $\Delta p_t=p_t^2-p_t^1$ for $t\in[\tau,T]$. By applying Itô's lemma to $\Delta p_t\Delta X_t$ from $\tau$ to $T$ and taking expectation conditional on $\f_\tau$ (denoted by $\e_\tau[\cdot]$), we get
	\begin{equation}\label{C1_1}
		\begin{split}
			&\e_{\tau}[\Delta p_T\Delta X_T]-\Delta p_\tau\Delta x\\
			&=\e_\tau\big[\int_\tau^T (b_2(t)\Delta p_t+\sigma_2(t)\Delta q_t+\tilde{\sigma}_2(t)\Delta\tilde{q}_t)\Delta u_t\\
			&\qquad\qquad-\big(f_{0x}(t,{X}_t^2,{u}_t^2)-f_{0x}(t,{X}_t^1,{u}_t^1)\big)\Delta{X}_t\\
			&\qquad\qquad-\big(f_{1x}(t,{X}_t^2,\hat{m}_t)-f_{1x}(t,{X}_t^1,\hat{m}_t)\big)\Delta{X}_t dt\big].
		\end{split}
	\end{equation}
	From the optimal conditions \eqref{optimal_condition2} of  $u^1$ and $u^2$, we have
	\begin{equation*}
		b_2(t)\Delta p_t+\sigma_2(t)\Delta q_t+\tilde{\sigma}_2(t)\Delta\tilde{q}_t=-\big(f_{0u}(t,{X}_t^2,{u}_t^2)-f_{0u}(t,{X}_t^1,{u}_t^1)\big)\Delta{u}_t,\quad t\in[\tau,T].
	\end{equation*}
	From the convexity assumption (H4), we have
	\begin{equation}\label{C1_2}
		\begin{split}
			&(b_2(t)\Delta p_t+\sigma_2(t)\Delta q_t+\tilde{\sigma}_2(t)\Delta\tilde{q}_t)\Delta{u}_t-\big(f_{0x}(t,{X}_t^2,{u}_t^2)-f_{0x}(t,{X}_t^1,{u}_t^1)\big)\Delta{X}_t\\
			&\quad\quad =-\big((f_{0x},f_{0u})(t,{X}_t^2,{u}_t^2)-(f_{0x},f_{0u})(t,{X}_t^1,{u}_t^1)\big)\cdot(\Delta X_t,\Delta u_t)\\
			&\qquad\le -2C_f|\Delta u_t|^2,\quad t\in[\tau,T];\\
			&\big(f_{1x}(t,{X}_t^2,\hat{m}_t)-f_{1x}(t,{X}_t^1,\hat{m}_t)\big)\Delta{X}_t\geq 0,\quad t\in[\tau,T];\\
			&\Delta p_T\Delta X_T=\big(g(X_T^2,\hat{m}_T)-g(X_T^1,\hat{m}_T)\big)\Delta X_T\geq 0.
		\end{split}
	\end{equation}
	From \eqref{C1_1} and \eqref{C1_2}, we deduce that
	\begin{equation*}
		[v(x_2,\eta)-v(x_1,\eta)](x_2-x_1)=\Delta p_\tau\Delta x\geq 0.
	\end{equation*}
	
	\subsection{Proof of \eqref{exi_lem2_v2}.}
	Let $x_1,x_2\in\br$ and $\eta_1,\eta_2\in \lr^2_{\f_\tau}$. We denote by $\{(\hat{X}_t^i,\hat{p}_t^i,\hat{q}_t^i,\hat{\tilde{q}}_t^i),\tau\le t\le T\}$ the solutions of FBSDE \eqref{lem5.5_} with initial conditions $\hat{X}^i_\tau=\eta_i$, and set $\hat{m}_t^i=\lr(\hat{X}^i_t|\tilde{\f}_t)$ for $t\in[\tau,T]$ and $i=1,2$. We then denote by $\{(X_t^i,p_t^i,q_t^i,\tilde{q}_t^i),\tau\le t\le T\}$ the solution of FBSDE \eqref{exi_lem2_fbsde} corresponding to $\{\hat{m}^i_t,\tau\le t\le T\}$ with initial conditions $X_\tau^i=x_i$ for $i=1,2$. We set $\Delta X_t=X_t^2-X_t^1$. The differences $\Delta p_t,\Delta q_t,\Delta \tilde{q}_t,\Delta \hat{X}_t,\Delta \hat{p}_t,\Delta \hat{q}_t,\Delta \hat{\tilde{q}}_t,\Delta x,\Delta \eta$ are defined in a similar way. Then $\{(\Delta X_t^i,\Delta p_t^i,\Delta q_t^i,\Delta \tilde{q}_t^i),\tau\le t\le T\}$ satisfy the following FBSDE
	\begin{equation*}
		\left\{
		\begin{aligned}
			&d\Delta X_t=[(b_0(t,\hat{m}^2_t)-b_0(t,\hat{m}^1_t))+b_1(t) \Delta X_t+b_2(t) \Delta u_t]dt\\
			&\quad\quad\quad+[(\sigma_0(t,\hat{m}^2_t)-\sigma_0(t,\hat{m}^1_t))+\sigma_1(t) \Delta X_t+\sigma_2(t)\Delta u_t]dW_t\\
			&\quad\quad\quad+[(\tilde{\sigma}_0(t,\hat{m}^2_t)-\tilde{\sigma}_0(t,\hat{m}^1_t))+\tilde{\sigma}_1(t) \Delta X_t+\tilde{\sigma}_2(t)\Delta u_t]d\tilde{W}_t,\quad t\in(\tau,T];\\
			&d\Delta p_t=-\big[b_1(t)\Delta p_t+\sigma_1(t)\Delta q_t+\tilde{\sigma}_1(t)\Delta \tilde{q}_t+(f_{0x}(t,X^2_t,u^2_t)-f_{0x}(t,X^1_t,u^1_t))\\
			&\quad\quad\quad+(f_{1x}(t,X^2_t,\hat{m}^2_t)-f_{1x}(t,X^1_t,\hat{m}^1_t))\big]dt+\Delta q_tdW_t+\Delta\tilde{q}_td\tilde{W}_t,\quad t\in[\tau,T);\\
			&\Delta X_\tau=\Delta x, \quad \Delta p_T=g_x(X^2_T,\hat{m}^2_T)-g_x(X^1_T,\hat{m}^1_T),
		\end{aligned}
		\right.
	\end{equation*}
	with the condition
	\begin{equation}\label{exi_lem2_0.5}
		b_2(t)\Delta p_t+\sigma_2(t)\Delta{q}_t+\tilde{\sigma}_2(t)\Delta{\tilde{q}}_t+(f_{0u}(t,X^2_t,u^2_t)-f_{0u}(t,X^1_t,u^1_t))=0,\quad t\in[\tau,T].
	\end{equation}
	Applying Itô's lemma to $\Delta p_t\Delta X_t$ from $\tau$ to $T$ and taking expectation conditional on $\tilde{\f}_\tau$ (denote by $\ee[\cdot]$), from \eqref{exi_lem2_0.5}, the convexity assumption (H4) and the Lipschitz assumption (H5), we have
	\begin{equation}\label{exi_lem2_1'}
		\begin{split}
			&\ee[\Delta p_T\Delta X_T]-\ee[\Delta p_\tau\Delta x]\\
			&\le -2C_f \ee\big[\int_{\tau}^T |\Delta u_t|^2 dt\big]+L\ee\big[\int_{\tau}^T W_2(\hat{m}^1_t,\hat{m}_t^2) (|\Delta X_t|+|\Delta p_t|+|\Delta q_t|+|\Delta\tilde{q}_t|)  dt\big].
		\end{split}
	\end{equation}
	From assumptions (H4) and (H5), we have 
	\begin{align}
		&\ee[\Delta p_T\Delta X_T]\notag\\
		&=\ee\big[(g_x(X_T^2,\hat{m}^2_T)-g_x(X_T^1,\hat{m}^2_T))\Delta X_T\big]+\ee\big[(g_x(X_T^1,\hat{m}^2_T)-g_x(X_T^1,\hat{m}^1_T))\Delta X_T\big]\notag\\
		&\geq -L\ee\big[W_2(\hat{m}^1_T,\hat{m}^2_T)|\Delta X_T|\big].\label{exi_lem2_2'}
	\end{align}
	Plugging \eqref{exi_lem2_2'} into \eqref{exi_lem2_1'} and using the average inequality, we have for any $\epsilon>0$,
	\begin{equation}\label{exi_lem2_4}
		\begin{split}
			\ee\big[\int_\tau^T|\Delta u_t|^2 dt\big]\le& \frac{C(L,T)}{\epsilon C_f^2}\big(|\Delta x|^2+\ee[\sup_{\tau\le t\le T}|\Delta \hat{X}_t|^2]\big)\\
			&+\epsilon\ee\big[\sup_{\tau\le t\le T}|\Delta X_t|^2+|\Delta p_\tau|^2+\int_{\tau}^T|\Delta p_t|^2+|\Delta q_t|^2+|\Delta\tilde{q}_t|^2dt\big],
		\end{split}
	\end{equation}
	where we have used the estimates
	\begin{equation*}
		\ee[W_2(\hat{m}_t^1,\hat{m}_t^2)^2]\le \ee[\e[|\Delta\hat{X}_t|^2|\tilde{\f}_t]]=\ee[|\Delta\hat{X}_t|^2],\quad t\in[\tau,T].
	\end{equation*}
	By standard estimates for SDEs and BSDEs, we have
	
	\begin{align}
		&\ee[\sup_{\tau\le t\le T}|\Delta X_t|^2]\notag\\
		&\qquad\le C(L,T)\big(|\Delta x|^2+\ee[\sup_{\tau\le t\le T}|\Delta \hat{X}_t|^2]+\ee\int_\tau^T|\Delta u_t|^2dt\big);\label{exi_lem2_5}\\
		&\ee\big[\sup_{\tau\le t\le T}|\Delta{p}_t|^2+\int_{\tau}^T|\Delta{q}_t|^2+|\Delta\tilde{q}_t|^2dt\big] \notag\\
		&\qquad\le C(L,T)\ee\big[\sup_{\tau\le t\le T}|(\Delta\hat{X}_t,\Delta{X}_t)|^2+\int_{\tau}^T|\Delta{u}_t|^2dt\big],\label{exi_lem2_1}
	\end{align}
	where $C(L,T)$ stands for some positive constant depending only on $L$ and $T$. 
	From \eqref{exi_lem2_5}, \eqref{exi_lem2_1} and \eqref{exi_lem2_4}, we deduce that when $\epsilon$ is small enough,
	\begin{equation}\label{exi_lem2_6}
		\ee\Big[\int_\tau^T|\Delta u_t|^2 dt\Big]\le C(L,T)(1+\frac{1}{C_f^2})\big(|\Delta x|^2+\ee[\sup_{\tau\le t\le T}|\Delta \hat{X}_t|^2]\big).
	\end{equation}
	Now we plug \eqref{exi_lem2_5} and \eqref{exi_lem2_6} into \eqref{exi_lem2_1} to get
	\begin{equation}\label{exi_lem2_a}
		\ee[|\Delta p_\tau|^2]\le C(L,T)(1+\frac{1}{C_f^2})\big(|\Delta x|^2+\ee[\sup_{\tau\le t\le T}|\Delta \hat{X}_t|^2]\big).
	\end{equation}
	
	Next, we aim to give the estimate of $\ee[\sup_{\tau\le t\le T}|\Delta \hat{X}_t|^2]$. Note that $\{(\Delta \hat{X}_t^i,\Delta \hat{p}_t^i,\Delta \hat{q}_t^i,\Delta \hat{\tilde{q}}_t^i),\tau\le t\le T\}$ satisfy the following FBSDE
	\begin{equation*}
		\left\{
		\begin{aligned}
			&d\Delta \hat{X}_t=[(b_0(t,\hat{m}^2_t)-b_0(t,\hat{m}^1_t))+b_1(t) \Delta \hat{X}_t+b_2(t) \Delta \hat{u}_t]dt\\
			&\quad\quad\quad+[(\sigma_0(t,\hat{m}^2_t)-\sigma_0(t,\hat{m}^1_t))+\sigma_1(t) \Delta \hat{X}_t+\sigma_2(t)\Delta \hat{u}_t]dW_t\\
			&\quad\quad\quad+[(\tilde{\sigma}_0(t,\hat{m}^2_t)-\tilde{\sigma}_0(t,\hat{m}^1_t))+\tilde{\sigma}_1(t) \Delta \hat{X}_t+\tilde{\sigma}_2(t)\Delta \hat{u}_t]d\tilde{W}_t,\quad t\in(\tau,T];\\
			&d\Delta \hat{p}_t=-\big[b_1(t)\Delta \hat{p}_t+\sigma_1(t)\Delta \hat{q}_t+\tilde{\sigma}_1(t)\Delta \hat{\tilde{q}}_t+(f_{0x}(t,\hat{X}^2_t,\hat{u}^2_t)-f_{0x}(t,\hat{X}^1_t,\hat{u}^1_t))\\
			&\quad\quad\quad+(f_{1x}(t,\hat{X}^2_t,\hat{m}^2_t)-f_{1x}(t,\hat{X}^1_t,\hat{m}^1_t))\big]dt+\Delta \hat{q}_tdW_t+\Delta\hat{\tilde{q}}_td\tilde{W}_t,\quad t\in[\tau,T);\\
			&\Delta \hat{X}_\tau=\Delta x, \quad \Delta p_T=g_x(\hat{X}^2_T,\hat{m}^2_T)-g_x(\hat{X}^1_T,\hat{m}^1_T),
		\end{aligned}
		\right.
	\end{equation*}
	with the condition
	\begin{equation}\label{exi_lem2_0.5'}
		b_2(t)\Delta \hat{p}_t+\sigma_2(t)\Delta\hat{q}_t+\tilde{\sigma}_2(t)\Delta\hat{\tilde{q}}_t+(f_{0u}(t,\hat{X}^2_t,\hat{u}^2_t)-f_{0u}(t,\hat{X}^1_t,\hat{u}^1_t))=0,\quad t\in[\tau,T].
	\end{equation}
	As above, by standard estimates for SDEs and BSDEs, there exist two constants $C_1>0$ and $C_2>0$ depending only on $(L,T)$, such that
	\begin{align}
		&\ee[\sup_{\tau\le t\le T}|\Delta \hat{X}_t|^2]\le C(L,T)\ee[|\Delta \eta|^2]+C_1\ee\big[\int_\tau^T|\Delta \hat{u}_t|^2dt\big];\label{exi_lem2_7}\\
		&\ee\big[\sup_{\tau\le t\le T}|\Delta\hat{p}_t|^2+\int_{\tau}^T|\Delta\hat{q}_t|^2+|\Delta\hat{\tilde{q}}_t|^2dt\big]  \le C_2\ee\big[\sup_{\tau\le t\le T}|\Delta\hat{X}_t|^2+\int_{\tau}^T|\Delta\hat{u}_t|^2dt\big].\label{exi_lem2_7.1}
	\end{align}
	From the weak monotonicity condition (H6), we have
	\begin{equation}\label{weak_monotonicity}
		\begin{split}
			&\ee[\Delta \hat{p}_T\Delta \hat{X}_T]=\ee[(g_x(\hat{X}^2_T,\hat{m}^2_T)-g_x(\hat{X}^1_T,\hat{m}^1_T))\Delta \hat{X}_T]\\
			&\qquad=\e[\e[(g_x(\hat{X}^2_T,\hat{m}^2_T)-g_x(\hat{X}^1_T,\hat{m}^1_T))\Delta \hat{X}_T|\tilde{\f}_T]|\tilde{\f}_{\tau}]\geq 0;\\
			&\ee[(f_{1x}(t,\hat{X}_t^2,\hat{m}_t^2)-f_{1x}(t,\hat{X}_t^1,\hat{m}_t^1))\Delta\hat{X}_t]\\
			&\qquad=\e[\e[(f_{1x}(t,\hat{X}_t^2,\hat{m}_t^2)-f_{1x}(t,\hat{X}_t^1,\hat{m}_t^1))\Delta\hat{X}_t|\tilde{\f}_t]|\tilde{\f}_{\tau}]\geq 0,\quad t\in[\tau,T].
		\end{split}
	\end{equation}
	Applying Itô's lemma to $\Delta\hat{p}_t \Delta\hat{X}_t$ from $\tau$ to $T$ and using the average inequality, from \eqref{exi_lem2_0.5'}, \eqref{weak_monotonicity}, the convexity condition (H4) and the Lipschitz condition (H5), we have
	\begin{equation*}
		\begin{split}
			&-\ee[\Delta \hat{p}_\tau\Delta \eta]\\
			&\le\ee\big[\int_{\tau}^T(b_0(t,\hat{m}^2_t)-b_0(t,\hat{m}^1_t))\Delta \hat{p}_t+(\sigma_0(t,\hat{m}^2_t)-\sigma_0(t,\hat{m}^1_t))\Delta \hat{q}_t\\
			&\qquad\qquad +(\tilde{\sigma}_0(t,\hat{m}^2_t)-\tilde{\sigma}_0(t,\hat{m}^1_t))\Delta \hat{\tilde{q}}_t + (b_2(t)\Delta \hat{p}_t+\sigma_2(t)\hat{q}_t+\tilde{\sigma}_2(t)\hat{\tilde{q}}_t)\Delta \hat{u}_t \\
			&\qquad\qquad-\big(f_{0x}(t,\hat{X}_t^2,\hat{u}_t^2)-f_{0x}(t,\hat{X}_t^1,\hat{u}_t^1)\big)\Delta\hat{X}_tdt\big]\\
			&\le-2C_f \ee\big[\int_{\tau}^T |\Delta \hat{u}_t|^2 dt\big]+L_m\ee\big[\int_{\tau}^T (\tilde{\e}_{t}[|\Delta \hat{X}_t|^2])^{\frac{1}{2}}(|\Delta \hat{p}_t|+|\Delta \hat{q}_t|+|\Delta\hat{\tilde{q}}_t|) dt\big]\\
			&\le -2C_f \ee\big[\int_{\tau}^T |\Delta \hat{u}_t|^2 dt\big]+\frac{TL_m}{2}\ee[\sup_{\tau\le t\le T}|\Delta \hat{X}_t|^2]\\
			&\qquad+\frac{3L_m}{2}\ee\big[\int_{\tau}^T (|\Delta \hat{p}_t|^2+|\Delta \hat{q}_t|^2+|\Delta\hat{\tilde{q}}_t|^2) dt\big].
		\end{split}
	\end{equation*}
	By applying average inequality for $\Delta\hat{p}_\tau\Delta \eta$, we have for any $\epsilon>0$,
	\begin{equation}\label{exi_lem2_7.3}
		\begin{split}
			\ee\Big[\int_\tau^T|\Delta\hat{u}_t|^2 dt\Big] \le& \frac{1}{16C_f^2\epsilon}\ee[|\Delta\eta|^2]+\frac{TL_m}{4C_f}\ee[\sup_{\tau\le t\le T}|\Delta \hat{X}_t|^2]\\
			&+(\frac{3L_m}{4C_f}+\epsilon)(T+1)\ee\big[\sup_{\tau\le t\le T}|\Delta\hat{p}_t|^2+\int_{\tau}^T|\Delta\hat{q}_t|^2+|\Delta\hat{\tilde{q}}_t|^2dt\big].
		\end{split}
	\end{equation}
	Plugging \eqref{exi_lem2_7} and \eqref{exi_lem2_7.1} into \eqref{exi_lem2_7.3} and recall that $L_m\le L$, we have
	\begin{equation*}
		\begin{split}
			&\ee\big[\int_\tau^T|\Delta\hat{u}_t|^2 dt\big]\\
			&\le C(L,T)(1+\frac{1}{\epsilon C_f^2}+\frac{1}{C_f}+\epsilon)\ee[|\Delta\eta|^2]\\
			&\qquad+(\frac{TC_1+3(T+1)(C_1+1)C_2}{4C_f}L_m+(T+1)(C_1+1)C_2\epsilon)\ee\big[\int_{\tau}^T|\Delta\hat{u}_t|^2dt\big].
		\end{split}
	\end{equation*}
	The constant
	\begin{equation*}
		\delta=2(TC_1+3(T+1)(C_1+1)C_2)^{-1}
	\end{equation*}
	depends only on $(L,T)$. If $L_mC_f^{-1}\le\delta$, we choose
	\begin{equation*}
		\epsilon=\frac{1}{4(T+1)(C_1+1)C_2}.
	\end{equation*}
	Then, we have
	\begin{equation}\label{exi_lem2_8}
		\begin{split}
			\ee\Big[\int_\tau^T|\Delta\hat{u}_t|^2 dt\Big]\le C(L,T)(1+\frac{1}{C_f}+\frac{1}{C_f^2})\ee[|\Delta\eta|^2].
		\end{split}
	\end{equation}
	Plugging \eqref{exi_lem2_8} into \eqref{exi_lem2_7}, we have
	\begin{equation}\label{exi_lem2_b}
		\ee[\sup_{\tau\le t\le T}|\Delta \hat{X}_t|^2]\le C(L,T)(1+\frac{1}{C_f}+\frac{1}{C_f^2})\ee[|\Delta \eta|^2].
	\end{equation}
	
	Now we plug \eqref{exi_lem2_b} into \eqref{exi_lem2_a}. If $L_mC_f^{-1}\le\delta$, we have 
	\begin{equation*}
		\ee[|\Delta p_\tau|^2]\le C(L,T)(1+\frac{1}{C_f})^4\big(|\Delta x|^2+\ee[|\Delta \eta|^2]\big),
	\end{equation*}
	or equivalently,
	\begin{equation*}
		|v(x_2,\eta_2)-v(x_1,\eta_1)|^2\le C(L,T)(1+\frac{1}{C_f})^4\big(|x_2-x_1|^2+\e[|\eta_2-\eta_1|^2|\tilde{\f}_\tau]\big)
	\end{equation*}
	as desired.
	\endproof


\begin{thebibliography}{aa}
		\bibitem{SA}Ahuja, S., Wellposedness of mean field games with common noise under a weak monotonicity condition, {\it SIAM J. Control Optim.}, {\bf 54}(1), 2016, 30-48.
		\bibitem{SAWR}Ahuja, S., Ren, W. and Yang, T. W., Forward-backward stochastic differential equations with monotone functionals and mean field games with common noise, {\it Stoch. Proc. Appl.}, {\bf 129}(10), 2019, 3859-3892.
		\bibitem{AB1}Bensoussan, A., Frehse, J. and Yam, P., Mean field games and mean field type control theory, New York: Springer, 2013.
		\bibitem{AB}Bensoussan, A., Sung, K. C. J., Yam, S. C. P. and Yung, S. P., Linear-quadratic mean field games, {\it J. Optimiz. Theory App.}, {\bf 169}(2), 2016, 496-529. 
		\bibitem{PC}Cardaliaguet, P., Notes on mean field games, {\it From P.-L. Lions' lectures at College de France}, 2010. 
		\bibitem{CR}Carmona, R. and Delarue, F., Forward-backward stochastic differential equations and controlled McKean-Vlasov dnamics, {\it Ann. Probab.}, {\bf 43}(5), 2015, 2647-2700. 
		\bibitem{PA}Carmona, R. and Delarue, F., Probabilistic analysis of mean-field games, \emph{SIAM J. Control Optim.}, {\bf 51}(4), 2013, 2705-2734.
		\bibitem{CR1}Carmona, R. and Delarue, F., The master equation for large population equilibriums, {Stochastic Analysis and Applications 2014}, Springer, 2014, 77-128.
		\bibitem{CR2}Carmona, R., Delarue, F. and Lachapelle, A., Control of McKean–Vlasov dynamics versus mean field games, {\it Math. Financ. Econ.}, {\bf 7}(2), 2013, 131–166.
		\bibitem{CR3}Carmona, R., Delarue, F. and Lacker, D., Mean field games with common noise, {\it Ann. Probab.}, {\bf 44}(6), 2014, 3740-3803.	
		\bibitem{CR4}Carmona, R. and Lacker, D., A probabilistic weak formulation of mean field games and applications, {\it Ann. Probab.}, {\bf 25}(3), 2015, 1189–1231.	
		\bibitem{CRXZ}Carmona, R. and Zhu, X., A probabilistic approach to mean field games with major and minor players, {\it Ann. Appl. Probab.}, {\bf 26}(3), 2016, 1535-1580. 
		\bibitem{CR5}Carmona, R., Fouque, J. P. and Sun, L. H., Mean field games and systemic risk, {\it Available at SSRN 2307814}, 2013.
		\bibitem{KD}Du, K. and Wu, Z., Linear quadratic mean-field-game of backward stochastic differential systems, {\it Math. Control Relat. F.}, {\bf 8}(3\&4), 2018, 653. 
		\bibitem{KD1}Du, K. and Wu, Z., Linear-quadratic stackelberg game for mean-field backward stochastic differential system and application, {\it Math. Probl. Eng.}, 2019.
		\bibitem{YH}Hu, Y., Huang, J. and Nie, T., Linear-quadratic-Gaussian mixed mean-field games with heterogeneous input constraints, {\it SIAM J. Control Optim.}, {\bf 56}(4), 2018, 2835–2877.
		\bibitem{YH1}Hu, Y. and Peng, S., Maximum principle for semilinear stochastic evolution control systems, {\it Stochastics}, {\bf 33}(3), 1990, 159–180. 
		\bibitem{YH2}Hu, Y. and Peng, S., Solution of forward-backward stochastic differential equations, {\it Probab. Theory Rel.}, {\bf 103}(2), 1995, 273-283. 
		\bibitem{MH}Huang, M., Caines, P. E. and Malhamé, R. P., Large-population cost-coupled lqg problems with nonuniform agents: Individual-mass behavior and decentralized $\epsilon$-nash equilibria, {\it IEEE T. Automat. Contr.}, {\bf 52}(9), 2017, 1560–1571. 
		\bibitem{MH1}Huang, M., Malhamé, R. P. and Caines, P. E., Large population stochastic dynamic games: closed-loop McKean–Vlasov systems and the Nash certainty equivalence principle, {\it Communications in Information and Systems}, {\bf 6}(3), 2006, 221–252. 
		\bibitem{I}Karatzas, I. and Shreve, S. E., {Brownian motion and stochastic calculus}, V. 113, springer, 2014.
		\bibitem{VN}Kolokoltsov, V. N., Li, J. and Yang, W., Mean field games and nonlinear Markov processes, \emph{Mathematics}, 2011.
		\bibitem{DL}Lacker, D., A general characterization of the mean field limit for stochastic differential games, {\it Probab. Theory Rel.}, {\bf 165}(3), 2016, 581-648.
		\bibitem{JM1}Lasry, J.-M. and Lions, P.-L., Jeux à champ moyen. i–le cas stationnaire, {\it C.R.A.S. Math.}, {\bf 343}(9), 2006, 619–625.
		\bibitem{JM2}Lasry, J.-M. and Lions, P.-L., Jeux à champ moyen. ii–horizon fini et contrôle optimal, {\it C.R.A.S. Math.}, {\bf 343}(10), 2006, 679–684.
		\bibitem{JM3}Lasry, J.-M. and Lions, P.-L., Mean field games, {\it Jpn. J. Math.}, {\bf 2}(1), 2007, 229–260.
		\bibitem{JMa}Ma, J. and Yong, J., {Forward-backward stochastic differential equations and their applications}, Springer Science \& Business Media, 1999.
		\bibitem{NM}Nourian, M. and Caines, P. E., $\epsilon$-nash mean field game theory for nonlinear stochastic dynamical systems with major and minor agents, {\it SIAM J. Control Optim.}, {\bf 51}(4), 2012, 2090-2095.
		\bibitem{SP}Peng, S. and Wu, Z., Fully coupled forward-backward stochastic differential equations and applications to optimal control, {\it SIAM J. Control Optim.}, {\bf 37}(3), 1999, 825-843.	
		\bibitem{HP}Pham, H., {Continuous-time stochastic control and optimization with financial applications}, V. 61, Springer, 2009.
		\bibitem{HP1}Pham, H., Linear quadratic optimal control of conditional mckean-vlasov equation with random coefficients and applications, {\it Probability, Uncertainty and Quantitative Risk}, {\bf 1}(1), 2016, 7.
		\bibitem{HP2}Pham, H. and Wei, X., Bellman equation and viscosity solutions for mean-field stochastic control problem, {\it Mathematics}, {\bf 47}(1), 2015, 444-475.
		\bibitem{HP3}Pham, H. and Wei, X., Dynamic programming for optimal control of stochastic McKean-Vlasov dynamics, {\it SIAM J. Control Optim.}, {\bf 55}(2), 2017, 1069-1101.
		\bibitem{AS}Sznitman, A.-S., Topics in propagation of chaos, Springer Berlin Heidelberg, 1991.
		\bibitem{JYXY}Yong, J. and Zhou, X. Y., {Stochastic controls: Hamiltonian systems and HJB equations}, V. 43, Springer Science \& Business Media, 1999.
		\bibitem{JY1}Yong, J. and Zhou, X., Stochastic controls : Hamiltonian systems and HJB equations, {\it IEEE T. Automat. Contr.}, {\bf 46}(11), 1999, 1846–1846. 
		\bibitem{JY}Yu, J. and Tang, S., Mean-field game with degenerate state- and distribution-dependent noise: a probabilistic approach, {\it Chinese Ann.
			Math.}, {\bf 2A}(1), 2019, 1-21.
	\end{thebibliography}
\end{document}